\documentclass[12pt]{article}
\usepackage[english]{babel}
\usepackage{amsmath,latexsym,amsthm,amsfonts}
\usepackage{amssymb,amscd}
\usepackage{amscd}
\usepackage{latexsym}
\usepackage{cite}

\usepackage{ifthen}
\usepackage[usenames]{color}

\numberwithin{equation}{section}

\DeclareMathOperator{\abs}{abs} \DeclareMathOperator{\Dif}{d}
\DeclareMathOperator{\const}{const}

\newcounter{Draft}
\setcounter{Draft}{0}

\usepackage{longtable}
\usepackage{fancyhdr}
\usepackage{amssymb}
\usepackage{multicol}
\usepackage{hyperref}

\usepackage[all]{xy}

\usepackage{graphicx}

\textheight=22true cm \textwidth=17true cm \topmargin=-10true mm
\oddsidemargin -5true mm

\renewcommand{\leq}{\leqslant}
\renewcommand{\geq}{\geqslant}

\newtheorem{rem}{Remark}
\newtheorem{example}{Example}
\newtheorem{fact}{Fact}
\newtheorem*{fact*}{Fact}

\newcounter{IsEquat}
\setcounter{IsEquat}{0}

\AtBeginDocument{%

\let\origlabel\label

\renewcommand*{\label}[1]{%
\ifthenelse{\theDraft=1} { \ifthenelse{\theIsEquat=1}{
\underline{\textbf{\textcolor{red}{\ensuremath{#1}:}}} \nonumber
\\ } { \textcolor{blue}{label: }\textcolor{red}{\ensuremath{#1}:}
}\origlabel{#1} %
}{\origlabel{#1}}} }

\AtBeginDocument{%
\let\origBegin\begin%
\renewcommand*{\begin}[1]{%
\ifthenelse{\theDraft=1}%
{\ifthenelse{\equal{#1}{equation}}{\setcounter{IsEquat}{1}
\origBegin{eqnarray} \text{\textcolor{blue}{Formula: }} 
}{\origBegin{#1}}}{\origBegin{#1}} } }

\AtBeginDocument{%
\let\origEnd\end%
\renewcommand*{\end}[1]{%
\ifthenelse{\theDraft=1}%
{\ifthenelse{\equal{#1}{equation}}{\setcounter{IsEquat}{0}
\origEnd{eqnarray} }{\origEnd{#1}} }{\origEnd{#1}} } }


\title{Kepler's laws}
\author{M. Plakhotnyk}
\usepackage{fancyhdr}
\rfoot{} \cfoot{\underline{\textbf{M. Plakhotnyk: ``Kepler's laws
with introduction to differential calculus''}}} \lfoot{}

\fancyhead[LE,RO]{\thepage} %
\pagestyle{fancy}

\begin{document}
\maketitle

\begin{center}

\thispagestyle{empty}
{\LARGE Kepler's laws}\\
{\large with introduction to differential calculus}\\
{\large (book for scholars, who are interested in physics and
mathematics)}\\
{\Large prepared: M. Plakhotnyk}\\

{\Large makar.plakhotnyk@gmail.com}\\

\end{center}

\thispagestyle{empty}

\begin{abstract}
We explain the solution of the following two problems: obtaining
of Kepler's laws from Newton's laws (so called two bodies problem)
and obtaining the fourth Newton's law (the formula for
gravitation) as a corollary of Kepler's laws.

This small book is devoted to the scholars, who are interested in
physics and mathematics. We also make a series of digressions,
where explain some technique of the higher mathematics, which are
used in the proofs.
\end{abstract}

\newpage

\section{Introduction}

Both Newton's laws and Kepler's laws are classical facts, known
from the school course of physics. One of the beautiful facts in
physics is that Kepler's laws are equivalent to the Newton's law
of the gravitation.\\

Remind the formulation of Kepler's laws.\\
\textbf{Kepler's first law}: The orbit of a planet is an ellipse
with the Sun at one of the two foci.

\textbf{Kepler's second law}: A line segment joining a planet and
the Sun sweeps out equal areas during equal intervals of time.

\textbf{Kepler's third law}: The square of the orbital period of a
planet is proportional to the cube of the semi-major axis of its
orbit.\\

Remind the formulation of Newton's laws:\\
\textbf{Newton's first law}: if the vector sum of all forces
acting on an object is zero, then the velocity of the object is
constant.

\textbf{Newton's second law}: the constant force
$\overrightarrow{F}$, which acts on the body of the mass $m$,
produces the constant acceleration $\overrightarrow{a} =
\frac{\overrightarrow{F}}{m}$ in the direction of
$\overrightarrow{F}$.

\textbf{Newton's third law}:  all forces between two objects exist
in equal magnitude and opposite direction: if one object $A$
exerts a force $F_A$ on a second object $B$, then $B$
simultaneously exerts a force $F_B$ on $A$, and the two forces are
equal in magnitude and opposite in direction: $F_A = -F_B$.

\textbf{Newton's fourth law}: every point mass attracts every
single other other point mass by a force pointing along the line
intersecting both points. The force is proportional of the two
masses and inversely proportional to the square of the distance
between
them.\\

Notice, that 1-th and 3-rd Newton's law are technical. They are
formulated in the style of Euclidean axioms and thus, it is
impossible to try follow them from some complicated facts. We
consider the second Newton's law as the definition of the force,
considering the acceleration as the mathematical notion of the
second derivative.

Thus, the equivalence of the fourth Newton's law and Kepler's laws
is the unique non-trivial fact, which may exist as mathematical
theorem and it is.

The proof of the mentioned equivalence is not elementary. It uses
derivatives, integration, some basis of vector analysis, polar
coordinates. In this book we explain these notions as detailed, as
it is possible in the small book for scholars and our the main
deal is to show young people the powerfulness of mathematics and
inspirit them to study mathematics in more detailed, using other
books, dedicated not especially to Kepler's laws, or the
connection of mathematics and physics.

Moreover, we give in Section~\ref{sect-10} some non-standard
proves of two classical facts. In Section~\ref{sect-10a} we proof
``from physical reasonings'' the formulas for the derivative of
functions $\sin x$ and $\cos x$. In Section~\ref{sect-10b} we
prove ``by trivial reasonings'' the formula for the radius of the
curvature of the plain curve.

\newpage

\section{Derivatives and integrals}

\subsection{Physical approach}

\subsubsection{Derivatives}

The use of derivatives (precisely the invention of derivatives) in
the 17-th century completely transformed physics, because opened
the possibility for the complicated mathematically strict
calculations as the solutions of the physical (mechanical)
problems.

Kepler's laws were one of the first illustration of the
powerfulness of the new mathematical (and physical) notion of
derivative. From another hand, the derivative is not more then the
mathematical formalization of notion of velocity.

Suppose that a point $A$ moves on the line (say $X$-axis) and
denote $x=s(t)$ the position function of $A$, i.e. $s(t)$ is the
coordinate of $A$ at the time $t$. Then the classical notion of
the instantaneous velocity of $A$ will be some function $v(t)$,
dependent on $t$. The mathematical formalization of the
instantaneous velocity is exactly the derivative of the function
$s$ and it is traditionally denoted $v = s'$.

Fix moment of time $t_0$. The velocity of $A$ at this moment (the
instantaneous velocity) can be found as
\begin{equation}\label{eq-22} s'(t_0) \approx \frac{s(t_0+\Delta
t) -s(t_0)}{\Delta t},
\end{equation} where $\Delta t$ is ``small enough'', i.e. $\Delta
t\approx 0$, but $\Delta t \neq 0$. Moreover, the symbol $\approx$
is used in~\eqref{eq-22} only to notice that $\Delta t$ is not an
exact number, but is ``very small, but non-zero''. In fact, the
formula~\eqref{eq-22} defines some certain function, which is as
certain, as clear as the physical notion of instantaneous
velocity. Suppose that $dt$ is so small value of $\Delta t$,
that~\eqref{eq-22} can be considered as equality
\begin{equation}\label{eq-23}s'(t_0) = \frac{s(t_0 +dt) -
s(t_0)}{dt},
\end{equation} i.e. we can ignore the absolute value %
$\left|s'(t_0) - \frac{s(t_0 +dt) - s(t_0)}{dt}\right|$ assuming
that it is zero. Now denote $$ds(t_0) = s(t_0 +dt) - s(t_0)$$ and
rewrite~\eqref{eq-23} as \begin{equation}\label{eq-24}s'(t_0) =
\frac{ds(t_0)}{dt}.
\end{equation} %
Traditionally, people do not write $t_0$ in~\eqref{eq-24}, i.e.
left just \begin{equation}\label{eq-25}s' = \frac{ds}{dt},
\end{equation} %
but understand~\eqref{eq-25} with all the remarks before it.
Notice ones more, that $s$ and $s'$ in~\eqref{eq-25} are some
functions, and $t$ is their argument. Moreover, mathematics has a
powerful techniques how to find derivative $s'$ by given $s$.

\begin{example}\label{example-01}
Find the derivative of the function $s(t)=t^2$.
\end{example}

Plug $s(t)=t^2$ into~(\ref{eq-22}) and get $$ \frac{s(t+h)
-s(t)}{h} = \frac{(t+h)^2 - t^2}{h} = \frac{(t^2 +2th + h^2) -
t^2}{h} = 2t +h.
$$ Evidently, if $h\approx 0$, then $2t +h \approx 2t$, whence
$$
s'(t) =2t.
$$

We will need the following properties of the derivative for our
further reasonings.

1. For arbitrary function $f(x)$ and arbitrary constant $a$ the
formula $(af(x))' = a\cdot f'(x)$ holds. This formula follows
from~(\ref{eq-23}) as the definition of the derivative.

2. Let $f(x)$ and $g(x)$ be arbitrary functions. Denote $h(x) =
f(g(x))$. Then the derivative $h'(x)$ equals
\begin{equation}\label{eq-35}h'(x) = [f(g(x))]' = f'(g(x))\cdot g'(x).
\end{equation}

We will give at first an example of the use of~\eqref{eq-35} and
follow it by the proof.

\begin{example}
Find the derivative of $p(t)=t^4$.
\end{example}

Denote $s(t)=t^2$ and $p(t)=t^4$. We have obtained in
Example~\ref{example-01} that $s'(t)=2t$. Clearly, $p(t)=s(s(t))$.
For make our reasonings more clear, denote $f(t)=g(t)=t^2$ whence
$p(t)=f(g(t))$. The expression $f'(g(x))$ means that function $g$
should be plugged into $f'$, whence $f'(g(t)) = 2\cdot g(t) =
2t^2$. Thus, by~\eqref{eq-35}, $p'(t) = 2t^2\cdot 2t = 4t^3$.\\

\begin{proof}[Proof of~\eqref{eq-35}]
The derivative of $h$, which is written the by definition $$ h'(x)
= \frac{h(x+t) -h(x)}{t},\ t  \approx 0,
$$ can be rewritten as follows.%
$$\frac{h(x+t) -h(x)}{t} = \frac{f(g(x+t))
-f(g(x))}{g(x+t)-g(x)}\cdot \frac{g((x+t)-g(x)}{t},\, t  \approx
0. $$ %
The first fraction here is $f'$, where $g$ is plugged. The second
multiplier is $g'(x)$, whence~\eqref{eq-35} is proved.
\end{proof}

Also derivative $s'(t)$ is the tangent of the angle between
$x$-axis and the tangent-line for the graph $s(t)$ at the point,
corresponding to $t$. Indeed, denote $A(t,\, s(t))$ and
$B(t+\Delta t,\, s(t+\Delta t))$. Then the tangent of the line
$AB$ (which equals to the tangent of the angle $x$-axis and $AB$)
equals to the expression from formula~\eqref{eq-22}. If $\Delta t
\approx 0$, then $AB$ transforms to the tangent and~\eqref{eq-22}
becomes the derivative.

There are known a lot of methods of finding the derivatives in
mathematics. Precisely, derivatives of all the functions, which
are studied at school (i.e. $y=x^n$, $y=a^x$, $y=\sin x$, $y=\cos
x$, $y=\tan x$) can be found explicitly.

\subsubsection{Conservation laws}

Consider the following classical physical problem. Suppose that a
point of the mass $m$ moves with the velocity $v$ and constant
force $F$ acts on the point in the direction of the movement.

The following physical quantities are know in this case:

1. The kinetic energy of the point is $\frac{mv^2}{2}$.

2. The impulse of the point is $mv$.

3. If the force acts during the distance $l$, then the work of the
force is $Fl$.

4. If the force acts during the time $t$, then the impulse of the
force is $Ft$.\\

Moreover, is we ignore other factors, which cause the movement of
the body, then impulse of the force transforms to the impulse of
the body and also the work of the force transforms to the kinetic
energy of the body. In other words, the equality
\begin{equation}\label{eq-36}m\Delta v = F\Delta t
\end{equation} and \begin{equation}\label{eq-37}
\frac{m(\Delta v)^2}{2} = F\Delta l
\end{equation} hold. The equality~\eqref{eq-36} is called the
conservation of the impulse law and~\eqref{eq-37} is conservation
of the energy law.

School course of physics does not contain the clear explanation of
the notions of energy (kinetic energy) and impulse. In fact,
laws~\eqref{eq-36} and~\eqref{eq-37} are considered like
definition of these two notions and it is frequently said
that~\eqref{eq-36} and~\eqref{eq-37} are obtained
``experimentally''.

The same situation is about the notion of the ``force''. By second
Newton's law, the constant force $F$ causes the acceleration $a$
with the same direction as $F$ and such that
\begin{equation}\label{eq-38}F = am.\end{equation}
The precisely, formula~\eqref{eq-38} is not more than definition
of the force. If there is an acceleration, then~\eqref{eq-38}
defines something and this ``something'' is called force.
Moreover, the existence of the force should cause the
acceleration, again by~\eqref{eq-38}.

The acceleration is the derivative of the velocity, or, which is
the same, the ``second derivative'' of the position of the point
(i.e. the derivative of the derivative). It is better here to
understand the derivative just as ``new function, which is
constructed by some certain rulers from the former function''. In
any case, say that
\begin{equation}\label{eq-39}\left\{\begin{array}{l}
a = v'\, ;\\ a = s''\, .
\end{array}\right.\end{equation}

\begin{example}\label{example-02}
Second Newton's law implies the law of conversation of the
impulse.
\end{example}

Rewrite the 2-nd Newton's law~\eqref{eq-38} as $m\, \frac{dv}{dt}
= F,$ which is equivalent to $$m\, dv = F\, dt. $$ %
and is exactly the same as~\eqref{eq-36}.

\begin{rem}\label{rem-1}
Notice, that reasonings from the Example~\ref{example-02} can be
inverted and, thus, the following proposition can be obtained:
Suppose that material point moves on a line an the formula $mv =
Ft $ holds, where $F$ is some constant. Then this point moves with
constant acceleration $a = \frac{F}{m}$.
\end{rem}

\begin{example}
Suppose that material point moves on a line an the
formula~\eqref{eq-37} holds, where $F$ is some constant. Then this
point moves with constant acceleration, which can be found
by~\eqref{eq-38}.
\end{example}

\begin{proof}
Suppose that at some (former) moment of time the velocity and the
length equal zero, i.e. $v(0)=l(0)=0$. Then we can
rewrite~\eqref{eq-37} as $$ \frac{mv^2}{2} = Fl.
$$
Since left hand side and right hand side of the obtained equality
are equal, then their derivatives are equal too, i.e.
$$
\frac{m}{2}\cdot \, 2\, v\, v' = F\, l'.
$$ But $v'=a$ and %
$l'=v$, whence $mva = Fv,$ and, after cancellation, %
we get~\eqref{eq-38}.
\end{proof}

\begin{rem}\label{rem-2}
Notice, that reasonings from the Example~\ref{example-02} can be
inverted and, thus, the following proposition can be obtained:
Suppose that material point moves on a line an the formula
$\frac{mv^2}{2} = Fl $ holds, where $F$ is some constant. Then
this point moves with constant acceleration $a = \frac{F}{m}$.
\end{rem}

\subsection{Mathematical approach}

\subsubsection{Derivatives}

If a function $s=s(t)$ is given by quite simple formula, then
there are mathematical methods, which give us quite simple
formulas for the derivative $v= s'(t)$.

Suppose now, that we have a material point, which moves not just
on a line, but in a 3-dimensional space. It means that the
function $s =s(t)$ should be consisted of three functions, write
\begin{equation}\label{eq-01} s(t) = (x(t),\, y(t),\, z(t)).
\end{equation}
The derivative $s'$ of $s$ of the form~\eqref{eq-01} also is
defined by formulas~\eqref{eq-22}, \eqref{eq-23} and~\eqref{eq-24}
with correspond remarks, but now $s$ is a vector-function there,
whence the differences $s(t_0+\Delta t) -s(t_0)$, ${s(t_0+dt)
-s(t_0)}$ and $ds(t_0)$ are differences of vectors and, thus,
vectors.

\begin{fact}\label{fact-01}
If the function is given by~\eqref{eq-01}, then its derivative
equals $$ s'(t) = (x'(t),\, y'(t),\, z'(t)).
$$
\end{fact}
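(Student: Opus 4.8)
The plan is to reduce the statement about the vector-function derivative to the already-understood scalar case by applying the definition of the derivative componentwise. The key observation is that formula~\eqref{eq-22}, which defines the derivative, makes perfect sense for a vector-function $s(t)=(x(t),\,y(t),\,z(t))$, since vector subtraction and division by the scalar $\Delta t$ are both defined coordinatewise.

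First I would write out the difference quotient that appears in~\eqref{eq-22} for the vector-function $s$:
\begin{equation}\label{eq-fact01-diff}
\frac{s(t+\Delta t)-s(t)}{\Delta t}
=\frac{\bigl(x(t+\Delta t),\,y(t+\Delta t),\,z(t+\Delta t)\bigr)-\bigl(x(t),\,y(t),\,z(t)\bigr)}{\Delta t}.
\end{equation}
Next I would use the coordinatewise rules for subtracting vectors and for multiplying a vector by the scalar $\frac{1}{\Delta t}$ to rewrite the right-hand side as the single vector whose entries are the three scalar difference quotients:
\begin{equation}\label{eq-fact01-split}
\frac{s(t+\Delta t)-s(t)}{\Delta t}
=\left(\frac{x(t+\Delta t)-x(t)}{\Delta t},\ \frac{y(t+\Delta t)-y(t)}{\Delta t},\ \frac{z(t+\Delta t)-z(t)}{\Delta t}\right).
\end{equation}
Finally, taking $\Delta t\approx 0$ as in~\eqref{eq-22}, each scalar entry on the right of~\eqref{eq-fact01-split} becomes, by the scalar definition of the derivative, exactly $x'(t)$, $y'(t)$, $z'(t)$ respectively, so the whole vector becomes $\bigl(x'(t),\,y'(t),\,z'(t)\bigr)$, which is the asserted formula.

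The main conceptual point — and the only place requiring care — is the passage from~\eqref{eq-fact01-diff} to~\eqref{eq-fact01-split}, that is, the fact that both vector subtraction and scalar division act on each coordinate separately, so that taking the limit (the $\Delta t\approx 0$ step) of a vector amounts to taking the limit in each coordinate independently. At the level of rigor used in this book, this is immediate from the definition of these vector operations, so there is no real obstacle; the whole proof is simply the remark that the componentwise structure of the vector operations lets the scalar derivative machinery act three times in parallel.
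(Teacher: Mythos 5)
Your proof is correct and is exactly the intended argument: the paper states Fact~\ref{fact-01} without proof, merely remarking that the derivative of the vector-function is defined by the same formulas~\eqref{eq-22}--\eqref{eq-24} with vector differences, and your componentwise reading of the difference quotient is the natural (and only) step needed to pass from that remark to the stated formula. Nothing is missing at the level of rigor the book works at.
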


\begin{fact}\label{fact-09}
If~\eqref{eq-01} defines the position function of a point, then
the derivative $v(t) =s'(t)$ defines the velocity (which is,
clearly, a vector) and the derivative $a(t) = v'(t) = (s'(t))'$
defines the acceleration (which is a vector too). Notice, that in
this case $(s'(t))'$ is called \textbf{second derivative} and also
can be denoted as $s''(t)$ which means the same as $(s'(t))'$,
being the derivative of the derivative.
\end{fact}

\begin{fact}\label{fact-10}
If $g,\, h$ are functions $\mathbb{R}\rightarrow \mathbb{R}$, then
the derivative of the function $f(x) = g(h(x))$ can be found as $$
f'(x) = g'(t)\cdot h'(x),
$$ plugging $t = h(x)$. This fact is called \textbf{the chain rule} of the
differentiation.
\end{fact}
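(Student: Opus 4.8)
The plan is to reproduce, in the notation of Fact~\ref{fact-10}, the same multiply-and-divide trick already used to establish~\eqref{eq-35}; indeed Fact~\ref{fact-10} is nothing but a restatement of~\eqref{eq-35} with the two functions renamed. First I would write $f'(x)$ straight from the definition~\eqref{eq-23} of the derivative, so that for a value $\delta \approx 0$ with $\delta \neq 0$ one has
$$
f'(x) = \frac{f(x+\delta) - f(x)}{\delta} = \frac{g(h(x+\delta)) - g(h(x))}{\delta}.
$$

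Next I would insert the increment of the inner function into both numerator and denominator. Multiplying and dividing by $h(x+\delta) - h(x)$ gives
$$
f'(x) = \frac{g(h(x+\delta)) - g(h(x))}{h(x+\delta) - h(x)} \cdot \frac{h(x+\delta) - h(x)}{\delta}.
$$
The second factor is, by definition, exactly $h'(x)$. For the first factor I would set $t = h(x)$ and note that when $\delta \approx 0$ the increment $h(x+\delta) - h(x)$ is itself $\approx 0$; treating this increment as the small increment of the argument of $g$ at the point $t$, the first factor is precisely the difference quotient of $g$ at $t$, that is $g'(t)$. Multiplying the two factors yields $f'(x) = g'(t)\cdot h'(x)$ with $t = h(x)$, which is the claim.

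The step I expect to be the main obstacle — and the one glossed over by the informal $\approx 0$ calculus adopted in this book — is the multiply-and-divide move itself, since it is legitimate only when the inner increment $h(x+\delta) - h(x)$ is nonzero. In keeping with the level of the rest of the text I would dispose of this by the same convention used throughout: a genuinely moving inner function has nonzero increment for nonzero $\delta$, and the degenerate case, where $h$ is momentarily constant so that both sides vanish, is checked separately. A fully rigorous argument would instead replace the troublesome first quotient by a function of $\delta$ that is extended continuously across the values where the increment is zero, but that refinement lies beyond the elementary approach taken here and need not be carried out.
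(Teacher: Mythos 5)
Your proposal is correct and follows essentially the same route as the paper, which proves Fact~\ref{fact-10} in the guise of~\eqref{eq-35}: write the difference quotient from the definition and multiply-and-divide by the inner increment, reading off the two factors as $g'$ at $h(x)$ and $h'(x)$. Your closing remark about the vanishing inner increment is in fact more careful than the paper, which applies the trick without comment.
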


\begin{fact}\label{fact-11}
If $g,\, h$ are functions $\mathbb{R}\rightarrow \mathbb{R}$, then
the derivative of the function $f(x) = g(x)\cdot h(x)$ can be
found as
$$ f'(x) = g'(x)\cdot h(x) + g(x)\cdot h'(x).
$$ This fact is called \textbf{the product rule} of the
differentiation.
\end{fact}

\begin{fact}\label{fact-12}
\textbf{Table of derivatives}: \\
$\sin'x = \cos x$ and $\cos'x = -\sin x$.
\end{fact}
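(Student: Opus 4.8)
The plan is to prove both formulas at once by exploiting uniform circular motion, which fits the physical spirit of this book. Consider a point moving along the unit circle with constant unit angular speed, so that its position function is $s(t) = (\cos t,\, \sin t)$. By Fact~\ref{fact-01} the velocity is $v(t) = s'(t) = (\cos' t,\, \sin' t)$, so it suffices to identify this velocity vector geometrically and then read off its two coordinates.

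First I would argue that the speed is constant and equal to $1$: the point traverses the unit circle, of circumference $2\pi$, in time $2\pi$, so the distance travelled per unit time is $1$, whence $|v(t)| = 1$. Next I would use that the velocity vector is always tangent to the trajectory and therefore perpendicular to the radius vector $s(t)$; this holds because the point stays on a circle of fixed radius, so $s(t)$ can neither lengthen nor shorten. A unit vector perpendicular to $(\cos t,\, \sin t)$ is obtained by rotating it by $90^\circ$, giving $(-\sin t,\, \cos t)$, the sign being fixed by the counterclockwise direction of motion. Comparing coordinates with $v(t) = (\cos' t,\, \sin' t)$ then yields $\cos' t = -\sin t$ and $\sin' t = \cos t$ simultaneously.

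As a purely analytic alternative I could instead start from the difference quotient~\eqref{eq-22} and apply the angle-addition formula $\sin(t+h) = \sin t\cos h + \cos t\sin h$, which rearranges the quotient into $\sin t\cdot \frac{\cos h - 1}{h} + \cos t\cdot \frac{\sin h}{h}$, with the analogous expression for cosine. The whole computation then collapses onto the two basic limits $\frac{\sin h}{h}\to 1$ and $\frac{\cos h - 1}{h}\to 0$ as $h\approx 0$.

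The main obstacle is justifying these limiting statements rigorously, or, in the physical proof, justifying that the speed is exactly $1$ and that the velocity is exactly perpendicular to the radius. The delicate point is the limit $\frac{\sin h}{h}\to 1$: naively setting $h = 0$ produces the indeterminate form $\frac{0}{0}$, so I would need a geometric squeeze argument comparing the area, or the arc length, of a circular sector with the inscribed and circumscribed triangles, namely $\sin h \leq h \leq \tan h$ for small positive $h$, and then dividing through. Everything else reduces to routine algebra.
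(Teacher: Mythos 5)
Your main argument is essentially identical to the paper's own proof in Section~\ref{sect-10a}: uniform counterclockwise motion on the unit circle, constant speed computed from circumference over period, velocity perpendicular to the radius vector, and the sign fixed by the direction of motion — the only cosmetic difference is that you fix unit angular speed from the start, while the paper works with general $\omega$ and then sets $\omega = 1$. The analytic alternative you sketch (difference quotient, angle-addition, and the limits $\frac{\sin h}{h}\to 1$, $\frac{\cos h - 1}{h}\to 0$) is a valid backup but is not the paper's route, and your own caveat about justifying tangency and constant speed applies equally to the paper's argument, which likewise takes these physical facts as given.
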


\begin{fact}\label{fact-05}
The derivative of the function $s(t)=\frac{1}{t}$ is $$ s'(t) =
\frac{-1}{t^2}.
$$
\end{fact}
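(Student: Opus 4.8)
The statement to prove is Fact~\ref{fact-05}: the derivative of $s(t) = \frac{1}{t}$ equals $\frac{-1}{t^2}$.

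The plan is to apply the limit definition of the derivative directly, exactly as done in Example~\ref{example-01} for $t^2$. First I would plug $s(t) = \frac{1}{t}$ into the difference quotient from~\eqref{eq-22}, forming
$$
\frac{s(t+h) - s(t)}{h} = \frac{\frac{1}{t+h} - \frac{1}{t}}{h},
$$
where $h \approx 0$ but $h \neq 0$. The main algebraic work is to combine the two fractions in the numerator over a common denominator.

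Next I would simplify: the numerator $\frac{1}{t+h} - \frac{1}{t}$ becomes $\frac{t - (t+h)}{t(t+h)} = \frac{-h}{t(t+h)}$. Dividing this by $h$ (equivalently, multiplying by $\frac{1}{h}$) cancels the factor $h$, yielding
$$
\frac{s(t+h) - s(t)}{h} = \frac{-1}{t(t+h)}.
$$
Finally, since $h \approx 0$, the term $t+h \approx t$, so the expression approaches $\frac{-1}{t \cdot t} = \frac{-1}{t^2}$, which is the claimed formula.

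I do not expect any genuine obstacle here; the entire argument is a short routine computation in the same elementary style the paper uses for $s(t) = t^2$. The only point that requires a little care is the cancellation step, where one must track the sign coming from $t - (t+h) = -h$ and ensure the single surviving factor of $h$ in numerator and denominator is removed correctly before passing $h$ to $0$. Alternatively, one could note that $\frac{1}{t} = t^{-1}$ and invoke a power rule, but since the paper's table of derivatives (Fact~\ref{fact-12}) does not yet include the general power rule for negative exponents, the direct difference-quotient computation is the cleanest self-contained route consistent with the methods already established in the excerpt.
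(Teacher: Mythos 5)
Your computation is correct, and it is carried out precisely in the style the paper itself establishes in Example~\ref{example-01}: plug into the difference quotient~\eqref{eq-22}, simplify algebraically so the factor of $h$ cancels, then pass $h\approx 0$ to the limit value. Note, however, that the paper never actually proves Fact~\ref{fact-05} --- it is stated as one of the unproved facts, and the subsection on derivatives in Section~\ref{sect-proves} contains no argument for it --- so your proposal does not diverge from the paper's proof; rather, it supplies a proof where the paper omits one, and the sign bookkeeping via $t-(t+h)=-h$ that you flag as the only delicate step is handled correctly.
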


\subsubsection{Integrals}

The derivative, which we have mentioned above, can be considered
now just as some rule of finding the new function $g = f'$ by the
given function $f$.

The converse problem also can be considered: by the given function
$g$ find the function $f$ such that $g = f'$. In this case the
function $g$ is called the \textbf{indefinite integral} of the
function $f$. The nature of this name will become evident after we
will understand the following fact.

Let a function $y=g(x):\, \mathbb{R}\rightarrow \mathbb{R}$ such
that $g(x)>0$ for all $x$ be given and let the graph of $g$ be
constructed. Fix any point $a\in \mathbb{R}$ and denote by
$A(t),\, t\geq a$ the area between the graph of $y=g(x)$ and
$X$-axis for $x\in [a,\, t]$ (see Fig.~\ref{fig-13}).

\begin{figure}[h]
\center{\includegraphics[width=8cm]{./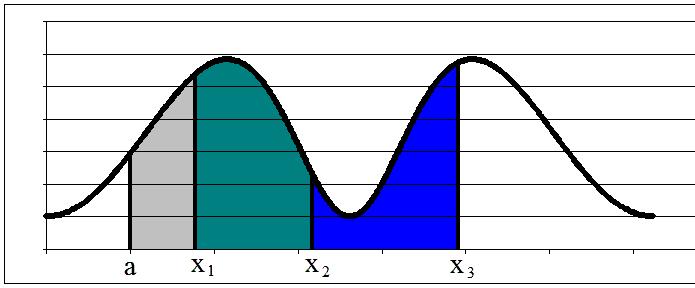}}
\caption{}\label{fig-13}
\end{figure}

It is easy to prove that $A'(a) = g(a)$. It follows from the
arbitrariness of $a$ that $A'(x) = g(x)$ for all $x\in
\mathbb{R}$. Suppose that (for the same $g$) the function $f$ is
such that $f' = g$. That $A' -f' =0$, which is the same as
$(A-f)'(x)=0$ for all $x\in \mathbb{R}$. In is easy to see that in
this case there exists a constant $c\in \mathbb{R}$ such that
\begin{equation}\label{eq-40}
f(x) = A(x) +c\end{equation} for all $x\in \mathbb{R}$,
whence~\eqref{eq-40} is the complete description of all the
functions $f$ such that $f'=g$. This fact is called the
\textbf{Newton-Leibnitz Theorem}.

From another hand, since the function $A(x)$  is defined as some
area, we can write it in ``a bit strange manner'', using the
function $g$, which in used for this area. Let $a\in \mathbb{R}$
as above and $x>a$ be fixed. Then geometrical figure (whose are we
are calculating to obtain $A(x)$) may be considered as consisted
of many-many rectangles of the small with $\Delta x$ (which will
become $\Dif x$ later) and of the height $g(s)$ (for each $s\in
[a,\, x]$). Denote $n = \frac{x-a}{\Delta x}$. Then the area
$A(x)$ can be calculated \begin{equation}\label{eq-44}A(x) \approx
\sum\limits_{k=0}^ng(a+k\Delta x)\Delta x.
\end{equation}

\begin{figure}[h]
\center{\includegraphics[width=8cm]{./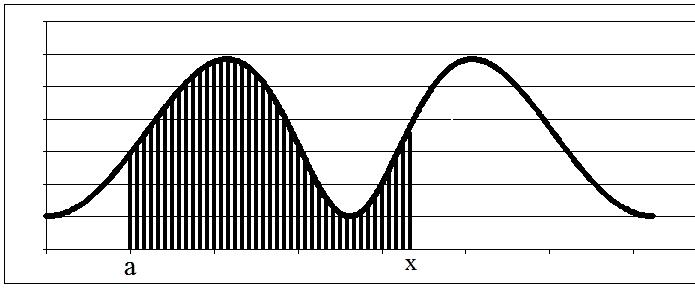}}
\caption{}\label{fig-14}
\end{figure}

When we decrease $\Delta x$ (in fact, change $\Delta x$ to the
differential $\Dif x$), then the limit value of the
expression~\eqref{eq-44} is denoted by
\begin{equation}\label{eq-45}A(x) = \int\limits_a^xg(s)\, \Dif s.
\end{equation} The expression~\eqref{eq-45} is called the \textbf{definite
integral} (of the function $g$ with boundaries $a$ and $x$). The
word ``integral'' here means the collection of ``infinitely many
rectangles of infinitely small width'' such that the sum of the
areal of these rectangles equals to the are $A(x)$.

\begin{figure}[h]
\center{\includegraphics[width=8cm]{./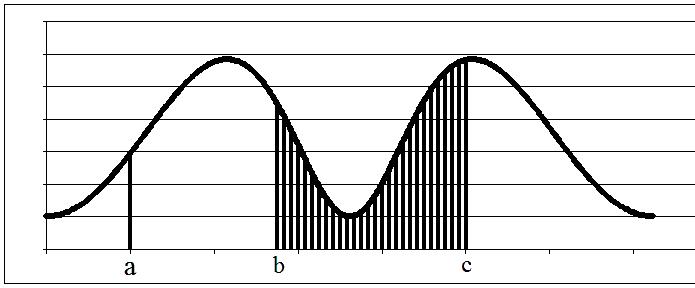}}
\caption{}\label{fig-15}
\end{figure}

For example, if we need to find the area ``between real points''
$b$ and $c$ (i.e. the ares, which is denoted at
Figure~\ref{fig-15}), then we calculate it as
\begin{equation}\label{eq-58}A = A(c) - A(b) \end{equation} and write $A =
\int\limits_b^cg(s)\, \Dif s.$ Notice, that it would not be good
enough if we will use $x$ in the notation $$ \int\limits_a^xg(x)\,
\Dif x
$$ in~\eqref{eq-45}, because $x$ in the upper bound of the integral
and $x$ in the expression $g(x)$ have the completely different
meaning. In the same time, we are free to write both $A =
\int\limits_b^cg(s)\, \Dif s$ or $A = \int\limits_b^cg(x)\, \Dif
x$ for the area~\eqref{eq-58}.

Thus, we can formulate the general rule for the calculating the
area $\int\limits_{x_1}^{x_2}g(t)\Dif t$:

1. Find the function $f$ such that $f' = g$.

2. The necessary area will be $f(x_2) -f(x_1)$~\footnote{This rule
is not ``completely correct''. It should be specified for some
``complicated situation'', i.e. if the function $f$ has vertical
asymptotes, or is braking. But we write this rule just for ``the
first acquaintance''.}.\\

We need to notice, that we meant ``not exactly the area'' under
the word ``area'' in this section. We have calculated the area by
the formula~\eqref{eq-44}, assuming that $\Delta x \approx 0$ but
with the additional assumption that $g(s)>0$ for $s\in [a,\, x]$.
If the function $g$ is negative at some (or all) points of $[a,\,
x]$, then~\eqref{eq-44} will, clearly, have negative summands and
the entire sum can also be negative. Such method of the
calculation of the area (precisely, such ``area'') is called the
the \textbf{signed area}. If we need ``real area'', we, evidently
should to calculate it as $\int\limits_a^x|g(s)|\Dif s$.

Notice, that the finding of the function $f$ (``finding'' here
means ``finding the explicit formula for $f$'') such that $f'=g$
(for the given $g$ needs some special techniques, which, clearly,
depends of the function $g$.

Suppose that we have an equation \begin{equation}\label{eq-65a}
\frac{f(x)}{g(y)} = \frac{\Dif y}{\Dif x},
\end{equation} where $y$ is considered as unknown function on $x$,
which has to be found, whenever functions $f$ and $g$ are given.
Notice, that~\eqref{eq-65a} is also called the
\textbf{differential equation of the separated variables}, because
it can be rewritten as $$ f(x)\Dif x = g(y)\Dif y,
$$ i.e. in the form where all the ``letters'' $x$ are left at one hand
of the equation and all the letter $y$ are put to another.

\begin{fact}\label{fact-16}
The solution of~\eqref{eq-65a} can be written in the form $$\int
f(x)\Dif x - \int g(y)\Dif y = C(x),$$ where $C'(x) = 0$.
\end{fact}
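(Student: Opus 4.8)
The plan is to verify that the proposed relation, when differentiated with respect to $x$, reproduces exactly the differential equation~\eqref{eq-65a}; this shows that the relation and the equation are equivalent, so that the relation indeed describes the solution.

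First I would introduce antiderivatives to give the integral signs a concrete meaning: let $F$ be a function with $F' = f$ and let $G$ be a function with $G' = g$, so that $\int f(x)\Dif x = F(x)$ and $\int g(y)\Dif y = G(y)$ up to the additive constants that can be absorbed into $C$. The point that must be kept firmly in mind here is that $y$ is itself the unknown function of $x$, so the second integral is really the composite $G(y(x))$, not a function of a free variable $y$.

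Next I would set $H(x) = F(x) - G(y(x))$ and compute $H'(x)$. By the chain rule (Fact~\ref{fact-10}) one has $\frac{\Dif}{\Dif x}G(y(x)) = G'(y)\cdot y'(x) = g(y)\cdot \frac{\Dif y}{\Dif x}$, and therefore
$$H'(x) = f(x) - g(y)\,\frac{\Dif y}{\Dif x}.$$
But equation~\eqref{eq-65a}, rewritten as $f(x) = g(y)\,\frac{\Dif y}{\Dif x}$, says precisely that the right-hand side vanishes; hence $H'(x) = 0$ whenever $y$ solves~\eqref{eq-65a}. By the Newton--Leibnitz theorem, a function whose derivative is identically zero must be constant, so $H(x) = F(x) - G(y(x))$ equals some $C(x)$ with $C'(x) = 0$, which is exactly the asserted form $\int f(x)\Dif x - \int g(y)\Dif y = C(x)$.

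Finally I would record the converse, so that the formula captures all solutions and not merely some: if $F(x) - G(y(x)) = C(x)$ with $C'(x) = 0$, then differentiating back and using the same chain-rule step gives $f(x) - g(y)\,y' = 0$, i.e.\ $\frac{\Dif y}{\Dif x} = \frac{f(x)}{g(y)}$. The only delicate point in the whole argument is this chain-rule step for the composite $G(y(x))$: one must not read $\int g(y)\Dif y$ as a function of an independent variable $y$, but as a function of $x$ acting through $y = y(x)$. Once that reading is fixed, the computation is immediate and the equivalence follows.
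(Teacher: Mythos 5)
Your proof is correct and takes essentially the same route as the paper: introduce antiderivatives $F$ and $G$ with $F'=f$, $G'=g$, differentiate $F(x)-G(y(x))$ by the chain rule, and use the equation to conclude this derivative vanishes, so the difference is some $C(x)$ with $C'(x)=0$; your explicit converse and the caution about reading $\int g(y)\,\Dif y$ as $G(y(x))$ are small additions, not a different method. One slight caveat: where you invoke Newton--Leibnitz to say the function ``must be constant,'' the paper deliberately stops at $C'(x)=0$ (allowing piecewise-constant $C$), a distinction it later exploits when the antiderivative $\mathcal{I}(\theta)$ turns out to be discontinuous and only piecewise adjustment yields the true integral.
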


\subsection{Interesting theorems about derivatives and
integrals}\label{sect-10}

\subsubsection{Calculating of %
derivatives of $\sin x$ and $\cos x$ as simple corollary of the
physical interpretation of uniform movement by a
circle}\label{sect-10a}

Suppose that a point moves anti clock wise on a circle with radius
$1$ and center at the Origin. Clearly, the equation of the
movement will be $$ \left\{ \begin{array}{l}
x = \cos(\omega t +\omega_0)\\
y = \sin(\omega t +\omega_0),\end{array}\right.
$$ where $\omega$ is the radial speed and $\omega_0$ is the angle
at time $0$ between the radius vector of our point and the
$X$-axis. For the simplicity of the further reasonings assume that
$\omega_0 =0$.

Since the period of functions $\sin$ and $\cos$ is $2\, \pi$, then
the period of the rotation of our point can be calculated from the
equation $ \omega T = 2\, \pi, $ whence $$ T = \frac{2\,
\pi}{\omega}.
$$ Since the length of the circle is $2\, \pi$, then the
constant speed is $$ |\overrightarrow{v}| = \frac{2\, \pi}{T} =
\omega.
$$

Evidently the vector of the velocity is the tangent vector to the
circle, i.e. $\overrightarrow{v}$ is perpendicular to $s = (\cos
\omega t,\, \sin\omega t)$, whence $$ v = k(-\sin\omega t,\,
\cos\omega t)
$$ for some $k\in \mathbb{R}$.

From another hand, $|k| = \omega$, since $|\overrightarrow{v}| =
\omega$. Moreover, $k>0$, because the movement is anti clock wise.
Thus, $\overrightarrow{v} = \omega(-\sin\omega t,\, \cos\omega t)$
and this means that
$$
\left\{ \begin{array}{l}
\cos'\omega t = -\omega\, \sin\omega t,\\
\sin'\omega t = \omega\cos\omega t.\end{array}\right.
$$ plug $\omega =1$ and obtain formulas $$
\left\{ \begin{array}{l}
\cos't = -\sin t,\\
\sin't = \cos t.\end{array}\right.$$

\subsubsection{Simple proof of the formula for the %
radius of curvature for the plain curve}\label{sect-10b}

When we introduced the derivative, we looked at any curve as the
collection of line segments, whose direction changes from point to
point. Thus, each point, ``together with its neighbor point''
determine some line (or the direction of line). In the same manner
we can consider each point of the curve at ``two its neighbor
points''. These three points, being the vertices of the triangle
(if our curve does not contain line segments) will determine a
circle, which circumscribes this triangle.

We will need some mathematical facts.

\begin{fact}\label{fact-18}
Let $\overrightarrow{u} = (x_1,\, y_1)$ and $\overrightarrow{v} =
(x_2,\, y_2)$ be two non-collinear vectors on the plane $XOY$.
Denote $$ \mathcal{P}_3[\overrightarrow{u},\, \overrightarrow{v}]
= x_1y_2 - x_2y_1.
$$ Then the area of the parallelogram, whose sides are
$\overrightarrow{u}$ and $\overrightarrow{v}$ equals
$$A = |
\mathcal{P}_3[\overrightarrow{u},\, \overrightarrow{v}]|.
$$
\end{fact}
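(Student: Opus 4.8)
The plan is to reduce the geometric area to an algebraic identity through the angle $\theta$ between the two vectors. From school geometry, the area of a parallelogram is base times height. Taking $\overrightarrow{u}$ as the base of length $|\overrightarrow{u}|$, the corresponding height is the component of $\overrightarrow{v}$ perpendicular to $\overrightarrow{u}$, namely $|\overrightarrow{v}|\sin\theta$. Hence $A = |\overrightarrow{u}|\,|\overrightarrow{v}|\sin\theta$, and the non-collinearity of the two vectors guarantees that $0<\theta<\pi$, so $\sin\theta>0$ and this area is genuinely positive.

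To connect $\theta$ with the coordinates I would use the scalar (dot) product $\overrightarrow{u}\cdot\overrightarrow{v} = x_1 x_2 + y_1 y_2 = |\overrightarrow{u}|\,|\overrightarrow{v}|\cos\theta$. Squaring the area formula and applying the identity $\sin^2\theta = 1 - \cos^2\theta$ then gives
\begin{equation}
A^2 = |\overrightarrow{u}|^2\,|\overrightarrow{v}|^2 - (\overrightarrow{u}\cdot\overrightarrow{v})^2 = (x_1^2 + y_1^2)(x_2^2 + y_2^2) - (x_1 x_2 + y_1 y_2)^2.
\end{equation}

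The remaining step is the algebraic simplification of the right-hand side. Expanding both products, the terms $x_1^2 x_2^2$ and $y_1^2 y_2^2$ cancel, and what survives is exactly $(x_1 y_2 - x_2 y_1)^2$; this is the Lagrange identity in two variables. Taking square roots yields $A = |x_1 y_2 - x_2 y_1| = |\mathcal{P}_3[\overrightarrow{u},\,\overrightarrow{v}]|$, as claimed.

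I do not expect a genuine obstacle here. The two points requiring care are keeping track of signs (the absolute value is needed because $x_1 y_2 - x_2 y_1$ may be positive or negative depending on the orientation of the two vectors, whereas the area is nonnegative) and stating clearly that the base-height formula for a parallelogram is taken as known from elementary geometry. An alternative, fully geometric route — enclosing the parallelogram in an axis-parallel rectangle and subtracting the corner triangles — would also reach the same formula, but it forces a tedious case analysis on the signs of the four coordinates, so I would prefer the short algebraic derivation above.
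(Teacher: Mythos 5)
Your proposal is correct and follows essentially the same route as the paper's own proof: both derive $A = |\overrightarrow{u}|\,|\overrightarrow{v}|\sin\theta$ from the base--height formula, introduce $\cos\theta$ via the dot product $x_1x_2 + y_1y_2 = |\overrightarrow{u}|\,|\overrightarrow{v}|\cos\theta$, and reduce $A^2$ by the Lagrange identity to $(x_1y_2 - x_2y_1)^2$. Your write-up is in fact slightly more careful than the paper's, since you make explicit the role of non-collinearity ($0<\theta<\pi$) and the need for the absolute value.
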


\begin{proof}
Denote $\theta$ the angle between $\overrightarrow{a}$ and
$\overrightarrow{b}$. By the formula for a dot-product we have
\begin{equation}\label{eq-20} x_ax_b + y_ay_b = ||a||\, ||b||\,
\cos\theta.
\end{equation}
By the formula for the area of a parallelogram we have
\begin{equation}\label{eq-21} A = ||a||\, ||b||\, \sin\theta.
\end{equation}
If follows from~\eqref{eq-20} and~\eqref{eq-21} that $$ A^2 =
||a||^2\, ||b||^2\, \left( 1- \frac{(x_ax_b + y_ay_b)^2}{||a||^2\,
||b||^2}\right) = (x_a^2 +y_a^2)\, (x_b^2 +y_b^2) - (x_ax_b +
y_ay_b)^2 =
$$$$= (\underline{x_a^2x_b^2} + x_a^2y_b^2
+x_b^2y_a^2 + \underline{\underline{y_a^2y_b^2}})%
- (\underline{x_a^2x_b^2} + 2x_ax_by_ab_y
+\underline{\underline{y_a^2y_b^2}}) =
$$$$ = x_a^2y_b^2
+x_b^2y_a^2 -2x_ax_by_ab_y =$$$$ = (x_ay_b - x_by_a)^2.$$

Let us have vectors $\overrightarrow{a} = (x_a,\, y_a)$ and
$\overrightarrow{b} = (x_b,\, y_b)$ and find the area of the
parallelogram, whose sides are $\overrightarrow{a}$ and
$\overrightarrow{b}$.
\end{proof}

Notice, that $[\overrightarrow{a},\, \overrightarrow{b}]$ is
called vector product of $\overrightarrow{u}$ and
$\overrightarrow{v}$ and is a vector $$ [\overrightarrow{u},\,
\overrightarrow{v}] = (0,\, 0,\, x_1y_2 - x_2y_1)
$$ and $\mathcal{P}_3$ is the projection of a vector to its third
coordinate, i.e. $$ \mathcal{P}_3(x,\, y,\, z) = z.
$$

We will explain some basic things about vector product in
Section~\ref{sect-09}. Nevertheless, we do not need the notion of
vector product now and we will be satisfied with
Fact~\ref{fact-18}.\\

The following properties of $\mathcal{P}_3[\overrightarrow{u},\,
\overrightarrow{v}]$ follow from the definition: $$
\mathcal{P}_3[\overrightarrow{u_1} +\overrightarrow{u_2},\,
\overrightarrow{v}] = \mathcal{P}_3[\overrightarrow{u_1},\,
\overrightarrow{v}] + \mathcal{P}_3[\overrightarrow{u_2},\,
\overrightarrow{v}],
$$$$
\mathcal{P}_3[\overrightarrow{u},\, \overrightarrow{v_1} +
\overrightarrow{v_2}]  = \mathcal{P}_3[\overrightarrow{u},\,
\overrightarrow{v_1}] + \mathcal{P}_3[\overrightarrow{u},\,
\overrightarrow{v_2}],
$$$$ \mathcal{P}_3[\alpha\overrightarrow{u_1},\, \beta\overrightarrow{v_1}]
=\alpha\beta\, \mathcal{P}_3[\overrightarrow{u_1},\,
\overrightarrow{v_1}],
$$$$ \mathcal{P}_3[\overrightarrow{u_1},\, \overrightarrow{u_1}]
=0,
$$ for all %
vectors $\overrightarrow{u_1},\, \overrightarrow{u_2},\,
\overrightarrow{v_1}$ and $\overrightarrow{v_2}$ and numbers
$\alpha,\, \beta\in \mathbb{R}$.

\begin{fact}\label{fact-23}
Let $s(t)$ be a vector equation of the curve. Then this curve can
be locally considered as a circle with radius $$ R =
\frac{|s'(t_0)|^3}{|\mathcal{P}_3[s'(t_0),\, s''(t_0) ]|}.
$$
\end{fact}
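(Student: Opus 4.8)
The plan is to realize the osculating circle as the limit of circles circumscribed about triangles formed by three points of the curve that collapse onto $s(t_0)$, exactly as the paragraph preceding the statement suggests. Concretely, I would fix a small $h \approx 0$ and take the three points $P_- = s(t_0 - h)$, $P_0 = s(t_0)$ and $P_+ = s(t_0 + h)$. These are the vertices of a triangle whose circumscribed circle has, by the classical relation between the circumradius, the three side lengths $a, b, c$ and the area $K$, radius $R = \frac{abc}{4K}$ (this follows from the law of sines together with $K = \frac{1}{2} bc \sin A$). The claim will follow by expanding every ingredient of this ratio in powers of $h$ and letting $h \to 0$.

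First I would expand the three vertices by Taylor's formula, $s(t_0 \pm h) = s(t_0) \pm h\, s'(t_0) + \frac{h^2}{2} s''(t_0) + \ldots$, and read off the three side-vectors. Two of them are $\overrightarrow{P_0 P_\pm} = \pm h\, s'(t_0) + \frac{h^2}{2} s''(t_0) + \ldots$, while the third is $\overrightarrow{P_- P_+} = 2h\, s'(t_0) + \ldots$, the $h^2$-terms cancelling. Hence to leading order the three side lengths are $h|s'(t_0)|$, $h|s'(t_0)|$ and $2h|s'(t_0)|$, so the numerator of $R$ behaves like $abc \approx 2 h^3 |s'(t_0)|^3$.

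The heart of the argument — and the step I expect to be the main obstacle — is computing the area $K$ to the correct order. Using Fact~\ref{fact-18}, $K = \frac{1}{2}\, |\mathcal{P}_3[\overrightarrow{P_0 P_+},\, \overrightarrow{P_0 P_-}]|$, and I would expand this via the bilinearity and the alternating identity $\mathcal{P}_3[\overrightarrow{u}, \overrightarrow{u}] = 0$ recorded after Fact~\ref{fact-18} (which together yield antisymmetry). The naive leading term $\mathcal{P}_3[h\, s'(t_0),\, -h\, s'(t_0)]$ vanishes, since it is $\mathcal{P}_3$ of a vector with a multiple of itself; this expresses the fact that the three points are nearly collinear on an almost-straight arc, so the area is an order of magnitude smaller than one might guess. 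The surviving cross terms $\mathcal{P}_3[h\, s',\, \frac{h^2}{2} s'']$ and $\mathcal{P}_3[\frac{h^2}{2} s'',\, -h\, s']$ add up, by antisymmetry, to $h^3\, \mathcal{P}_3[s'(t_0),\, s''(t_0)]$, giving $K \approx \frac{h^3}{2}\, |\mathcal{P}_3[s'(t_0),\, s''(t_0)]|$. Keeping track of precisely which order survives — that the area is cubic rather than quadratic in $h$, and is governed by $s''$ — is the crux of the proof; it is exactly where the curvature enters.

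Finally I would substitute into $R = \frac{abc}{4K}$, obtaining $R \approx \frac{2 h^3 |s'(t_0)|^3}{4 \cdot \frac{h^3}{2} |\mathcal{P}_3[s'(t_0),\, s''(t_0)]|} = \frac{|s'(t_0)|^3}{|\mathcal{P}_3[s'(t_0),\, s''(t_0)]|}$, in which every power of $h$ cancels, so that the limit $h \to 0$ yields the stated formula. Implicit throughout is the assumption $\mathcal{P}_3[s'(t_0),\, s''(t_0)] \neq 0$, i.e.\ that the curve is genuinely curved at $t_0$, so that the two vectors are non-collinear and Fact~\ref{fact-18} applies; otherwise $R$ is infinite and the curve is locally straight.
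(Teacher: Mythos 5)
Your proposal is correct and follows essentially the same route as the paper's proof: the circle circumscribed about a triangle of three nearby points on the curve, the circumradius relation $R = \frac{abc}{4K}$, side lengths read off from the first-order terms, and the area extracted at order $h^3$ via the bilinearity and antisymmetry of $\mathcal{P}_3$. The only differences are cosmetic — you take symmetric nodes $t_0 \pm h$ with Taylor expansions where the paper takes forward nodes $t_0$, $t_0 + \Dif t$, $t_0 + 2\Dif t$ with difference quotients, and you handle the factor of two between the parallelogram area of Fact~\ref{fact-18} and the triangle area more explicitly than the paper does (whose final formula silently compensates for it).
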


\begin{proof}
Let $s(t)$ be an equation of out curve. Consider an arbitrary
moment $t_0$ and $\Delta t \approx 0$. The circle, which we are
talking about passes through points $s(t_0)$, $s(t_0+\Delta t)$
and $s(t_0+2\Delta t)$. Notice that these three are vectors,
whence can be considered as points on a plain, is we assume that
the vectors have start point at the Origin.

We will find now the radius of the circle, which is described
about the triangle with vertices $s(t_0)$, $s(t_0+\Dif t)$ and
$s(t_0+2\Dif t)$.

We can use differentials to express $s(t_0+\Dif t)$ and
$s(t_0+2\Dif t)$ in terms of $s(t_0)$, $s'(t_0)$ and $s''(t_0)$.
Since $$ s'(t_0) =\frac{s(t_0+\Dif t) - s(t_0)}{\Dif t},
$$ then $$ s(t_0 +\Dif t) = s(t_0) + s'(t_0)\Dif t.
$$ Also by $$ s''(t_0) = \frac{s'(t_0+\Dif t) -
s'(t_0)}{\Dif t} = \frac{\frac{s(t_0+2\Dif t) - s(t_0+\Dif
t)}{\Dif t} - s'(t_0)}{\Dif t}
$$ obtain $$
s(t_0 +2\Dif t) = (s''(t_0)\Dif t +s'(t_0))\Dif t + s(t_0 +\Dif
t),
$$ whence $$ s(t_0 + 2\Dif t) = %
s(t_0) +2s'(t_0)\Dif t +s''(t_0)(\Dif t)^2.$$

Thus, we want to find the radius of the circle, which is described
about the triangle with sides $\overrightarrow{a} = s(t_0+\Dif t)
- s(t_0)$, $\overrightarrow{b} = s(t_0+2\Dif t) -s(t_0+\Dif t)$
and $\overrightarrow{c} = s(t_0+2\Dif t) -s(t_0)$. We can express
$\overrightarrow{a}$, $\overrightarrow{b}$ and
$\overrightarrow{c}$ as $$ \overrightarrow{a} = s'(t_0)\Dif t;
$$$$
\overrightarrow{b} = s''(t_0)(\Dif t)^2 + s'(t_0)\Dif t;
$$$$
\overrightarrow{c} = 2s'(t_0)\Dif t +s''(t_0)(\Dif t)^2.
$$

We know, that the radius of the circle, which is descried over the
triangle, can be found as $$ R = \frac{|\overrightarrow{a}|\,
|\overrightarrow{b}|\, |\overrightarrow{c}|}{4A},
$$ where %
$|\overrightarrow{a}|,\, |\overrightarrow{b}|,\,
|\overrightarrow{c}|$ are sides of the triangle and $A$ is its
area.

By Fact~\ref{fact-18}  the area of the triangle, formed by these
vectors is $$ A = \mathcal{P}_3[\overrightarrow{a},\,
\overrightarrow{b}] = \mathcal{P}_3[s'(t_0)\Dif t,\, s''(t_0)(\Dif
t)^2 + s'(t_0)\Dif t]
$$ and can be simplifies as $$ A =\mathcal{P}_3[s'(t_0)\Dif t,\, s''(t_0)(\Dif
t)^2 + s'(t_0)\Dif t] = \mathcal{P}_3[s'(t_0)\Dif t,\,
s''(t_0)(\Dif t)^2 ] =
$$$$
=\mathcal{P}_3[s'(t_0),\, s''(t_0) ](\Dif t)^3
$$
Since $\Dif t\approx 0$, then $(\Dif t)^2$ is much smaller than
$\Dif t$. Whence, we can suppose that $$ |\overrightarrow{a}|
\approx |s'(t_0)|\Dif t,
$$$$
|\overrightarrow{b}| \approx |s'(t_0)|\Dif t
$$ and $$ |\overrightarrow{c}| \approx 2|s'(t_0)|\Dif t.
$$ Thus, $$
R = \frac{|\overrightarrow{a}|\, |\overrightarrow{b}|\,
|\overrightarrow{c}|}{2 \mathcal{P}_3[\overrightarrow{a},\,
\overrightarrow{b} ]} \approx
\frac{|s'(t_0)|^3}{|\mathcal{P}_3[s'(t_0),\, s''(t_0) ]|}.
$$

Notice, that $(\Dif t)^2$ is as close to $0$, comparing with $\Dif
t$, as $\Dif t$ itself is close to $0$. But $\Dif t$ is ``as close
to 0, as possible'', whence $$ R =
\frac{|s'(t_0)|^3}{|\mathcal{P}_3[s'(t_0),\, s''(t_0) ]|}
$$ and the equality is exact, but not just ``approximate''.
\end{proof}

\newpage
\section{Additional mathematical notions and facts}

In this section we will formulate mathematical facts, which we
will use them in our explanations about Kepler's laws. We will try
to explain these facts as clear as possible (without proves, just
explanations of facts themselves), but we will not stay on
technical calculations.

Reader, for whom these facts are new, has to believe them for the
first reading, but to try to understand, what these facts claim,
i.e. to understand, what is stated in the facts, because they will
be used in further computations. The experienced mathematician can
understand the level of math, which will be used later. More or
less detailed explanations of the formulated facts are given in
section~\ref{sect-proves}.

\subsection{Polar coordinates}

Any coordinates of a point in a plane are exactly the way of the
coding the its position by numbers. Precisely, the known cartesian
coordinates of a point $A$ are projections $(x_a,\, y_a)$ of the
vector $OA$ to the fixed ``coordinate lines'' $Ox$ and $Oy$,
passing through the point $O$.

Polar coordinates are constructed as follows. Suppose that we
already have cartesian coordinates (the the origin $O$ and axes
$Ox$ and $Oy$) on the plane. Call the ray $Ox$ the axis (it will
be unique, so we do not need to mention that it is $x$-axis) and
call $\theta$ the angle between $OA$ and $Ox$, calculating from
$Ox$ to $OA$ (for example, points on the positive $y$-axis have
$\theta = 90^0$; points on the negative $y$-axis have $\theta =
270^0$ and points on the line $y=x$ have $\theta = 45^0$ and
$\theta = 135^0$, dependently on the quadrant). Denote $r$ the
length of the segment $OA$, whence the pair
\begin{equation}\label{eq-02} (r,\, \theta),\, r\geq 0,\, \theta \in
[0,\, 360^0) \end{equation} defines all the points on the plain.
Moreover, if $r>0$, then the correspondence is one-to-one and $O$
corresponds to $r=0$ with any $\theta$.

\begin{fact}\label{fact-02}
If $(r,\, \theta)$ are polar coordinates~\eqref{eq-02} of a point
$A$ on a plane, then its cartesian coordinates are
\begin{equation}\label{eq-31}
\left\{\begin{array}{l}x = r\cos\theta,\\
y = r\sin\theta.\end{array} \right.
\end{equation}
\end{fact}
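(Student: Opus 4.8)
The plan is to reduce the claim to the very definition of the trigonometric functions via the unit circle. First I would treat the case $r=1$: by definition, the point on the circle of radius $1$ centered at $O$ whose radius vector makes angle $\theta$ with the positive $x$-axis has cartesian coordinates $(\cos\theta,\, \sin\theta)$. This is precisely how $\cos\theta$ and $\sin\theta$ are defined for an arbitrary $\theta\in[0,\,360^0)$, so for $r=1$ the formulas~\eqref{eq-31} hold by definition.

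Next I would pass to a general $r\geq 0$ by a scaling argument. The point $A$ with polar coordinates $(r,\,\theta)$ lies on the same ray from $O$ as the unit-circle point $B=(\cos\theta,\,\sin\theta)$, but at distance $r$ rather than $1$; hence the position vector of $A$ is $r$ times the position vector of $B$. Since scaling a vector by $r$ scales each cartesian coordinate by $r$, we obtain $x=r\cos\theta$ and $y=r\sin\theta$, which is~\eqref{eq-31}. For a reader who prefers an elementary picture, in the first quadrant one may instead drop the perpendicular from $A$ to the $x$-axis at the foot $P=(x,\,0)$: the triangle $OPA$ is right-angled at $P$ with hypotenuse $|OA|=r$ and angle $\theta$ at $O$, so $\cos\theta=x/r$ and $\sin\theta=y/r$, giving the same conclusion.

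The only real obstacle is making sure~\eqref{eq-31} survives in all four quadrants, where $x$ or $y$ may be negative and where $\theta$ can exceed $90^0$. The right-triangle argument of the previous paragraph covers only acute $\theta$; to reach every $\theta\in[0,\,360^0)$ I would lean on the unit-circle definition, which already has the correct signs of $\cos\theta$ and $\sin\theta$ built in. If one insists on the triangle picture, the clean way to finish is a short case analysis over the four quadrants, checking in each that the signs of $\cos\theta$ and $\sin\theta$ agree with the signs of the projections $x$ and $y$; the degenerate case $r=0$ (the origin, with $\theta$ arbitrary) and the points lying on the axes are then immediate.
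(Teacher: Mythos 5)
Your proposal is correct, but there is nothing in the paper to measure it against: Fact~\ref{fact-02} is one of the statements the author asks the reader to accept in Section~3, and the section of detailed computations (Section~\ref{sect-proves}) never returns to it --- its subsection on polar coordinates proves only Fact~\ref{fact-03}, and that proof in fact \emph{uses}~\eqref{eq-31} without comment in its opening line, so your argument supplies a justification the paper silently omits. On its own merits the proof is sound and well organized: reducing to $r=1$, where~\eqref{eq-31} is the unit-circle definition of $\cos\theta$ and $\sin\theta$, and then scaling the position vector by $r$ is the cleanest route, since the scaling step is the only substantive geometric content once the unit-circle definition is adopted. Your caution about the four quadrants is also well placed for this paper's conventions, because~\eqref{eq-02} allows any $\theta\in[0,\,360^0)$, so the naive right-triangle argument (which you correctly restrict to acute $\theta$ in the first quadrant) cannot stand alone; deferring the sign bookkeeping to the unit-circle definition, with the case $r=0$ and the axis points checked separately, closes every gap. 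The one caveat worth stating explicitly if this were written out in full: which parts are ``by definition'' depends on how $\sin$ and $\cos$ were introduced to the reader (right triangles versus unit circle), and your proof correctly hedges by offering both routes and noting that only the unit-circle route covers all $\theta$ without case analysis.
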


\begin{fact}\label{fact-03}
Suppose that the position-vector of a point is given by polar
coordinates~\eqref{eq-02}. Then speed of this point can be found
by
\begin{equation}\label{eq-12}v^2 = \left( \frac{\Dif r}{\Dif t}\right)^2 +r^2\left(
\frac{d\theta}{dt}\right)^2
\end{equation}
\end{fact}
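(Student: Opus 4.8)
The plan is to express the position in Cartesian coordinates via Fact~\ref{fact-02}, differentiate componentwise using Fact~\ref{fact-01}, and then form $v^2 = (x')^2 + (y')^2$. So first I would recall that $x = r\cos\theta$ and $y = r\sin\theta$, where now both $r = r(t)$ and $\theta = \theta(t)$ are functions of time. By Fact~\ref{fact-01} the velocity vector is $\overrightarrow{v} = (x'(t),\, y'(t))$, so the squared speed is $v^2 = (x'(t))^2 + (y'(t))^2$, and the whole statement reduces to computing $x'$ and $y'$.

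To differentiate $x = r\cos\theta$ I would apply the product rule (Fact~\ref{fact-11}) to the two factors $r$ and $\cos\theta$, and then the chain rule (Fact~\ref{fact-10}) together with the table of derivatives (Fact~\ref{fact-12}) to handle $(\cos\theta)' = -\sin\theta\cdot\theta'$. This gives $x' = r'\cos\theta - r\,\theta'\sin\theta$, and in the same way $y' = r'\sin\theta + r\,\theta'\cos\theta$. The one point that demands care here — really the only subtlety in the whole argument — is that $\theta$ is itself a function of $t$, so the chain rule forces the extra factor $\theta' = \frac{d\theta}{dt}$; omitting it is the natural mistake to guard against.

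Finally I would square and add the two derivatives. Expanding $(x')^2$ and $(y')^2$ produces two cross terms, namely $-2r r'\theta'\sin\theta\cos\theta$ and $+2r r'\theta'\sin\theta\cos\theta$, which cancel, while the surviving terms group as $(r')^2(\cos^2\theta + \sin^2\theta) + r^2(\theta')^2(\sin^2\theta + \cos^2\theta)$. Using the identity $\sin^2\theta + \cos^2\theta = 1$ collapses this to $v^2 = (r')^2 + r^2(\theta')^2$, which is exactly~\eqref{eq-12} once we rewrite $r' = \frac{\Dif r}{\Dif t}$ and $\theta' = \frac{d\theta}{dt}$.

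I do not expect any genuine obstacle: the computation is routine once the facts about differentiation are granted. The only places that reward attention are the chain-rule factor $\theta'$ appearing in each derivative and the bookkeeping in the expansion that makes the cross terms cancel and the Pythagorean identity apply. Everything I use is stated earlier in the excerpt (Facts~\ref{fact-01}, \ref{fact-02}, \ref{fact-10}, \ref{fact-11} and~\ref{fact-12}), so no further machinery is needed.
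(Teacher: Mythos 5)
Your proposal is correct and follows essentially the same route as the paper's own proof: both convert to Cartesian coordinates via $x = r\cos\theta$, $y = r\sin\theta$, differentiate with the product and chain rules to get $x' = r'\cos\theta - r\theta'\sin\theta$ and $y' = r'\sin\theta + r\theta'\cos\theta$, and then expand $v^2 = (x')^2 + (y')^2$, with the cross terms cancelling and the identity $\sin^2\theta + \cos^2\theta = 1$ collapsing the rest. No gaps; nothing further is needed.
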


\subsection{Equation of the plane}

\begin{fact}\label{fact-15}
The set of points on the space, which satisfy the equation $$ Ax+
By  +Cz = D
$$ is a plain. Moreover, the equation of each plain is of this
form.
\end{fact}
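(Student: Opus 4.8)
The plan is to prove the two halves of the statement separately, in both cases using the \emph{normal vector} $\overrightarrow{n} = (A,\, B,\, C)$ together with the geometric meaning of the dot product already employed in the proof of Fact~\ref{fact-18}: two vectors are perpendicular exactly when their dot product vanishes. Throughout I would assume $\overrightarrow{n} \neq (0,\, 0,\, 0)$, i.e. that $A,\, B,\, C$ are not all zero, since otherwise $Ax + By + Cz = D$ either has no solutions (when $D \neq 0$) or is satisfied by the whole space (when $D = 0$), and in neither case describes a plane. This degenerate case is the one place where the hypothesis must be made explicit.

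For the first half, I would first exhibit one point on the locus. Since some coefficient is nonzero — say $A \neq 0$ — the point $P_0 = (D/A,\, 0,\, 0)$ satisfies the equation, so the solution set is nonempty; writing $P_0 = (x_0,\, y_0,\, z_0)$ we have $D = Ax_0 + By_0 + Cz_0$. Subtracting this identity from the original equation rewrites it as
\begin{equation}
A(x - x_0) + B(y - y_0) + C(z - z_0) = 0,
\end{equation}
which says precisely that $\overrightarrow{P_0 P} = (x - x_0,\, y - y_0,\, z - z_0)$ has zero dot product with $\overrightarrow{n}$. Hence the locus is exactly the set of points $P$ for which $\overrightarrow{P_0 P} \perp \overrightarrow{n}$, and this is the defining description of the plane through $P_0$ with normal direction $\overrightarrow{n}$.

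For the converse, I would start from a given plane, pick any point $P_0 = (x_0,\, y_0,\, z_0)$ on it, and let $\overrightarrow{n} = (A,\, B,\, C)$ be a nonzero vector perpendicular to the plane. A point $P = (x,\, y,\, z)$ lies on the plane if and only if $\overrightarrow{P_0 P} \perp \overrightarrow{n}$, which by the dot-product criterion reads $A(x - x_0) + B(y - y_0) + C(z - z_0) = 0$. Setting $D = Ax_0 + By_0 + Cz_0$ puts this into the required form $Ax + By + Cz = D$.

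The main obstacle is not computational but definitional: the argument rests on the characterization of a plane as ``the set of points $P$ with $\overrightarrow{P_0 P}$ perpendicular to a fixed nonzero normal $\overrightarrow{n}$''. In a book at this level I would spend a sentence motivating why this matches the intuitive notion of a plane — that fixing a point and forbidding any displacement along one direction leaves exactly a two-dimensional flat sheet — so that the equivalence between the algebraic equation and the geometric object is genuinely established rather than merely asserted.
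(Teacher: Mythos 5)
Your proof is correct, but there is nothing in the paper to compare it against: Fact~\ref{fact-15} is one of the statements the author asks the reader simply to believe, and the detailed-computations section (Section~\ref{sect-proves}) contains no argument for it, even though the surrounding text promises explanations of the formulated facts there. Your argument is the standard and right one --- rewrite $Ax+By+Cz=D$ about a particular solution $P_0$ as $A(x-x_0)+B(y-y_0)+C(z-z_0)=0$, recognize this as the vanishing of the dot product $\overrightarrow{n}\cdot\overrightarrow{P_0P}$ with $\overrightarrow{n}=(A,\,B,\,C)$, and run the same dot-product criterion in reverse for the converse --- and it meshes well with the paper's toolkit, since the geometric meaning of the dot product is already invoked in the proof of Fact~\ref{fact-18}. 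Two features of your write-up are genuine improvements over the bare statement in the text: you make explicit the hypothesis $(A,\,B,\,C)\neq(0,\,0,\,0)$, without which the claim is false (the paper's formulation silently omits this), and you flag honestly that the whole argument rests on the definitional characterization of a plane as $\{P:\ \overrightarrow{P_0P}\perp\overrightarrow{n}\}$ for a fixed point $P_0$ and fixed nonzero $\overrightarrow{n}$ --- a point worth a sentence of motivation in a book at this level, exactly as you propose. The only thing you might add for completeness is a remark on why a plane, however the reader initially conceives of it (say, through three non-collinear points), admits such a normal vector; at the intended level of rigor, though, your treatment is entirely adequate.
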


\subsection{Ellipse}\label{sect-05}

An ellipse is a plane geometrical figure, which is defined as
follows. Fix two points, say $F_1,\, F_2$ on the plain, which will
contain the ellipse. The ellipse is consisted of all the points
$M$ of the plain such that $MF_1 + MF_2 = 2a$, where $a$ is fixed
at the very beginning. Points $F_1$ and $F_2$ are called focuses
of the ellipse.

Figure~\ref{fig-01} contains an ellipse, whose so called ''focal
line'' (i.e. line, which contains focuses) is the $x$-axis and
origin $O$ is the midpoint of $F_1F_2$. Points of the intersection
of the ellipse and the $x$-axis are denoted $A$ and $C$.
\begin{figure}[h]
\center{\includegraphics[width=8cm]{./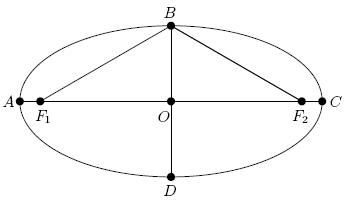}}
\caption{}\label{fig-01}
\end{figure}
It is clear that in this case $a = AO = OC$. Denote $B,\, D$ the
intersections of the ellipse with the $y$-axis and $b = BO = OD$.
Notice, that $F_1O = \sqrt{a^2 -b^2}$, whence $a\geq b$.

\begin{fact}\label{fact-20}
If the Cartesian coordinates of the focuses of the ellipse are
$F_1(-f,\, 0)$ and $F_2(f,\, 0)$, then $$ \frac{x^2}{a} +
\frac{y^2}{b} = 1
$$ is an equation of the ellipse.
\end{fact}

The number $$ e = \frac{\sqrt{a^2 -b^2}}{a} $$ is called
eccentricity of the ellipse. It is clear that $e \in [0,\, 1) $
and $e=0$ if and only if the ellipse is a circle.

\begin{fact}\label{fact-21}
If $F_1$ is in the polar origin and $F_2$ is on the polar axis,
then the equation of the ellipse is $$ r =
\frac{p}{1-e\cos\theta}, $$ where $$ p = \frac{b^2}{a}$$ and is
called semi-latus rectum of the ellipse.
\end{fact}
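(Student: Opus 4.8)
The plan is to work directly from the focal definition of the ellipse, $MF_1 + MF_2 = 2a$, re-expressed in polar coordinates centred at $F_1$. First I would fix the frame: place $F_1$ at the polar origin and let $F_2$ lie on the polar axis. From the discussion preceding Fact~\ref{fact-20} the foci are separated by $F_1F_2 = 2\sqrt{a^2-b^2}$, so writing $f = \sqrt{a^2-b^2}$ the second focus has Cartesian coordinates $(2f,\,0)$ relative to $F_1$. A point $M$ with polar coordinates $(r,\,\theta)$ then has Cartesian coordinates $(r\cos\theta,\, r\sin\theta)$, and by construction $MF_1 = r$.

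Next I would exploit the focal-sum property to turn the problem into a single equation in $r$ and $\theta$. Since $MF_1 + MF_2 = 2a$, we have $MF_2 = 2a - r$, so it suffices to compute $MF_2$ a second way, by the distance formula, and equate the two. This gives $MF_2^2 = (r\cos\theta - 2f)^2 + (r\sin\theta)^2 = r^2 - 4fr\cos\theta + 4f^2$, while $(2a-r)^2 = 4a^2 - 4ar + r^2$. Setting these equal, the $r^2$ terms cancel and, after dividing by $4$, one is left with a relation that is linear in $r$:
\[
a^2 - ar = f^2 - fr\cos\theta,
\]
which rearranges to $a^2 - f^2 = r\,(a - f\cos\theta)$.

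Finally I would simplify using the relations already recorded in the excerpt. Since $a^2 - f^2 = a^2 - (a^2 - b^2) = b^2$, solving for $r$ gives $r = b^2/(a - f\cos\theta)$; dividing numerator and denominator by $a$ and recalling that the eccentricity is $e = f/a = \sqrt{a^2-b^2}/a$ yields exactly
\[
r = \frac{b^2/a}{1 - e\cos\theta} = \frac{p}{1 - e\cos\theta},
\]
with $p = b^2/a$ as claimed.

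The computation itself is entirely routine; the single point that demands care is the sign in the denominator. The formula comes out as $1 - e\cos\theta$ rather than $1 + e\cos\theta$ precisely because $F_2$ is taken on the positive polar axis, so that the vertex of the ellipse nearest $F_1$ occurs at $\theta = 180^\circ$; placing $F_2$ on the negative axis instead would flip the sign. Hence the main thing to pin down before any algebra is the orientation convention relating $F_1$, $F_2$ and the polar axis, after which everything follows from the distance formula and the identity $a^2 - f^2 = b^2$.
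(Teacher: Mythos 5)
Your proof is correct and takes essentially the same route as the paper's own: place $F_1$ at the polar origin with $F_2$ at $(2f,\,0)$, use $MF_2 = 2a - r$ together with the distance formula, square, cancel $r^2$, and finish with $a^2 - f^2 = b^2$ and $e = f/a$. Your closing observation about the orientation convention (why the denominator is $1 - e\cos\theta$) is a sound sanity check that the paper leaves implicit, but the argument itself is the same.
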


\begin{fact}\label{fact-22}
The area of the ellipse can be calculated as $A = \pi\, ab.$
\end{fact}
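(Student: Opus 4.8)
The plan is to compute the area directly as a definite integral, using the area interpretation of the integral from~\eqref{eq-45} together with the equation of the ellipse. Solving the ellipse equation of Fact~\ref{fact-20} for $y$ gives the upper half as $y = b\sqrt{1 - x^2/a^2}$, defined for $x \in [-a,\, a]$. By the symmetry of the ellipse about the $x$-axis, the total area is twice the area under this upper half (a signed area that here coincides with the genuine area, since $y\geq 0$), so $$ A = 2\int\limits_{-a}^{a} b\sqrt{1 - \frac{x^2}{a^2}}\, \Dif x. $$ Everything now reduces to evaluating this single integral.

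First I would remove the square root by the substitution $x = a\sin t$, so that $\Dif x = a\cos t\, \Dif t$ (by Fact~\ref{fact-12}) and $\sqrt{1 - x^2/a^2} = \cos t$ on the relevant range $t \in [-\pi/2,\, \pi/2]$, where $\cos t\geq 0$. The bounds $x = \pm a$ correspond to $t = \pm\pi/2$, and the integral becomes $$ A = 2ab\int\limits_{-\pi/2}^{\pi/2}\cos^2 t\, \Dif t. $$ To finish I would use the half-angle identity $\cos^2 t = \frac{1}{2}(1 + \cos 2t)$, whose antiderivative is $\frac{1}{2}\left( t + \frac{1}{2}\sin 2t\right)$; one checks this by differentiating with the chain rule (Fact~\ref{fact-10}) and Fact~\ref{fact-12}. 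Evaluating between $-\pi/2$ and $\pi/2$ gives $\int_{-\pi/2}^{\pi/2}\cos^2 t\, \Dif t = \pi/2$, hence $A = 2ab\cdot \pi/2 = \pi ab$, as claimed.

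The main obstacle is technical rather than conceptual: the substitution $x = a\sin t$ is a change of variable inside an integral, a device not developed earlier in the text, so I would justify it informally by noting that the differential relation $\Dif x = a\cos t\, \Dif t$ is exactly what the chain rule produces. The second delicate point is producing the antiderivative of $\cos^2 t$, which needs the half-angle identity and not merely the bare table of derivatives. A cleaner alternative that avoids both difficulties is a scaling argument: the ellipse is the image of the circle of radius $a$ (whose area $\pi a^2$ we may take as known) under the vertical compression $(x,\, y)\mapsto (x,\, \frac{b}{a}y)$. This map multiplies the height of every thin vertical strip appearing in the Riemann sum~\eqref{eq-44} by the factor $\frac{b}{a}$, hence multiplies the total area by $\frac{b}{a}$, giving $A = \pi a^2\cdot \frac{b}{a} = \pi ab$ with no trigonometric integration at all.
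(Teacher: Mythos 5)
Your integral computation is correct, but it is a genuinely different route from the one the paper takes. The paper's proof is exactly your ``cleaner alternative'': it starts from a circle (the paper uses the circle of radius $b$ and stretches horizontally by the factor $\frac{a}{b}$, while you compress the circle of radius $a$ vertically by $\frac{b}{a}$ --- the same idea up to which circle one starts from), and argues that each infinitesimal rectangle $\Delta x \times \Delta y$ in the area decomposition has one side scaled by a constant factor, so the total area scales by that factor, giving $\pi b^2 \cdot \frac{a}{b} = \pi ab$. Your main argument instead evaluates $A = 2\int_{-a}^{a} b\sqrt{1 - x^2/a^2}\,\Dif x$ via the substitution $x = a\sin t$ and the antiderivative of $\cos^2 t$; every step checks out, including the sign of $\cos t$ on $[-\pi/2,\,\pi/2]$ and the evaluation $\int_{-\pi/2}^{\pi/2}\cos^2 t\,\Dif t = \pi/2$. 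What each approach buys: your integration is self-contained in that it does not presuppose the area $\pi r^2$ of a circle, but it imports machinery (change of variable in a definite integral, the half-angle identity) that the text never develops --- a cost you correctly flag yourself. The paper's scaling argument needs only the Riemann-sum picture of area from~\eqref{eq-44} plus the known circle area, which matches the book's elementary level; its one soft spot, which the paper glosses over and you implicitly handle by invoking~\eqref{eq-44}, is the claim that scaling every thin strip scales the limiting area, which deserves at least the one-line justification you give.
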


\subsection{Mathematical expression of the Second Kepler's law}

\begin{fact}\label{fact-19} Let the trajectory of the planet is
given in Cartesian coordinates as $$ \frac{x^2}{a^2} +
\frac{y^2}{b^2} = 1,
$$ where $a>b>0$. Then the Second Kepler's law can be rewritten as
$$
x\frac{dy}{dt} -y\frac{dx}{dt} = C,
$$ %
where $C$ is a constant, dependent on the planet and independent
on time. Moreover, $$C = \frac{A}{T}, $$ where $A$ is the area of
the ellipse, which is the trajectory of the planet and $T$ is the
period of the rotation.
\end{fact}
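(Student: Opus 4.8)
The plan is to read Kepler's second law as the statement that the \emph{areal velocity} of the radius vector joining the Sun to the planet is constant, and then to evaluate that constant by integrating over one complete revolution. I would place the Sun, which sits at a focus of the ellipse, at the origin $O$, so that the planet's position is the vector $(x(t),\, y(t))$. In an infinitesimal time $\Dif t$ the radius vector moves from $(x,\, y)$ to $(x+\Dif x,\, y +\Dif y)$, and the region it sweeps out is, to first order in $\Dif t$, the triangle with vertices $O$, $(x,\, y)$ and $(x+\Dif x,\, y+\Dif y)$.

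First I would compute the area of this triangle. By Fact~\ref{fact-18}, the signed area of the parallelogram spanned by $\overrightarrow{u} = (x,\, y)$ and $\overrightarrow{v} = (x+\Dif x,\, y+\Dif y)$ is $\mathcal{P}_3[\overrightarrow{u},\, \overrightarrow{v}] = x(y+\Dif y) - (x+\Dif x)y = x\,\Dif y - y\,\Dif x$, so the swept triangle has area $\tfrac{1}{2}(x\,\Dif y - y\,\Dif x)$. Dividing by $\Dif t$, the rate at which area is swept out is
$$\frac{\Dif A}{\Dif t} = \frac{1}{2}\left( x\frac{\Dif y}{\Dif t} - y\frac{\Dif x}{\Dif t}\right).$$
Kepler's second law asserts precisely that equal areas are swept in equal times, i.e. that $\frac{\Dif A}{\Dif t}$ is independent of $t$; this is equivalent to saying that $x\frac{\Dif y}{\Dif t} - y\frac{\Dif x}{\Dif t}$ equals some constant $C$, which is the first assertion of the statement.

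To pin down the value of $C$ I would integrate over one period. Since the focus lies inside the ellipse, as the planet completes one revolution the radius vector sweeps the entire interior exactly once, so $\int_0^T \frac{\Dif A}{\Dif t}\,\Dif t$ equals the ellipse area $A = \pi a b$ of Fact~\ref{fact-22}. As the integrand is the constant $\tfrac{C}{2}$, this yields $\tfrac{C}{2}\,T = A$, hence a relation between $C$, $A$ and $T$.

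The step I expect to be the only real obstacle is the bookkeeping of this final constant: the triangle-area formula carries the factor $\tfrac{1}{2}$, so a literal computation gives $C = 2A/T$ rather than the $C = A/T$ printed in the statement. Reconciling the two amounts to either absorbing the $\tfrac{1}{2}$ into the meaning of $C$ (interpreting $C$ as the areal velocity itself rather than twice it) or correcting the displayed formula to $C = 2A/T$. Apart from this factor, the only care required is orientational, namely fixing the sense of traversal so that $x\frac{\Dif y}{\Dif t} - y\frac{\Dif x}{\Dif t}$ keeps a fixed sign along the whole orbit, which it does because the motion proceeds monotonically around the focus.
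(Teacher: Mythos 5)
Your proposal is correct and follows essentially the same route as the paper: both read the Second Kepler's law as constancy of the swept area computed via Fact~\ref{fact-18}, and both evaluate the constant by letting the radius vector sweep the full ellipse over one period. The one substantive difference is the factor you flag: the paper's proof takes the swept region's area to be the full parallelogram value $\left|x(t)\,y(t+\Delta t) - y(t)\,x(t+\Delta t)\right|$ rather than half of it, i.e.\ it silently omits the triangle factor $\tfrac{1}{2}$, and this omission is exactly what lets it arrive at $C = \frac{A}{T}$. Your more careful computation, with the triangle area $\tfrac{1}{2}\left(x\,\Dif y - y\,\Dif x\right)$, correctly yields $C = \frac{2A}{T}$ for the constant in the displayed equation; so the discrepancy you identify is a genuine slip in the paper, not in your argument, and your remark about fixing the orientation to drop the absolute value matches the paper's corresponding step.
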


\begin{fact}\label{fact-04}
If the Sun is in the Origin, then the Second Kepler's law in polar
coordinates is expressed as
$$
r^2\frac{d\theta}{dt}=C,
$$ where $C$ is the constant from Fact~\ref{fact-19}.
\end{fact}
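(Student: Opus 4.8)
The plan is to deduce Fact~\ref{fact-04} directly from Fact~\ref{fact-19} by showing that the Cartesian expression $x\frac{dy}{dt} - y\frac{dx}{dt}$ is identically equal to the polar expression $r^2\frac{d\theta}{dt}$. Since Fact~\ref{fact-19} already asserts that the former equals the constant $C$, once this identity is in hand the claim follows immediately, with the very same constant $C$. So the whole content of the statement is a change-of-variables identity relating the two coordinate systems.

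To establish that identity, I would start from the polar-to-Cartesian conversion of Fact~\ref{fact-02}, namely $x = r\cos\theta$ and $y = r\sin\theta$, keeping in mind that along the trajectory both $r$ and $\theta$ are functions of the time $t$. Differentiating each with respect to $t$ by the product rule (Fact~\ref{fact-11}), the chain rule (Fact~\ref{fact-10}), and the table of derivatives (Fact~\ref{fact-12}) gives
$$\frac{dx}{dt} = \frac{dr}{dt}\cos\theta - r\sin\theta\,\frac{d\theta}{dt}, \qquad \frac{dy}{dt} = \frac{dr}{dt}\sin\theta + r\cos\theta\,\frac{d\theta}{dt}.$$

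Then I would form the combination $x\frac{dy}{dt} - y\frac{dx}{dt}$ by substituting these expressions together with $x = r\cos\theta$ and $y = r\sin\theta$. The two terms carrying the factor $\frac{dr}{dt}$ are $r\cos\theta\sin\theta\,\frac{dr}{dt}$ and $r\sin\theta\cos\theta\,\frac{dr}{dt}$, which cancel upon subtraction; the surviving terms combine into $r^2\bigl(\cos^2\theta + \sin^2\theta\bigr)\frac{d\theta}{dt}$, which equals $r^2\frac{d\theta}{dt}$. Combining this with Fact~\ref{fact-19} yields $r^2\frac{d\theta}{dt} = C$, as required.

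There is no genuine obstacle here: the only thing to watch is the careful application of the product and chain rules to the products $r\cos\theta$ and $r\sin\theta$ (in particular, not dropping the $\frac{dr}{dt}$ terms) and the observation that precisely those radial terms cancel, leaving the Pythagorean factor $\cos^2\theta+\sin^2\theta=1$. The identity is exact rather than approximate, so no appeal to small quantities or differentials is needed, and the constant $C$ is transported unchanged from Fact~\ref{fact-19}.
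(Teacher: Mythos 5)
Your proposal is correct and matches the paper's own proof essentially step for step: both differentiate $x = r\cos\theta$, $y = r\sin\theta$ with respect to $t$, substitute into $x\frac{dy}{dt} - y\frac{dx}{dt}$ from Fact~\ref{fact-19}, and observe that the radial terms cancel while the angular terms combine via $\cos^2\theta + \sin^2\theta = 1$ to give $r^2\frac{d\theta}{dt} = C$. Nothing further is needed.
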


\begin{fact}\label{fact-17}
Denote $u = \frac{1}{r}$. Then in polar coordinates the Second
Kepler's Law can be written as $$ v^2 = C^2\left[\left(\frac{\Dif
u}{\Dif t}\right)^2 +u^2\right].
$$
\end{fact}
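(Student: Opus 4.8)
The plan is to reduce the Second Kepler's law, in its speed form, to the single substitution $u=\frac{1}{r}$. I would start from the polar expression for the speed given in Fact~\ref{fact-03},
$$v^2 = \left(\frac{\Dif r}{\Dif t}\right)^2 + r^2\left(\frac{\Dif\theta}{\Dif t}\right)^2,$$
and rewrite both summands using $u=\frac{1}{r}$ together with the angular form of the law in Fact~\ref{fact-04}, namely $r^2\frac{\Dif\theta}{\Dif t}=C$. Rearranging the latter gives the identity $\frac{\Dif\theta}{\Dif t}=\frac{C}{r^2}=Cu^2$, and this is the tool that feeds the constant $C$ into the whole computation.

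First I would handle the angular summand, which is the easy half. Substituting $\frac{\Dif\theta}{\Dif t}=Cu^2$ and $r^2=\frac{1}{u^2}$ gives
$$r^2\left(\frac{\Dif\theta}{\Dif t}\right)^2 = \frac{1}{u^2}\cdot C^2u^4 = C^2u^2,$$
which is already the term $C^2u^2$ appearing inside the bracket of the claimed identity.

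Next I would handle the radial summand. Writing $r=\frac{1}{u}$ and applying the reciprocal rule (Fact~\ref{fact-05}) through the chain rule (Fact~\ref{fact-10}) gives $\frac{\Dif r}{\Dif t} = -\frac{1}{u^2}\frac{\Dif u}{\Dif t}$, hence
$$\left(\frac{\Dif r}{\Dif t}\right)^2 = \frac{1}{u^4}\left(\frac{\Dif u}{\Dif t}\right)^2.$$
To match the target term $C^2\left(\frac{\Dif u}{\Dif t}\right)^2$ one must invoke the law $r^2\frac{\Dif\theta}{\Dif t}=C$ once more, so that the kinematic factor $\frac{1}{u^4}=r^4$ is re-expressed through the constant $C$. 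Once both summands carry the common factor $C^2$, substituting them back into Fact~\ref{fact-03} and factoring yields
$$v^2 = C^2\left[\left(\frac{\Dif u}{\Dif t}\right)^2 + u^2\right],$$
as stated.

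The step I expect to be the main obstacle is precisely this reconciliation in the radial term: the direct geometric computation produces the factor $\frac{1}{u^4}$ in front of $\left(\frac{\Dif u}{\Dif t}\right)^2$, whereas the statement demands the constant $C^2$, so turning one into the other is the only nonroutine point and forces a careful second use of the Second Kepler's law $r^2\frac{\Dif\theta}{\Dif t}=C$. Everything else is a mechanical application of Facts~\ref{fact-05}, \ref{fact-10} and~\ref{fact-03}.
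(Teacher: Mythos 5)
Your setup coincides with the paper's (start from Fact~\ref{fact-03}, feed in Fact~\ref{fact-04}), and your angular summand $r^2\left(\frac{\Dif\theta}{\Dif t}\right)^2 = C^2u^2$ is correct. The genuine gap is in the radial summand, at exactly the step you flag as the main obstacle but never carry out — and as you describe it, it cannot be carried out. You propose to turn $\frac{1}{u^4}\left(\frac{\Dif u}{\Dif t}\right)^2$ into $C^2\left(\frac{\Dif u}{\Dif t}\right)^2$ by ``re-expressing'' the factor $\frac{1}{u^4}=r^4$ through the constant $C$. But $r^4$ is not constant along a non-circular orbit, and $r^2\frac{\Dif\theta}{\Dif t}=C$ yields no identity of the form $r^4=C^2$. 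Indeed, since the angular term is exactly $C^2u^2$, your target identity would force $\left(\frac{\Dif r}{\Dif t}\right)^2 = C^2\left(\frac{\Dif u}{\Dif t}\right)^2 = \frac{C^2}{r^4}\left(\frac{\Dif r}{\Dif t}\right)^2$, i.e. $r^4=C^2$ at every instant where $\frac{\Dif r}{\Dif t}\neq 0$ — impossible on a genuine ellipse, where $r$ varies. So the formula with the time derivative is actually false, and no second invocation of the Second Kepler's Law can rescue it.

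What closes the gap — and what the paper's proof does — is a change of the differentiation variable, not a re-expression of the coefficient. By the chain rule and Fact~\ref{fact-04},
$$\frac{\Dif r}{\Dif t} = \frac{\Dif r}{\Dif\theta}\cdot\frac{\Dif\theta}{\Dif t} = \frac{C}{r^2}\,\frac{\Dif r}{\Dif\theta},$$
so the radial summand becomes $C^2\left(\frac{1}{r^2}\,\frac{\Dif r}{\Dif\theta}\right)^2 = C^2\left(\frac{\Dif u}{\Dif\theta}\right)^2$, because $\frac{\Dif u}{\Dif\theta} = -\frac{1}{r^2}\,\frac{\Dif r}{\Dif\theta}$; the factor $\frac{C}{r^2}$ brought in by the substitution is precisely what cancels your $r^4$. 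The correct conclusion is
$$v^2 = C^2\left[\left(\frac{\Dif u}{\Dif\theta}\right)^2 + u^2\right],$$
with the derivative taken with respect to $\theta$, not $t$: the $\Dif t$ in the statement is a typo, as the paper's own later use confirms when it combines this fact with $v^2-\frac{2\mu}{r}=h$ to get $c^2\left(\frac{\Dif u}{\Dif\theta}\right)^2 = h+2\mu u-c^2u^2$. Your instinct that the radial term is the one nonroutine point was right; the resolution is the $t\to\theta$ substitution in the derivative, which your sketch never performs.
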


\newpage
\section{Results about Kepler's laws}

The main deal of this section is to show the equivalence of the
Newton's Laws and the Kepler's Laws. This demonstration will have
the pure nature os some pure mathematical computations. The ideas of these computations is taken from~\cite{Astr}.

Nevertheless, since the Newton's Laws contain the notion of the
force, we need some remark, how we will understand the Force from
the mathematical point of view, since there is not this notion in
mathematics.

We will consider the second Newton's Law as the definition of the
force. In other words, if the mass of some body is fixed (all
masses will be considered as fixed) and force $F$ acts on his
body, then acceleration $$ a = \frac{F}{m}
$$ will appear. %
We will understand any acceleration (calculated mathematically) as
the result of some force and controversially, is the mathematical
calculation will show us am acceleration, the we will interpret it
as the result of some force.

\subsection{Two-bodies problem}\label{sect-07}

Let two bodies with masses $m$ and $M$ (say Sun and a planet) move
such that the 4-th Newton's Law holds. In other words, denote
$r(t)$ the distance between these bodies and assume that there is
a force $$ F(t) = k\cdot \frac{mM}{r^2(t)},
$$ which acts to each of these body in the direction of another.
Clearly, this force (these forces) cause the acceleration
$\overrightarrow{a_S}$ of the Sun with absolute value $$
|\overrightarrow{a_S}| = k\cdot \frac{m}{r^2(t)}
$$
and the acceleration of the planet $\overrightarrow{a_P}$ with
absolute value
$$
|\overrightarrow{a_P}| = k\cdot \frac{M}{r^2(t)}.
$$

We will fix the Sun at the origin. This will mean that the speed
and the acceleration of the Sun will be $0$. It will give all
other points of the coordinate system the additional acceleration
$a_s$, directed to the Origin. Thus, the acceleration
$\overrightarrow{a}$ of the planet in ``the fixed'' coordinate
system will be $$ |\overrightarrow{a}| = |\overrightarrow{a_S}
-\overrightarrow{a_P}|.
$$ Since the directions of
$\overrightarrow{a_S}$ and $\overrightarrow{a_P}$ are opposite,
then $$ |a(t)| = k\cdot \frac{m}{r^2(t)} + k\cdot \frac{M}{r^2(t)}
= \frac{k(m+M)}{r^2(t)}.
$$ %
Denote $\mu = k(m+M)$ and we are ready now to formulate the
mathematical problem, which we will solve.\\

\textbf{\underline{Mathematical problem}}. \emph{Denote}
$$\overrightarrow{s}(t) = (x(t),\, y(t),\, z(t))$$ \emph{the position
vector of some point and denote} $$r(t) = \sqrt{x^2(t) + y^2(t)
+z^2(t)}.$$ \emph{Suppose that} %
\begin{equation}\label{eq-5-14} %
\frac{\Dif^2\overrightarrow{s}}{\Dif t} = -\,
\frac{\mu}{r^2(t)}\cdot
\frac{\overrightarrow{s}(t)}{|\overrightarrow{s}(t)|}.
\end{equation} \emph{We need to find (to describe) the function %
$\overrightarrow{s}(t)$. Precisely, we need to prove that
$\overrightarrow{s}(t)$ satisfies the Kepler's Laws}.\\

Notice, that the equation~\eqref{eq-5-14} means nothing more than
the acceleration $\overrightarrow{a}
=\frac{\Dif^2\overrightarrow{s}}{\Dif t}$ has the absolute value
$\frac{\mu}{r^2(t)}$ (we suppose $\mu$ to be positive) and the
multiplier $\frac{\overrightarrow{s}(t)}{|\overrightarrow{s}(t)|}$
together with minus-sign before the entire expression mans that
the direction of $\overrightarrow{a}$ is opposite to
$\overrightarrow{s}$, i.e. the acceleration is directed from the
position of the Planet to the origin.

We can rewrite~\eqref{eq-5-14} as
\begin{equation}\label{eq-5-15}
\begin{array}{lllllll}
\displaystyle{\frac{d^2x}{dt^2} = -\mu\, \frac{x}{r^3}}, & &
\displaystyle{\frac{d^2y}{dt^2} = -\mu\, \frac{y}{r^3}}, & &
\displaystyle{\frac{d^2z}{dt^2} = -\mu\, \frac{z}{r^3}}.
\end{array}
\end{equation}

The obtained~\eqref{eq-5-15} is called the ``second order system
of differential equations''. The name ``differential equation'' is
because the differentials (derivatives) are parts of equations of
this system. The order of a system of differential equations is
the maximal derivative, which appears in the equations (clearly,
this maximum here is 2, being, for instance, the order of the
derivative $\frac{d^2x}{dt^2}$, but the third or more order
derivative does not appear in the system of equations).

Notice, that tree equations of~\eqref{eq-5-15} are not
independent, because $r$, being the distance from the planet to
the origin, is dependent on $x,\, y,\, z$, precisely, for example
the first equation of~\eqref{eq-5-15} is, in fact, $$
\frac{d^2x}{dt^2} = \frac{-\mu\, x}{(\sqrt{x^2 + y^2 +z^2})^3}
$$ and in the same way other two equations can be rewritten.

There are no general rules of solving the systems of differential
equations. Students of mathematical faculties of universities
study the classical course, which is called differential equations
theory. They study there the methods of solving different types of
differential equations, or system of differential equations.
Clearly, specialists on differential equations consider the
equations, which are studied in the university course to by
``standard'' and ``the simplest''. Knowing of methods of solving
of ``standard differential equations'' is considered like a ``time
table'', which should be clear for everybody, who studies
differential equations. If a differential equation has no standard
method to solve, the it is necessary to combine the knowing
standard methods, or in any way use the talent or intuition.

The useful notion, which can help to solve a differential equation
is so called \textbf{the first integral}. Let one have a system of
differential equations with respect to unknown functions $x(t),\,
y(t)$ and $z(t)$ (as in our case). The first integral if some
functional expression, which contains the symbols for $t$, $x$,
$y$, $z$ and all the derivatives, such that the substitution of
the solution to this expression transforms it to zero. The
usefulness of the first integral is in that, it may help to make
some conclusions about the properties of the solution, without
finding the solutions explicitly. Clearly, more than one the first
integral can be necessary to find the final (explicit) solution of
the differential equation.

Let us start to find on of the first integral of~\eqref{eq-5-15}.
Multiply the second and third equation of~\eqref{eq-5-15} by $z$
and $y$ respectively and obtain $$
\begin{array}{lllll}
\displaystyle{z\, \frac{d^2y}{dt^2} = -\mu\, \frac{yz}{r^3},}&&
\displaystyle{y\, \frac{d^2z}{dt^2} = -\mu\, \frac{yz}{r^3}.}
\end{array}
$$
The substraction of these equations leads to
\begin{equation}\label{eq-15}y\, \frac{d^2z}{dt^2} - z\,
\frac{d^2y}{dt^2} =0.\end{equation}

Notice that
\begin{equation}\label{eq-16}\frac{d}{dt}\left( y\, \frac{dz}{dt} - z\,
\frac{dy}{dt}\right) = y\, \frac{d^2z}{dt^2} - z\,
\frac{d^2y}{dt^2},
\end{equation} because $$
\frac{d}{dt}\left( y\, \frac{dz}{dt} - z\, \frac{dy}{dt}\right) =
\left(\underline{\frac{dy}{dt}\, \frac{dz}{dt}} + y\,
\frac{d^2z}{dt^2}\right) -\left(\underline{\frac{dz}{dt}\,
\frac{dy}{dt}} + z\, \frac{d^2y}{dt^2}\right) = $$ $$=y\,
\frac{d^2z}{dt^2} - z\, \frac{d^2y}{dt^2}.
$$

Denote $ q(t) = y\, \frac{dz}{dt} - z\, \frac{dy}{dt},$ whence %
rewrite equalities~\eqref{eq-15} and~\eqref{eq-16} as $
\frac{dq}{dt} =0. $ This means that $q$ is a constant function,
i.e. there exists $A = \const$ such that $$ y\, \frac{dz}{dt} -
z\, \frac{dy}{dt} = A.
$$

Do the analogical transformations with another pairs od equations
of~\eqref{eq-5-15}. Multiply the first and the third equation by
$z$ and $x$, then subtract them and obtain $\frac{d}{dt}\left( z\,
\frac{dx}{dt} - x\, \frac{dz}{dt}\right)=0$. Similarly obtain
$\frac{d}{dt}\left( x\, \frac{dy}{dt} -y\, \frac{dx}{dt}\right)
=0$ from the first two equations of~\eqref{eq-5-15}.
Thus, %
\begin{equation}\label{eq-5-15a} \left\{
\begin{array}{l}
y\, \frac{dz}{dt} - z\, \frac{dy}{dt} = A,\\
z\, \frac{dx}{dt} - x\, \frac{dz}{dt} = B,\\
x\, \frac{dy}{dt} - y\, \frac{dx}{dt} = C,
\end{array}\right.
\end{equation} where $A,\, B,\, C$ are some constants.

Multiply equations of~\eqref{eq-5-15a} by $x$, $y$ and $z$
respectively, then add them and obtain \begin{equation}
\label{eq-17}Ax + By +Cz =0.
\end{equation} By Fact~\ref{fact-15}, This equation describes a
plain, i.e. the orbit of the planet is a plan curve and the Sun
(the Origin) belongs to the plain of this curve.

Notice ones more, that~\eqref{eq-17} is one more example of the
first integral of~\eqref{eq-5-15}.

Since the movement of the planet is plain, then we can simplify
the former system of equations~\eqref{eq-5-15}. Suppose that the
orbit belongs to the $XOY$-plain, whence $z(t)$ will become a
zero-function, and~\eqref{eq-5-15a} will be simplified as
$$ x\, \frac{dy}{dt} -y\, \frac{dx}{dt} = C_1,
$$ where $C_1$ is %
sone new constant, which is not necessary to be equal to $C$
from~\eqref{eq-5-15a}.

Notice, that the obtained equality is exactly the
\textbf{\underline{Second Kepler's Law}} from Fact~\ref{fact-19}.

We will find now one more first integral of~(\ref{eq-5-15}).
Multiply the first equality by $2\, \frac{dx}{dt}$ and the second
by $2\frac{dy}{dt}$. After the addition obtain
\begin{equation}\label{eq-18} %
2\left[ \frac{dx}{dt}\, \frac{d^2x}{dt^2} +\frac{dy}{dt}\,
\frac{dy}{dt^2}\right] = -\, \frac{2\mu}{r^3}\left[ x\frac{dx}{dt}
+y\, \frac{dy}{dt}\right].
\end{equation} Remind, that we have already obtained got that
$z=0$, thus~\eqref{eq-5-15} contains only two equations.

For the simplification of~\eqref{eq-18}, observe that
$$ \frac{\Dif
v^2}{\Dif t} = 2\frac{dx}{dt}\, \frac{d^2x}{dt^2} +
2\frac{dy}{dt}\, \frac{d^2y}{dt^2}
$$ and
$$
\frac{d}{dt}(r^2) = 2x\, \frac{dx}{dt} + 2y\, \frac{dy}{dt}.
$$

Indeed, $$%
v^2 = \left(\frac{dx}{dt}\right)^2 + \left(\frac{dy}{dt}\right)^2
$$ implies $$
\frac{dv^2}{dt} =
\frac{d}{dt}\left(\left(\frac{dx}{dt}\right)^2\right) +
\frac{d}{dt}\left(\left(\frac{dy}{dt}\right)^2\right) =
$$
$$
=2\frac{dx}{dt}\, \frac{d^2x}{dt^2} + 2\frac{dy}{dt}\,
\frac{d^2y}{dt^2}.
$$
From another hand, since
$$ r^2 =
x^2 + y^2,
$$ then $$
\frac{d}{dt}(r^2) = \frac{d}{dt}\left( x^2 +y^2 \right) = 2x\,
\frac{dx}{dt} + 2y\, \frac{dy}{dt}.
$$ Thus, the equation~\eqref{eq-18} is equivalent to
\begin{equation}\label{eq-13}
\frac{\Dif v^2}{\Dif t} = -\, \frac{2\mu}{r^3}\, \frac{\Dif
r^2}{\Dif t}.
\end{equation}

Moreover, notice that
$$-\frac{\mu}{r^3}\frac{d}{dt}(r^2) =
-\frac{\mu}{r^3}\cdot r\, \frac{dr}{dt} = -\frac{\mu}{r^2}\,
\frac{dr}{dt}$$ and
$$
\frac{d}{dt}\left(\frac{2\mu}{r}\right)= -\frac{\mu}{r^2}\,
\frac{dr}{dt}.
$$ Thus, we can rewrite~\eqref{eq-13} as
$$\frac{\Dif v^2}{\Dif t} = \frac{d}{dt}\left(\frac{2\mu}{r}\right),
$$ or
$$\frac{d}{dt}\left( v^2 -\frac{2\mu}{r}\right) =0. $$
Thus, there is a constant $h$ such that
\begin{equation}\label{eq-5-17} v^2 -\frac{2\mu}{r} =h.\end{equation}

We are ready now to obtain the equation of the movement of the
planet. We will make the further calculations in polar
coordinates.

By Fact~\ref{fact-17}, we can rewrite the Second Kepler's Law in
polar coordinates as
$$
v^2 = c^2\left[\left(\frac{\Dif u}{\Dif t}\right)^2 +u^2\right],
$$ where $c$ is some constant and $u = \frac{1}{r}$. This lets us
to rewrite~\eqref{eq-5-17} as
$$ c^2\left(
\frac{du}{d\theta}\right)^2 = h + 2\mu u - c^2u^2.
$$
This differential equation can be reduced to one of separate
variables as
\begin{equation}\label{eq-5-19}
d\theta = \pm\, \frac{d(cu)}{\sqrt{h + 2\mu u -c^2u^2}}.
\end{equation}
Keeping in mid the fact that $$ \int\frac{-\Dif x}{\sqrt{1-x^2}} =
\arccos x,
$$ transform the expression under the square root in the
denominator of~\eqref{eq-5-19} as $$ h + 2\mu u -c^2u^2 = \left(
\frac{\mu^2}{c^2} +h \right) - \left( cu -\frac{\mu}{c}\right)^2.
$$ Denote $q = \frac{\mu^2}{c^2} +h$ and $\xi =cu -\frac{\mu}{c}$,
notice that $\Dif \left(cu - \frac{mu}{c}\right) = \Dif \xi$,
whence rewrite~\eqref{eq-5-19} as $$ d\theta = \pm\,
\frac{d\xi}{\sqrt{q^2 -\xi^2}} = \pm\, \frac{d\left(
\frac{\xi}{q}\right)}{\sqrt{1-\left( \frac{\xi}{q}\right)^2}}.
$$ Then $$\pm\, \theta =
\arccos\left( \frac{\xi}{q}\right) +k.
$$ Whence $$
\frac{\xi}{q} = \pm\cos(\theta -k),
$$ or $$
\xi = \pm\,q\cos(\theta-k).
$$ Plug the previous expressions for $q$, $\xi$ and %
$u$ and obtain
$$
\frac{c}{r} - \frac{\mu}{c} = \pm\, \sqrt{\frac{\mu^2}{c^2}+h}\,
\cos(\theta-k),
$$ or $$
\frac{c}{r} = \frac{\mu}{c}\left[ 1 \pm\,
\sqrt{1+\frac{c^2h}{\mu^2}}\, \cos(\theta-k)\right].
$$ Resolve the obtained %
expression with respect to $r$ and obtain
\begin{equation}\label{eq-5-22}
r = \frac{\frac{c^2}{\mu}}{1 \pm\sqrt{1+\frac{c^2h}{\mu^2}}\,
\cos(\theta-k)}.
\end{equation}
Notice, that choose $\theta$ for $\theta+180^0$ means the choose
of the direction of the polar axis, whence without loos of
generality assume that we have ``--'' instead of $\pm$. Now denote
$p=\frac{c^2}{\mu}$, $e = \sqrt{1+\frac{c^2h}{\mu^2}}$ and $\nu
=\theta -k$ for obtain $$ r = \frac{p}{1- e\cos\nu},
$$ which is the known equation of the curve of the second order.

Since all the closed curves of the second order are ellipse,
hyperbola and parabola, claim that for the planets from the Solar
system trajectory is an ellipse. Clearly, that for other pair of
bodies (for example, the Sun and some comets) this trajectory can
be as hyperbola, as parabola too.

\subsection{Equivalence of the Third Kerpler's Law and the fourth
Newton's law}

\begin{fact}\label{fact-08}
Suppose that the Sun is fixed in the Origin and a planet moves by
the elliptic orbit with polar equation $$r = \frac{p}{1-e\,
\cos\theta}$$ and the equation
\begin{equation}\label{eq-56}
r^2\, \frac{\Dif \theta}{\Dif t} = C \end{equation} holds. Then
the acceleration of the planet is
\begin{equation}\label{eq-53}
\overrightarrow{a} = -\,
\frac{\overrightarrow{r}}{|\overrightarrow{r}|}\cdot
\frac{C^2}{p\, r^2}. \end{equation}
\end{fact}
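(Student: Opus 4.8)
The plan is to compute the acceleration in polar coordinates and read off that it is purely radial with the stated magnitude. First I would introduce the polar unit vectors $\hat r = (\cos\theta,\,\sin\theta)$ and $\hat\theta = (-\sin\theta,\,\cos\theta)$, so that $\overrightarrow{r} = r\,\hat r$. Differentiating these with respect to $t$ by the chain rule (Fact~\ref{fact-10}) together with the table of derivatives (Fact~\ref{fact-12}) gives $\frac{d\hat r}{dt} = \frac{d\theta}{dt}\,\hat\theta$ and $\frac{d\hat\theta}{dt} = -\frac{d\theta}{dt}\,\hat r$.

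Next I would differentiate $\overrightarrow{r} = r\,\hat r$ twice, using the product rule (Fact~\ref{fact-11}) and the relations just obtained. The first derivative is $\overrightarrow{v} = \frac{dr}{dt}\,\hat r + r\,\frac{d\theta}{dt}\,\hat\theta$, and a second differentiation collects the terms into
\[
\overrightarrow{a} = \left(\frac{d^2r}{dt^2} - r\left(\frac{d\theta}{dt}\right)^2\right)\hat r + \frac{1}{r}\,\frac{d}{dt}\!\left(r^2\,\frac{d\theta}{dt}\right)\hat\theta.
\]
The transverse coefficient $\frac{1}{r}\frac{d}{dt}\!\left(r^2\frac{d\theta}{dt}\right)$ vanishes identically, because $r^2\frac{d\theta}{dt} = C$ is constant by~\eqref{eq-56} (the Second Kepler's Law). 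Hence $\overrightarrow{a}$ points along $\hat r = \frac{\overrightarrow{r}}{|\overrightarrow{r}|}$, and it remains only to evaluate its radial component $\frac{d^2r}{dt^2} - r\left(\frac{d\theta}{dt}\right)^2$.

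For this I would pass to $u = \frac{1}{r}$ and trade $t$-derivatives for $\theta$-derivatives. From $r^2\frac{d\theta}{dt} = C$ we get $\frac{d\theta}{dt} = Cu^2$, whence $\frac{dr}{dt} = \frac{dr}{d\theta}\frac{d\theta}{dt} = -C\frac{du}{d\theta}$ (using $\frac{dr}{d\theta} = -\frac{1}{u^2}\frac{du}{d\theta}$ from Fact~\ref{fact-05}), and differentiating once more gives $\frac{d^2r}{dt^2} = -C^2u^2\frac{d^2u}{d\theta^2}$. Substituting, the radial component becomes
\[
\frac{d^2r}{dt^2} - r\left(\frac{d\theta}{dt}\right)^2 = -C^2u^2\left(\frac{d^2u}{d\theta^2} + u\right).
\]
Then I would insert the orbit equation, which in terms of $u$ reads $u = \frac{1 - e\cos\theta}{p}$; two differentiations give $\frac{d^2u}{d\theta^2} = \frac{e}{p}\cos\theta$, so $\frac{d^2u}{d\theta^2} + u = \frac{1}{p}$. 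Therefore the radial component equals $-\frac{C^2u^2}{p} = -\frac{C^2}{p\,r^2}$, and collecting signs yields exactly~\eqref{eq-53}.

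The main obstacle is not conceptual but the careful bookkeeping of the two changes of variable: deriving the polar decomposition of $\overrightarrow{a}$ with correct signs, and then converting $\frac{d^2r}{dt^2}$ into a $\theta$-derivative of $u$ without dropping a factor of $C$ or of $u^2$. The two genuinely substantive steps are the places where the hypotheses enter: the constancy of $r^2\frac{d\theta}{dt}$ forces the transverse part to vanish, making $\overrightarrow{a}$ radial, and the specific conic form $u = \frac{1-e\cos\theta}{p}$ makes $\frac{d^2u}{d\theta^2} + u$ equal to the constant $\frac{1}{p}$, which is precisely what produces the inverse-square dependence on $r$.
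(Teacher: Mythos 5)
Your proof is correct, but it takes a genuinely different route from the paper's. The paper works entirely in Cartesian components: it differentiates $x = r\cos\theta$, $y = r\sin\theta$ twice, computes $r'$, $r''$ and $\theta''$ explicitly from the orbit equation together with $\theta' = \frac{C}{r^2}$, and substitutes everything into the resulting expressions for $x''$ and $y''$, grinding through a sizable trigonometric simplification until the terms collapse to $x'' = -\frac{C^2\cos\theta}{p\,r^2}$ and $y'' = -\frac{C^2\sin\theta}{p\,r^2}$. You instead decompose $\overrightarrow{a}$ along the moving frame $\hat r$, $\hat\theta$ and make the Binet substitution $u = \frac{1}{r}$. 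This buys two pieces of structure the paper's computation obscures: the vanishing of the transverse component $\frac{1}{r}\frac{d}{dt}\left(r^2\frac{d\theta}{dt}\right)$ uses only hypothesis~\eqref{eq-56}, so the radiality of the acceleration is visibly independent of the shape of the orbit; and the orbit enters only through the one-line identity $\frac{d^2u}{d\theta^2} + u = \frac{1}{p}$, which holds verbatim for any conic $u = \frac{1-e\cos\theta}{p}$, not just an ellipse, so your argument generalizes at no extra cost. The price is the extra machinery of the rotating unit vectors and their derivatives, whereas the paper's brute-force route uses nothing beyond the chain and product rules already set up in Facts~\ref{fact-10} and~\ref{fact-11} and the table derivatives of Fact~\ref{fact-12}, in keeping with the book's pedagogical constraints. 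Your intermediate formulas all check out: $\frac{d\hat r}{dt} = \frac{d\theta}{dt}\,\hat\theta$, $\frac{d\hat\theta}{dt} = -\frac{d\theta}{dt}\,\hat r$, $\frac{dr}{dt} = -C\frac{du}{d\theta}$, $\frac{d^2r}{dt^2} = -C^2u^2\frac{d^2u}{d\theta^2}$, and the radial component $-\frac{C^2u^2}{p} = -\frac{C^2}{p\,r^2}$ agrees exactly with~\eqref{eq-53}.
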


The proof of this fact is simple, but technical. The idea of the
proof is the following. Since we have polar coordinates, we have
equalities \begin{equation}\label{eq-57}\left\{ \begin{array}{l}x = r\cos \theta;\\
y = r\sin\theta,\end{array}\right. \end{equation} where both
$\theta$ and $r$ are unknown functions, dependent on $t$.
Formula~\eqref{eq-56} lets us to express $x$ and $y$ in terms of
$\theta$ as $$ x(t) = r(t)\cos\theta(t) = \frac{p\,
\cos\theta(t)}{1-e\sin\theta(t)}
$$ and analogously for $y$.

By Fact~\ref{fact-09} we need to find the second derivative of the
position function $s(t) = (x(t),\, y(t))$. We will use
Fact~\ref{fact-01}, which means that we will find separately the
derivatives $x''(t)$ and $y''(t)$, saying that $a(t) = (x''(t),\,
y''(t))$.

We can find the derivative $\theta'(t)$ from~\eqref{eq-56},
because, in fact, this equation is nothing, but this derivative.
We will use this derivative together with the Chain rule
(Fact~\ref{fact-10}), Product rule (Fact~\ref{fact-11}) and some
table derivatives (Fact~\ref{fact-12}).

The proof of Fact~\ref{fact-08} in details is given at
Section~\ref{sect-04}.

Fact~\ref{fact-08} has an important corollary. The proof of this
corollary uses only Fact~\ref{fact-08} and some technical
calculations with parameters of an ellipse, which were introduced
in Section~\ref{sect-05}.

\begin{fact}\label{fact-13}
Suppose that the planets moves such that Newton's the First and
the Second Kepler's laws hold. Then the Third Kepler's law is
equivalent to the existence of the constant $k$ (which is the same
for all planets) such that
\begin{equation}\label{eq-55}|\overrightarrow{a}| =
\frac{k}{r^2},\end{equation} where $a$ is the acceleration of a
planet and $r$ is its distance to the Sun.
\end{fact}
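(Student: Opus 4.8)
The plan is to use Fact~\ref{fact-08} to turn the two orbital hypotheses straight into an inverse-square law for the acceleration, and then to rewrite its proportionality constant purely in terms of the orbital period and the semi-major axis, so that the universality of that constant becomes literally the statement of the Third Kepler's Law.

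First I would observe that the First and Second Kepler's laws are exactly the hypotheses of Fact~\ref{fact-08}: the First law gives the elliptic orbit with the Sun at a focus, $r = p/(1-e\cos\theta)$, and the Second law is the areal relation $r^2\,\frac{\Dif\theta}{\Dif t} = C$ (Fact~\ref{fact-04}). Hence Fact~\ref{fact-08} applies and yields
$$|\overrightarrow{a}| = \frac{C^2}{p\,r^2}.$$
So \emph{for each single planet} the acceleration already has the form $(\text{const})/r^2$, with the planet-dependent quantity $C^2/p$. The entire statement therefore reduces to deciding exactly when $C^2/p$ is one and the same number for every planet.

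Next I would express $C^2/p$ through the geometric data of the ellipse. By Fact~\ref{fact-19} the areal constant is $C = A/T$, and by Fact~\ref{fact-22} the area is $A = \pi a b$ with $a,b$ the semi-major and semi-minor axes, so $C^2 = \pi^2 a^2 b^2/T^2$. The semi-latus rectum is $p = b^2/a$ by Fact~\ref{fact-21}. Substituting and cancelling $b^2$,
$$\frac{C^2}{p} = \frac{\pi^2 a^2 b^2/T^2}{b^2/a} = \frac{\pi^2 a^3}{T^2},$$
whence
$$|\overrightarrow{a}| = \frac{\pi^2 a^3}{T^2}\cdot\frac{1}{r^2}.$$
The cancellation of the semi-minor axis is the one step that actually makes the argument work.

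Finally I would read off the equivalence. The coefficient $k = C^2/p = \pi^2 a^3/T^2$ is independent of the planet if and only if $a^3/T^2$ takes the same value for all planets, i.e.\ $T^2 = (\pi^2/k)\,a^3$; and this is precisely the assertion that the square of the period is proportional to the cube of the semi-major axis, namely the Third Kepler's Law. Conversely, if the Third law holds then $a^3/T^2$ is a fixed constant across planets, forcing $k$ to be universal, which completes the equivalence. I do not anticipate a real obstacle: all the analytic content is already packed into Fact~\ref{fact-08}, and the remaining work is the parameter bookkeeping of the middle step. The only point demanding care is the notational collision between the semi-major axis and the magnitude of the acceleration, both written with the letter $a$ in the paper, which must be kept distinct throughout the computation.
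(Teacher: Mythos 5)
Your proposal is correct and takes essentially the same route as the paper: both rest on Fact~\ref{fact-08} and the bookkeeping $C=A/T$, $A=\pi ab$, $p=b^2/a$ to obtain $k=C^2/p=\pi^2 a^3/T^2$, so that universality of $k$ is literally the Third law. Your version merely merges the paper's two separate directions into one computation read off as an equivalence, and your warning about the clash between the semi-major axis $a$ and $|\overrightarrow{a}|$ is well taken (the paper's converse even contains a couple of $T^2$/$C^2$ slips of exactly this kind).
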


\begin{proof}
Suppose that the Third Kepler's law holds, i.e. there exists a
constant $d$ (which is the same for all planets) such that
\begin{equation}\label{eq-51}d = \frac{a^3}{T^2},\end{equation}
where $a$ is the half of the bigger diameter of the ellipse, which
is the trajectory of the planet (we will use notations from
Section~\ref{sect-05}). By Facts~\ref{fact-21} and~\ref{fact-22},
i.e. using formulas $$A = \pi\, ab $$ and $$ p = \frac{b^2}{a},$$
we can rewrite~\eqref{eq-51} as
$$ \pi^2\, d = \frac{\pi^2 a^3}{T^2} = \frac{\pi^2
a^2b^2}{T^2}\cdot \frac{a}{b^2} = \frac{A^2}{p\, T^2}.$$ Next, by
Fact~\ref{fact-19} we have $A = CT$ (where $C$ is taken from the
Second Kepler's Law, i.e. it is the same $C$, which is in the
formulation of Fact~\ref{fact-08}). Thus, using
Fact~\ref{fact-08}, calculate the acceleration~\eqref{eq-53} as $$
|\overrightarrow{a}| = \frac{C^2}{p\, r^2} = \frac{A^2}{p\,
r^2T^2} = \frac{\pi^2\, d}{r^2}.
$$ Now denote $k=\pi^2\, d$ and~\eqref{eq-55} follows.

Conversely, suppose~\eqref{eq-55}. By Fact~\ref{fact-08} write $$
|\overrightarrow{a}|= \frac{k}{r^2} = \frac{C^2}{p\, r^2}.
$$ Using Fact~\ref{fact-22} %
rewrite the obtained equality as $$
k = \frac{A^2}{p\, C^2}
$$
Express, as in the fist part of the proof, $A$ and $p$ by
Facts~\ref{fact-21} and~\ref{fact-22} obtain $$ k = \frac{\pi^2\,
a^2b^2}{C^2}\cdot \frac{a}{b^2} = \frac{\pi^2\, a^3}{T^2}.
$$ Denote $$d = \frac{k}{\pi^2}$$ and we are done.
\end{proof}

\newpage

\section{Analytic representation of the movement}

Suppose that we know, that a planet moves according to Kepler's
Laws. In this section we will use mathematical tools to specify
some properties of the movement of this planet.

Since we know that the trajectory of the planet is an ellipse, we
can write its polar equation as \begin{equation}\label{eq-59}r(t)
= \frac{p}{1-\varepsilon\cos \theta(t)}.
\end{equation}
Thus the function $\theta = \theta(t)$ define the law of the
movement of our planet by its orbit. This function can be found
from the Second Kepler's Law, written as in Fact~\ref{fact-04},
i.e.
$$ r^2\, \frac{\Dif \theta}{\Dif t} =
C.
$$ We can use this and the above equation of the ellipse to
express the derivative of $\theta$ as
\begin{equation}\label{eq-43}
\theta' =\frac{C}{p^2}\, (1-\varepsilon\cos \theta)^2,
\end{equation} whence rewrite the Second Kepler's Law as
\begin{equation}\label{eq-60}%
\frac{\Dif \theta}{(1-\varepsilon\cos \theta)^2}
=\frac{C}{p^2}\, \Dif t.
\end{equation}
This is the differential equation of one of the simplest form,
which is called differential equation with separated variables.
Using quite standard techniques from the mathematical analysis, we
can solve this it and obtain the evident formulas, which will
express $t$ in terms of $\theta$.

Also we can use these formulas (after we will obtain them) for
plotting (and understanding) some properties of the movement of
planets by their orbits.

\subsection{The parametric equations %
of the motion of a planet around the sun %
and their corollaries}\label{sect-06}

We will find the expression for $\theta(t)$ from the
equation~\eqref{eq-60} and this will give the law of the motion of
the planet on its orbit.

Denote
\begin{equation}\label{eq-42} \Theta(\theta) = \frac{1}{(1-\varepsilon\cos
\theta)^2}.\end{equation}

According to Fact~\ref{fact-16}, we can write the solution
of~\eqref{eq-60} as $$ \int \Theta(\theta)\Dif\theta -
\int\frac{C}{p^2}\Dif t = \mathcal{T}(t),
$$ where $\mathcal{T}'(t) =0$ for all $t$.

Denote $I(\theta)$ an infinite integral of $\Theta(\theta)$, i.e.
$I(\theta)$ is a function such that $I'(\theta) =\Theta(\theta)$.
By Newton-Leibnitz Theorem we can determine $I(\theta)$ as
\begin{equation}\label{eq-63}%
I(\theta) = \int\limits_0^\theta \frac{\Dif x}{(1-\varepsilon \cos
x)^2}. \end{equation} This implies that the solution
of~\eqref{eq-60} is
\begin{equation}\label{eq-68}\int\limits_0^\theta \frac{\Dif
x}{(1-\varepsilon \cos x)^2} = \frac{C}{p^2}\cdot t
+\mathcal{T}(t), \end{equation} where $\mathcal{T}'(t) =0$ for all
$t$.\\

Equation~\eqref{eq-68} can be considered as law, which lets to
find an polar angle $\theta$ of the planet at the time $t$. The
value $\frac{C}{p^2}$ depends on ``how quickly the time passes'',
i.e. whether we measure time in seconds, hours, years etc.

\begin{figure}[p]
\begin{minipage}[h]{0.45\linewidth}
\center{\includegraphics[width=7.5cm]{./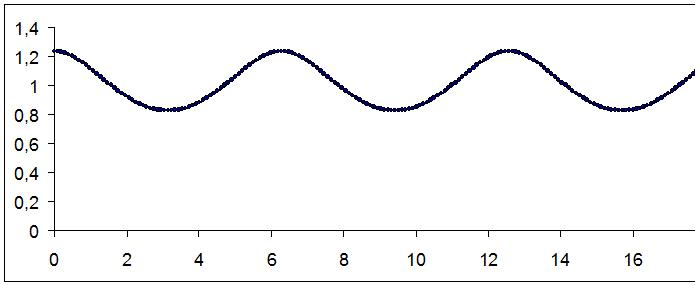}}\\
\centerline{a. $\varepsilon = 0,1$}
\end{minipage}
\begin{minipage}[h]{0.45\linewidth}
\center{\includegraphics[width=7.5cm]{./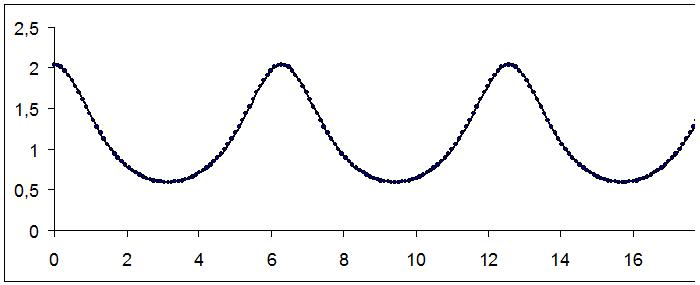}}\\%
\centerline{b. $\varepsilon = 0,3$}%
\end{minipage}
\caption{Graph of %
$\Theta$ of the form~\eqref{eq-42} }\label{fig-16}
\end{figure}

\begin{figure}[p]
\begin{minipage}[h]{0.45\linewidth}
\center{\includegraphics[width=7.5cm]{./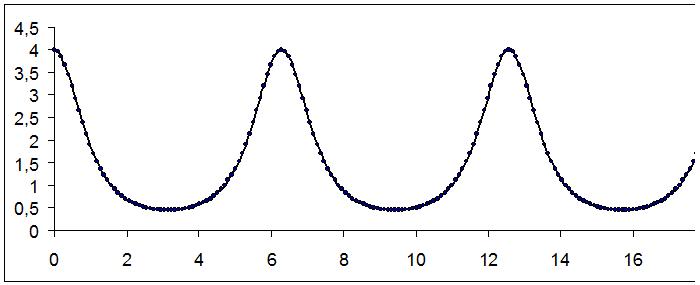}}\\
\centerline{a. $\varepsilon = 0,5$}
\end{minipage}
\begin{minipage}[h]{0.45\linewidth}
\center{\includegraphics[width=7.5cm]{./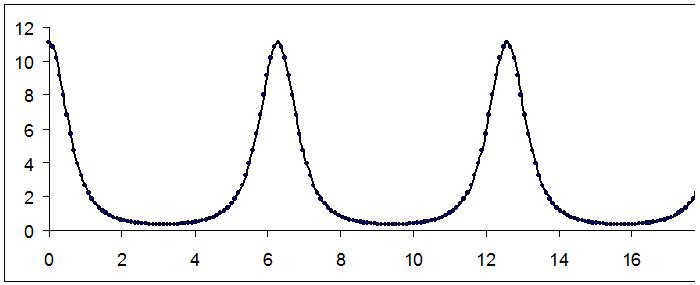}}\\%
\centerline{b. $\varepsilon = 0,7$}%
\end{minipage}
\caption{Graph of %
$\Theta$ of the form~\eqref{eq-42}}\label{fig-17}
\end{figure}

\begin{figure}[p]
\center{\includegraphics[width=7.5cm]{./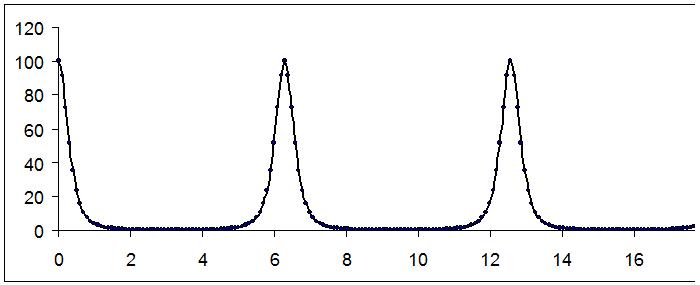}}
\caption{Graph of %
$\Theta$ of the form~\eqref{eq-42} for $\varepsilon =
0,9$}\label{fig-18}
\end{figure}

The graphs of the function $\Theta(\theta)$ for $\theta\in [0,\,
3\pi]$ are given at Figures~\ref{fig-16}, \ref{fig-17}
and~\ref{fig-18}.

The explicit formula for the $I(\theta)$ of the form~\eqref{eq-63}
can be found with the use of the following fact.

\begin{fact}\label{fact-14}
Suppose that $\varepsilon\in (0,\, 1)$. Then $$\int \frac{\Dif
x}{(1-\varepsilon \cos x)^2} = 
\frac{\sqrt{1-\varepsilon}}{\sqrt{1+\varepsilon}}\cdot
\frac{2}{(1-\varepsilon^2)(1-\varepsilon)} \left( \arctan t
 + \frac{t\, \varepsilon}{t^2+1} \right),
$$
where $t = \tan\left( \frac{x}{2}\cdot
\frac{\sqrt{1+\varepsilon}}{\sqrt{1-\varepsilon}}\right).$
\end{fact}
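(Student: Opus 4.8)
The plan is to evaluate the integral by the Weierstrass half-angle substitution, which turns the trigonometric integrand into a rational function, and then to reduce that rational function to elementary pieces. First I would set $u = \tan\frac{x}{2}$, so that $\cos x = \frac{1-u^2}{1+u^2}$ and $\Dif x = \frac{2\,\Dif u}{1+u^2}$. A short computation gives $1-\varepsilon\cos x = \frac{(1-\varepsilon)+(1+\varepsilon)u^2}{1+u^2}$, so that after cancelling one factor of $1+u^2$ the integral becomes
$$\int \frac{\Dif x}{(1-\varepsilon\cos x)^2} = \int \frac{2(1+u^2)}{\big[(1-\varepsilon)+(1+\varepsilon)u^2\big]^2}\,\Dif u.$$

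Next I would normalise the denominator by the linear rescaling $t = \sqrt{\tfrac{1+\varepsilon}{1-\varepsilon}}\,u$; in terms of $x$ this reads $t = \frac{\sqrt{1+\varepsilon}}{\sqrt{1-\varepsilon}}\tan\frac{x}{2}$, the substitution $t$ appearing in the statement (with the scaling factor multiplying $\tan\frac{x}{2}$). Under this change one has $(1-\varepsilon)+(1+\varepsilon)u^2 = (1-\varepsilon)(1+t^2)$, and after collecting the Jacobian factors the integral reduces to
$$\frac{2}{(1-\varepsilon)^2}\sqrt{\frac{1-\varepsilon}{1+\varepsilon}}\int \frac{1+\alpha t^2}{(1+t^2)^2}\,\Dif t, \qquad \alpha = \frac{1-\varepsilon}{1+\varepsilon}.$$

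To evaluate the remaining rational integral I would split the numerator as $1+\alpha t^2 = \alpha(1+t^2) + (1-\alpha)$, so that
$$\int \frac{1+\alpha t^2}{(1+t^2)^2}\,\Dif t = \alpha\int\frac{\Dif t}{1+t^2} + (1-\alpha)\int\frac{\Dif t}{(1+t^2)^2}.$$
Here the first integral is $\arctan t$, and the second is the standard reduction $\int \frac{\Dif t}{(1+t^2)^2} = \frac{1}{2}\big(\arctan t + \frac{t}{1+t^2}\big)$, which one verifies at once by differentiating the right-hand side (or via a single integration by parts). Combining these, the bracket collapses to $\frac{1+\alpha}{2}\arctan t + \frac{1-\alpha}{2}\cdot\frac{t}{1+t^2}$; substituting $\alpha = \frac{1-\varepsilon}{1+\varepsilon}$ gives $\frac{1+\alpha}{2} = \frac{1}{1+\varepsilon}$ and $\frac{1-\alpha}{2} = \frac{\varepsilon}{1+\varepsilon}$, hence the factor $\frac{1}{1+\varepsilon}\big(\arctan t + \frac{\varepsilon t}{1+t^2}\big)$.

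Finally I would gather the constants: multiplying the prefactor $\frac{2}{(1-\varepsilon)^2}\sqrt{\frac{1-\varepsilon}{1+\varepsilon}}$ by $\frac{1}{1+\varepsilon}$ and using the identity $(1-\varepsilon)^2(1+\varepsilon) = (1-\varepsilon^2)(1-\varepsilon)$ reproduces exactly the coefficient $\frac{\sqrt{1-\varepsilon}}{\sqrt{1+\varepsilon}}\cdot\frac{2}{(1-\varepsilon^2)(1-\varepsilon)}$ of the statement, which completes the proof. The only genuinely non-routine ingredient is the reduction integral $\int \frac{\Dif t}{(1+t^2)^2}$; everything else is bookkeeping of the substitution constants, and the main danger is an arithmetic slip in tracking the factors of $1\pm\varepsilon$.
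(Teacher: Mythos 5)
Your proof is correct and follows essentially the same route as the paper's: the Weierstrass substitution $u=\tan\frac{x}{2}$, the linear rescaling $t=\sqrt{\tfrac{1+\varepsilon}{1-\varepsilon}}\,u$, and reduction to the standard integral $\int\frac{\Dif t}{(1+t^2)^2}$; your single splitting $1+\alpha t^2=\alpha(1+t^2)+(1-\alpha)$ is a marginally leaner version of the paper's decomposition into its Facts~\ref{fact-06} and~\ref{fact-07}. Your parenthetical remark is also well taken, and it is the statement that is at fault: the correct substitution is $t=\frac{\sqrt{1+\varepsilon}}{\sqrt{1-\varepsilon}}\tan\frac{x}{2}$, with the scaling factor multiplying the tangent, whereas Fact~\ref{fact-14} as printed (and the final line of the paper's own proof, which silently identifies $\frac{\sqrt{1+\varepsilon}}{\sqrt{1-\varepsilon}}\tan\frac{\theta}{2}$ with $\tan\bigl(\frac{\theta}{2}\cdot\frac{\sqrt{1+\varepsilon}}{\sqrt{1-\varepsilon}}\bigr)$) incorrectly places the factor inside the tangent's argument.
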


We prove Fact~\ref{fact-14} in Section~\ref{sect-4}.

Using Fact~\ref{fact-14}, we can write the solution~\eqref{eq-68}
of~\eqref{eq-60} as
\begin{equation}\label{eq-41}\frac{C}{p^2}\, t = %
\mathcal{I}(\theta) +\mathcal{T}(t),\end{equation} where
\begin{equation}\label{eq-64}
\begin{array}{rl}
\mathcal{I}(\theta)
=&\frac{\sqrt{1-\varepsilon}}{\sqrt{1+\varepsilon}}\cdot
\frac{2}{(1-\varepsilon^2)(1-\varepsilon)}\, \times \\
&\times\left( \arctan \left( \tan\left( \frac{\theta}{2}\cdot
\frac{\sqrt{1+\varepsilon}}{\sqrt{1-\varepsilon}}\right) \right)
 + \frac{\varepsilon \cdot \tan\left( \frac{\theta}{2}\cdot
\frac{\sqrt{1+\varepsilon}}{\sqrt{1-\varepsilon}}\right)}{\tan^2\left(
\frac{\theta}{2}\cdot
\frac{\sqrt{1+\varepsilon}}{\sqrt{1-\varepsilon}}\right) +1}
\right)\end{array} \end{equation} and $$ \mathcal{T}'(t)=0
$$ for all $t$.

This formula lets to plot (clearly, by some computer techniques,
not manually) the graph of the function $t = t(\theta)$, i.e. plot
the dependence of time and the the polar angle of the planet.
Remind ones more, that the multiplier $\frac{C}{p^2}$ determines
``how quick the time passes'' and an be ignored (can be supposed
to be equal 1) in our computations.

The graph of the equation~\eqref{eq-41} is given at
pictures~\ref{fig-02}b and~\ref{fig-03}b (for $\varepsilon = 0,1$
and $\varepsilon = 0,3$ respectively).

\begin{figure}[h]
\begin{minipage}[h]{0.45\linewidth}
\center{\includegraphics[width=7.5cm]{./1-2.jpg}}\\
\centerline{a. Graph of %
$\Theta$ of the form~\eqref{eq-42}}
\end{minipage}
\begin{minipage}[h]{0.45\linewidth}
\center{\includegraphics[width=7.5cm]{./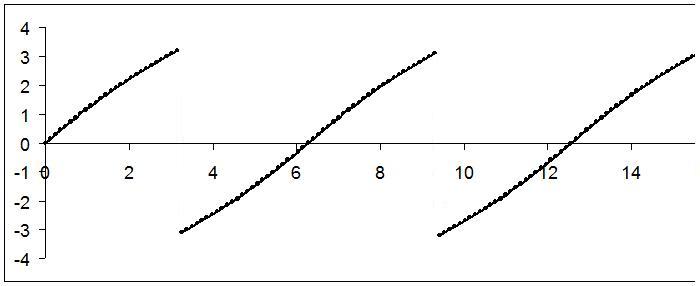}}\\%
\centerline{b. Graph of $I(\theta)$
of the form~\eqref{eq-63}}%
\end{minipage}
\caption{$\varepsilon = 0,1$}\label{fig-02}
\end{figure}

\begin{figure}[h]
\begin{minipage}[h]{0.45\linewidth}
\center{\includegraphics[width=7.5cm]{./3-2.jpg}}\\
\centerline{a. Graph of %
$\Theta$ of the form~\eqref{eq-42}}
\end{minipage}
\begin{minipage}[h]{0.45\linewidth}
\center{\includegraphics[width=7.5cm]{./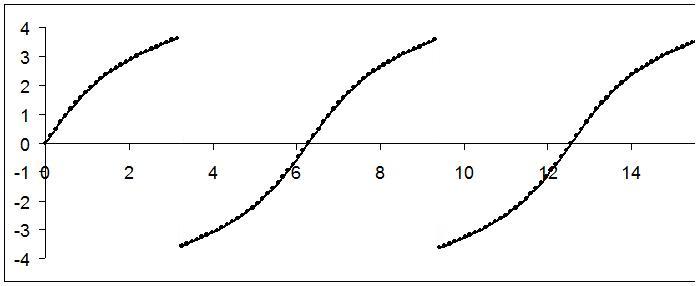}}\\%
\centerline{b. Graph of $I(\theta)$
of the form~\eqref{eq-63}}%
\end{minipage}
\caption{$\varepsilon = 0,3$}\label{fig-03}
\end{figure}

It is clear from the graphs, that the functions, which are plotted
at Figures~\ref{fig-02}b and~\ref{fig-03}b are discontinuous,
whence they can not be areas under the graphs of functions from
Figures~\ref{fig-02}a and~\ref{fig-03}a respectively.

The fact that $\mathcal{T}$ from~\eqref{eq-41} is piecewise
constant, means that we may arbitrary move in vertical direction
the parts of the graphs from Figures~\ref{fig-02}b
and~\ref{fig-03}b, obtaining the function, which also be an
integral of $\Theta$. Since we a looking for the certain area
$I(\theta) = \int\limits_0^\theta \Theta(x)\Dif x$, then we need
the integral $\widetilde{\mathcal{I}}(\theta)$ of $\Theta$ with
the following properties:

1. $\widetilde{\mathcal{I}}(0) = 0$, because we calculate the area
in the interval $[0,\, \theta]$, whence the area from $0$ to $0$
should be $0$.

2. The integral has to be continuous, because the former function
$\Theta$ is continuous, whence the area under is is also
continuous.

The correspond transformation with $\mathcal{I}(\theta)$ of the
form~\eqref{eq-64} for $\varepsilon =0.3$ (i.e. in fact with the
graph from Fig.~\ref{fig-03}b) are given at Fig~\ref{fig-07}.

\begin{figure}[p]
\begin{center}
\center{\includegraphics[width=7.5cm]{./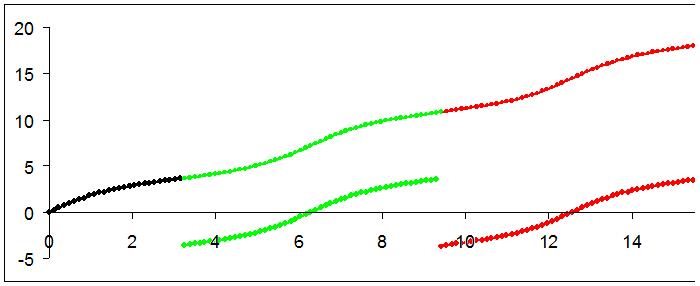}}\\
\end{center}
\caption{Correct graph of $I(\theta)$ of the form~\eqref{eq-63}
for $\varepsilon = 0.3$}\label{fig-07}
\end{figure}

\begin{figure}[p]
\begin{minipage}[h]{0.45\linewidth}
\center{\includegraphics[width=7.5cm]{./5-2.jpg}}\\
\centerline{a. Graph of %
$\Theta$ of the form~\eqref{eq-42}}
\end{minipage}
\begin{minipage}[h]{0.45\linewidth}
\center{\includegraphics[width=7.5cm]{./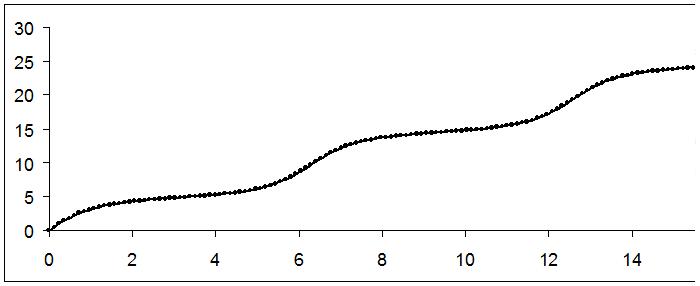}}\\%
\centerline{b. Graph of $I(\theta)$
of the form~\eqref{eq-63}}%
\end{minipage}
\caption{$\varepsilon = 0,5$}\label{fig-04}
\end{figure}

\begin{figure}[p]
\begin{minipage}[h]{0.45\linewidth}
\center{\includegraphics[width=7.5cm]{./7-2.jpg}}\\
\centerline{a. Graph of %
$\Theta$ of the form~\eqref{eq-42}}
\end{minipage}
\begin{minipage}[h]{0.45\linewidth}
\center{\includegraphics[width=7.5cm]{./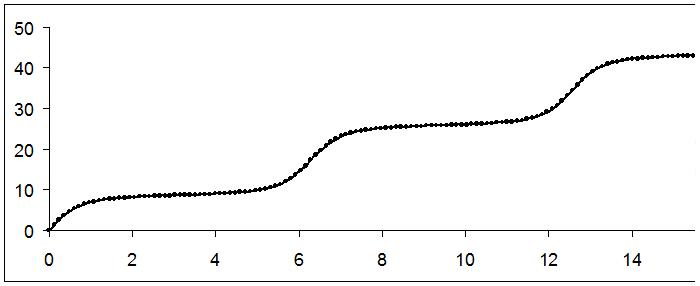}}\\%
\centerline{b. Graph of $I(\theta)$
of the form~\eqref{eq-63}}%
\end{minipage}
\caption{$\varepsilon = 0,7$}\label{fig-05}
\end{figure}

\begin{figure}[p]
\begin{minipage}[h]{0.45\linewidth}
\center{\includegraphics[width=7.5cm]{./9-2.jpg}}\\
\centerline{a. Graph of %
$\Theta$ of the form~\eqref{eq-42}}
\end{minipage}
\begin{minipage}[h]{0.45\linewidth}
\center{\includegraphics[width=7.5cm]{./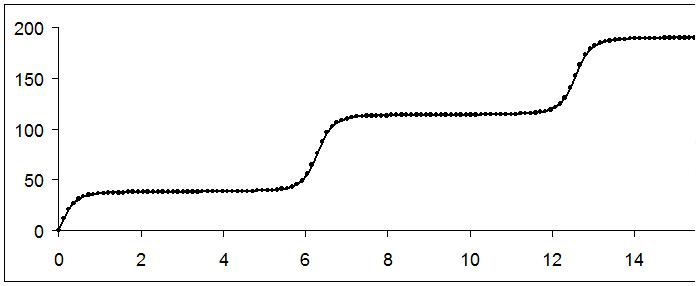}}\\%
\centerline{b. Graph of $I(\theta)$
of the form~\eqref{eq-63}}%
\end{minipage}
\caption{$\varepsilon = 0,9$}\label{fig-06}
\end{figure}

The graphs of $\Theta(x)$ of the form~\eqref{eq-42} and areas
under these graphs, i.e. functions $I(\theta)$ of the
form~\eqref{eq-63} for different $\varepsilon$ are given at
Figs.~\ref{fig-04}, \ref{fig-05} and~\ref{fig-06}.

\subsection{The speed of the movement}

We have seen in Section~\ref{sect-06} that the law of the
dependence of $\theta$ on $t$ (especially in the case when
$\varepsilon \approx 1$) can be complicated. In the same way we
can establish the dependence of the speed of the planet on time.
Using Fact~\ref{fact-03} write $$v^2 = \left(\frac{\Dif r}{\Dif
t}\right)^2 +r^2 \left(\frac{\Dif \theta}{\Dif t}\right)^2.$$ If
follows from~\eqref{eq-59} by chain rule that
$$ r'(t)=
\frac{-p\, \varepsilon\sin\theta}{(1-\varepsilon\cos\theta)^2}\,
\theta'(t).
$$ Thus, using~\eqref{eq-43}, obtain
$$v^2 = \left( %
\left(\frac{-p\, \varepsilon\sin\theta}%
{(1-\varepsilon\cos\theta)^2}\right)^2 + r^2\right)\, \theta'^2 =
$$$$
= \left( %
\left(\frac{-p\, \varepsilon\sin\theta}%
{(1-\varepsilon\cos\theta)^2}\right)^2 + \left(
\frac{p}{1-\varepsilon\cos \theta(t)}\right)^2\right) \cdot
\frac{C^2}{p^4}\, (1-\varepsilon\cos \theta)^4 =
$$$$
= \left(\left(\frac{\varepsilon\sin\theta}%
{1-\varepsilon\cos\theta}\right)^2 + 1 \right)\cdot
\frac{C^2}{p^2}\cdot (1-\varepsilon\cos\theta)^2.
$$

Graphs of the speed in the assumption $\frac{C}{p}=1$, dependent
on angle, i.e. graphs of the equations $$
|\overrightarrow{v}| = \sqrt{\left(\left(\frac{\varepsilon\sin\theta}%
{1-\varepsilon\cos\theta}\right)^2 + 1 \right)\cdot
(1-\varepsilon\cos\theta)^2}
$$
are given on Figures~\ref{fig-08}, \ref{fig-09} and~\ref{fig-10}
for $\theta\in [0,\, 2\pi]$. These graphs show, how many times the
biggest speed is greater the smallest, dependently on
$\varepsilon$.

Notice, that we have obtained in Section~\ref{sect-07} the
formula~\eqref{eq-5-17}, i.e. $$ v^2 -\frac{2\mu}{r} = h,
$$ where $\mu$ and $h$ are constants. This formula lets us to make
the conclusion, that the speed of the planet strongly depends on
$r$ and is as huge, as small is $r$. The constructed graphs
confirm this claim. Indeed, the smallest speed appear to be at
$\theta = 0$ and the biggest at $\theta = \pi$.

\begin{figure}[p]
\begin{minipage}[h]{0.45\linewidth}
\center{\includegraphics[width=7.5cm]{./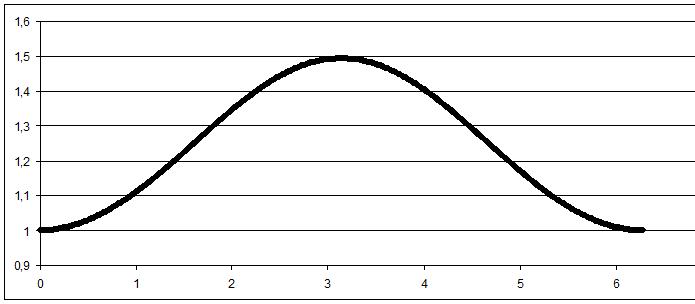}}\\
\centerline{a. $\varepsilon = 0,1$}
\end{minipage}
\begin{minipage}[h]{0.45\linewidth}
\center{\includegraphics[width=7.5cm]{./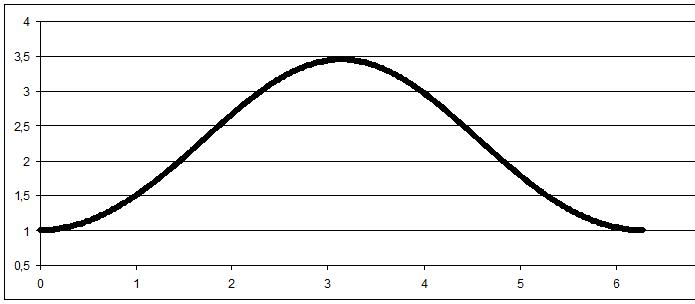}}\\%
\centerline{b. $\varepsilon = 0,3$}%
\end{minipage}
\caption{Graphs of the speed of the planet}\label{fig-08}
\end{figure}

\begin{figure}[p]
\begin{minipage}[h]{0.45\linewidth}
\center{\includegraphics[width=7.5cm]{./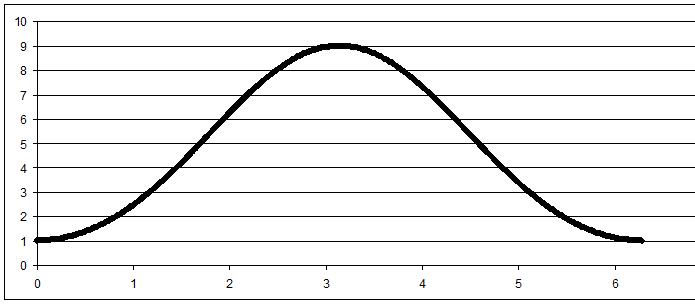}}\\
\centerline{a. $\varepsilon = 0,5$}
\end{minipage}
\begin{minipage}[h]{0.45\linewidth}
\center{\includegraphics[width=7.5cm]{./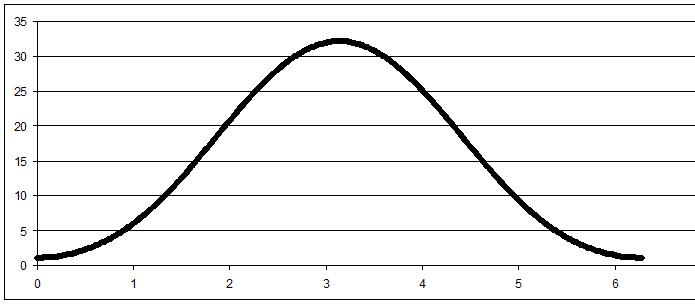}}\\%
\centerline{b. $\varepsilon = 0,7$}%
\end{minipage}
\caption{Graphs of the speed of the planet}\label{fig-09}
\end{figure}

\begin{figure}[p]
\begin{minipage}[h]{0.45\linewidth}
\center{\includegraphics[width=7.5cm]{./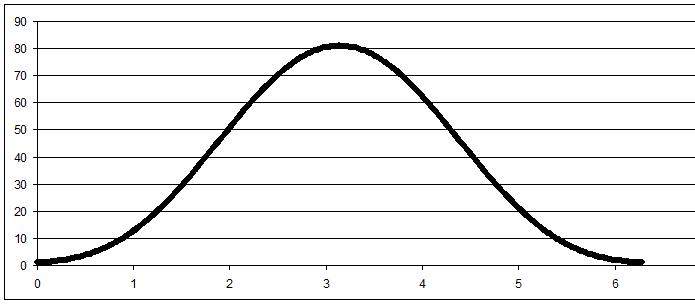}}\\
\centerline{a. $\varepsilon = 0,8$}
\end{minipage}
\begin{minipage}[h]{0.45\linewidth}
\center{\includegraphics[width=7.5cm]{./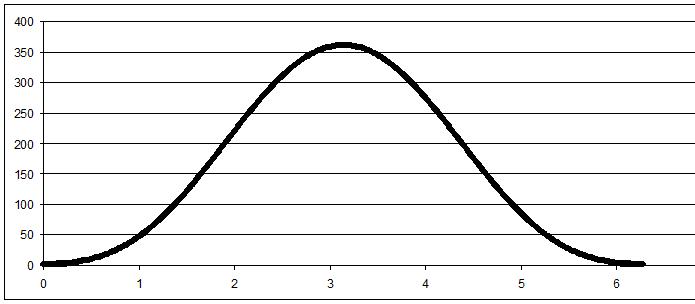}}\\%
\centerline{b. $\varepsilon = 0,9$}%
\end{minipage}
\caption{Graphs of the speed of the planet}\label{fig-10}
\end{figure}

\newpage

\section{Physical remarks about the Kepler's laws}

The notion of centripetal acceleration is well known from the
school course of physics. The fact is following: if a material
point of the mass $m$ moves on the circle of radius $R$ with
constant speed $v$, then the force $F = \frac{mv^2}{R}$ appears.

\subsection{Centripetal acceleration}

\subsubsection{Physical proof from the majority of textbooks}

Just for the completeness, we will present the proof of the
formula for the centripetal acceleration, which appear at the
majority of books.

\begin{figure}[h]
\center{\includegraphics[width=8cm]{./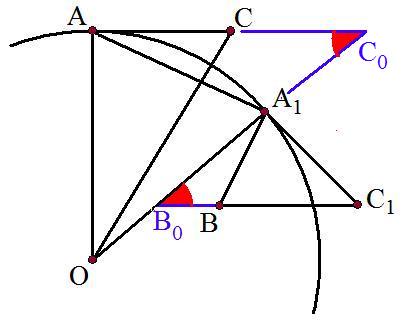}}%
\caption{}\label{fig-11}
\end{figure}

Write a circle (see Fig.~\ref{fig-11}) and let $AC$ be a vector of
the velocity at point $A$. Let $A_1$ be any another point of the
circle and $A_1C_1$ be the vector of velocity at this point. Thus,
$\overrightarrow{AC} = \overrightarrow{v}(t)$ and
$\overrightarrow{A_1C_1} = \overrightarrow{v}(t+\Delta t)$.

Take point $B$ such that $AC = BC_1$. Continue $AC$ and $BC_1$ to
obtain points $C_0$ and $B_0$ at the intersection with $AC$ and
$OA_1$ respectively. Clearly, $\angle C_0 = \angle B_0$, because
$AC$ is parallel to $BC_1$ by the construction. From triangles
$OAC_0$ and $B_0A_1C_1$ obtain that $$ \angle AOA_1 = 90^0 -
\angle C_0 = 90^0 - \angle B_0 = \angle A_1C_1B_0 = \angle
A_1C_1B,$$ whence $$ \angle AOA_1 = \angle A_1C_1B.$$

Thus, $\triangle AOA_1 \sim \triangle BC_1A_1$, because they are
isosceles with equal angles between equal sides. From their
similarity obtain $$ \frac{A_1C_1}{A_1O} = \frac{A_1B}{A_1A}.
$$ Plug physical values and obtain $$
\frac{v}{R} = \frac{\Delta v}{\Delta r} \Rightarrow \Delta v =
\frac{v\Delta r}{R}.
$$ Now, $$a = \frac{\Delta v}{\Delta t} = %
\frac{v}{R}\cdot \frac{\Delta v}{\Delta t} = \frac{v^2}{R}$$ and
we are done.

\subsubsection{Physical proof without triangles}

When a point moves by circle of radius $R$, the vector of its
velocity makes the entire rotation (rotation by $360^0$) around
the origin (we we consider the origin as the start points of the
vector.

Thus, the change of the vector of the velocity equals to the
length of the circle of the radius $R$, i.e. $2\pi |v|$. The time,
which is necessary for this rotation of the time of the rotation
of a point, which moves with the velocity $v$ by circle of radius
$R$, i.e. $t = \frac{2\pi R}{|v|}$. Thus, the acceleration is $$ a
=\frac{2\pi\, |v|}{ \frac{2\pi R}{|v|}} = \frac{v^2}{R}.
$$

\subsubsection{Pure mathematical proof}

Suppose that a point moves with a constant speed over a circle of
radius $R$ and center at origin. Then Cartesian coordinates of
this movement will be \begin{equation}\label{eq-03} \left\{
\begin{array}{l} x = R\, \cos
\omega t,\\
y = R\, \sin \omega t.
\end{array}\right.
\end{equation} The constant $\omega$ here is called radial speed and
determines, how quickly the point moves on the circle.

Since the period of functions $\sin$ and $\cos$ is $2\, \pi$, then
the period of the rotation of our point can be calculated from the
equation $ \omega T = 2\, \pi, $ whence $$ T = \frac{2\,
\pi}{\omega}.
$$ Since the length of the circle is $2\, \pi\, R$, then the
constant speed is $$ |\overrightarrow{v}| = \frac{2\, \pi\, R}{T}
= \omega\, R.
$$

Find the velocity vector as the derivative of~\eqref{eq-03}.
$$\left\{
\begin{array}{l} x' = -\omega\, R\, \sin
\omega t,\\
y' = \omega\, R\, \cos \omega t.
\end{array}\right.
$$ and acceleration will be $$\left\{
\begin{array}{l} x'' = -\omega^2\, R\, \cos
\omega t,\\
y'' = -\omega^2\, R\, \sin \omega t.
\end{array}\right.
$$ Thus, $$
|\overrightarrow{a}| = \omega^2\, R = \frac{v^2}{R}
$$ and this result is the same as the results, which were obtained
in previous sections.

\subsection{Forces, which act on a planet during its movement}

Thus, we know, that the formula for the centripetal acceleration
of the moving on the circle with constant speed is nothing more
that the law of the change of the vector of the velocity.

Clearly, the came, quite simple answer can be found by reasonings,
which look in such way, like mathematical is not used there. From
another hand it is clear, that ``mathematics is used'' in
``physical reasonings'' too, this mathematics is almost the same
and, in fact, is nothing more that the second derivative.

We will understand in this section the meaning and role of the
centripetal acceleration in the moving of a planet by its
elliptical orbit, assuming that this orbit is not a circle.

We will try to understand: is there any ``centripetal
acceleration''? What is its direction and what is its value?

When we worked with equation of the movement of a planet over its
orbit, we have the acceleration of the planet. We supposed that
this acceleration is cased by a force of Gravity, which is
directed to the Sun and, in fact, is directed to one of the
focuses of the ellipse. Clearly, there is no other acceleration of
the planet, i.e. the equation of the movement of the planet is an
exact function and this function has some exact second derivative,
which is the acceleration and determines the force, which acts on
the planet.

Nevertheless, we will find now some ``another'' acceleration.

Our idea will be the following. We will consider an ellipse to be
``locally a circle''. This will mean, that is we look ``at some
point'' and an ellipse and look at its ``neighbor'' points, then
they look like a segment of a circle. This circle (the radius and
the center) changes from point to point, but each point ``has
its'' circle. We will explain this idea a bit later, but suppose
that this is true and such circles exist. Is any movement on the
ellipse is considered as a movement by a circle, and speed of the
point (planet) and each position is found, we can find the
centripetal acceleration and can try to understand, which it will
be.

Lets come back the the understanding of the ellipse as ``local
circles''.

\begin{fact}\label{fact-24}
The projection of the acceleration of the point, whose movement is
described by vector-equation $s(t)$, to the normal to the velocity
at a point $t_0$ equals $$ |a_\nu| = \frac{v^2}{R},
$$ where $v = s'(t_0)$ and $R$ is from Fact~\ref{fact-23}.
\end{fact}

\newpage
\section{Detailed mathematical computations}\label{sect-proves}

\subsection{Derivatives and differentials}

\subsection{Polar coordinates}

\begin{fact*}[Fact~\ref{fact-03}]
Suppose that the position-vector of a point is given by polar
coordinates~\eqref{eq-02}. Then speed of this point can be found
by $$v^2 = \left( \frac{\Dif r}{\Dif t}\right)^2 +r^2\left(
\frac{d\theta}{dt}\right)^2
$$
\end{fact*}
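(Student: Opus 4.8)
The plan is to pass from polar to Cartesian coordinates, differentiate componentwise, and observe that the cross terms cancel. By Fact~\ref{fact-02} the Cartesian coordinates of the point are $x = r\cos\theta$ and $y = r\sin\theta$, where now both $r = r(t)$ and $\theta = \theta(t)$ are regarded as functions of time. By Fact~\ref{fact-01} the velocity is the vector $\overrightarrow{v} = (x'(t),\, y'(t))$, so that $v^2 = (x')^2 + (y')^2$; hence the whole computation reduces to finding $x'$ and $y'$.

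First I would differentiate each coordinate. Since $x$ and $y$ are products of the two $t$-dependent factors $r$ and $\cos\theta$ (respectively $\sin\theta$), I apply the product rule (Fact~\ref{fact-11}), and to differentiate $\cos\theta$ and $\sin\theta$ I apply the chain rule (Fact~\ref{fact-10}) together with the table derivatives of Fact~\ref{fact-12}. This gives
$$x' = \frac{dr}{dt}\cos\theta - r\sin\theta\,\frac{d\theta}{dt}, \qquad y' = \frac{dr}{dt}\sin\theta + r\cos\theta\,\frac{d\theta}{dt}.$$

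Finally I would square and add. Writing $r' = \frac{dr}{dt}$ and $\theta' = \frac{d\theta}{dt}$, the expansion of $(x')^2 + (y')^2$ produces two opposite cross terms $\mp\, 2 r r' \theta' \sin\theta\cos\theta$ which cancel, and the surviving terms regroup as $(r')^2(\cos^2\theta + \sin^2\theta) + r^2(\theta')^2(\sin^2\theta + \cos^2\theta)$. Using $\sin^2\theta + \cos^2\theta = 1$ then yields $v^2 = (r')^2 + r^2(\theta')^2$, which is exactly the claimed formula. There is no genuine obstacle here: the only point that needs care is to keep both $r$ and $\theta$ as functions of $t$ while differentiating (so that the product and chain rules are both invoked), and the one simplifying observation worth isolating is that the cross terms cancel, after which the Pythagorean identity applies cleanly to what remains.
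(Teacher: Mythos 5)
Your proposal is correct and follows essentially the same route as the paper's own proof: differentiate $x = r\cos\theta$, $y = r\sin\theta$ with the product and chain rules, then square, add, cancel the cross terms, and apply $\sin^2\theta + \cos^2\theta = 1$. The only difference is presentational --- you cite Facts~\ref{fact-10}, \ref{fact-11} and~\ref{fact-12} explicitly where the paper differentiates silently --- which is harmless and, if anything, slightly more careful.
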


\begin{proof}
Let $(r(t),\, \theta(t))$ be the equation of the movement in polar
coordinates. This means that $x(t) = r(t)\cos\theta(t)$ and $y(t)
= r(t)\sin\theta(t)$ is the law of the movement in Cartesian
coordinates. Thus, we cay calculate $v^2 = (x')^2 + (y')^2$ as
follows.

$$
x'(t) = r'(t)\cos \theta(t) -r(t)\theta'(t)\sin \theta(t);
$$$$y'(t) = r'(t)\sin\theta(t)
+r(t)\theta'(t)\cos\theta(t)).$$ Thus
$$ v^2(t) = (r'(t)\cos
\theta(t) -r(t)\theta'(t)\sin \theta(t))^2 +$$ $$+
(r'(t)\sin\theta(t) +r(t)\theta'(t)\cos\theta(t)))^2 =
$$$$
= \underline{(r'(t))^2\cos^2 \theta(t)} +
\underline{\underline{(r(t))^2(\theta'(t))^2\sin^2 \theta(t)}} -
$$ $$- \underline{\underline{\underline{2r(t)r'(t)\theta'(t)\sin
\theta(t)\cos \theta(t)}}} + \underline{(r'(t))^2\sin^2 \theta(t)}
+$$ $$+ \underline{\underline{(r(t))^2(\theta'(t))^2\cos^2
\theta(t)}} +
\underline{\underline{\underline{2r(t)r'(t)\theta'(t)\sin
\theta(t)\cos \theta(t)}}} =
$$ $$
= \underline{(r'(t))^2} +
\underline{\underline{(r(t))^2(\theta'(t))^2}} +
\underline{\underline{\underline{0}}} =
$$ $$ (r'(t))^2 +
(r(t))^2(\theta'(t))^2.
$$
This finishes the proof.
\end{proof}

\subsection{Integration}

\begin{fact*}[Fact~\ref{fact-16}]
The solution of the differential equation $$ \frac{f(x)}{g(y)} =
\frac{\Dif y}{\Dif x}
$$ can be written in the form
$$\int f(x)\Dif x - \int g(y)\Dif y = C(x),$$ where
$C'(x) = 0$.
\end{fact*}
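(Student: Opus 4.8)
The plan is to treat this as a separation-of-variables equation and to verify directly that the proposed combination of integrals stays constant along every solution $y = y(x)$. First I would rewrite the equation in the separated form $f(x)\,\Dif x = g(y)\,\Dif y$, exactly the manipulation already suggested in the text, so that the two sides can be integrated independently. Let $F$ denote an indefinite integral of $f$ (so that $F' = f$) and $G$ an indefinite integral of $g$ (so that $G' = g$); by definition these are the expressions $\int f(x)\,\Dif x$ and $\int g(y)\,\Dif y$ appearing in the statement.

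The key step is to consider, along a solution, the single-variable function $\Phi(x) = F(x) - G(y(x))$, which is precisely the left-hand side $\int f(x)\,\Dif x - \int g(y)\,\Dif y$ once we remember that $y$ is itself a function of $x$. Differentiating with respect to $x$ and applying the chain rule (Fact~\ref{fact-10}) to the second summand gives
$$\Phi'(x) = F'(x) - G'(y)\,\frac{\Dif y}{\Dif x} = f(x) - g(y)\,\frac{\Dif y}{\Dif x}.$$
Substituting the differential equation in the form $g(y)\,\frac{\Dif y}{\Dif x} = f(x)$ then makes the two terms cancel, so that $\Phi'(x) = 0$ for every $x$.

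Finally I would appeal to the Newton--Leibnitz Theorem, in the form already used in the text that a function whose derivative is identically zero is constant: since $\Phi'(x) = 0$, the function $\Phi$ equals some constant, i.e. $\int f(x)\,\Dif x - \int g(y)\,\Dif y = C(x)$ with $C'(x) = 0$, which is exactly the claimed description of the solution. The one point that needs care --- and the place where I expect the argument to feel subtle --- is the bookkeeping in the chain rule: $\int g(y)\,\Dif y$ is an antiderivative taken in the variable $y$, so differentiating it in $x$ forces the extra factor $\frac{\Dif y}{\Dif x}$, and it is precisely this factor, combined with the original equation, that produces the cancellation. No genuinely hard estimate is involved; the content is entirely in correctly reading the mixed-variable expression as a function of $x$ alone.
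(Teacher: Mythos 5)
Your proof is correct and follows essentially the same route as the paper: introduce antiderivatives $F$ and $G$, differentiate $F(x)-G(y(x))$ using the chain rule, and use the equation in the form $f(x)-g(y)\,\frac{\Dif y}{\Dif x}=0$ to conclude the derivative vanishes. The only cosmetic difference is that you invoke constancy via the Newton--Leibnitz theorem, while the paper stops at the weaker (and exactly sufficient) conclusion $C'(x)=0$, explicitly noting that piecewise-constant functions also qualify.
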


Suppose that we have an equation \begin{equation}\label{eq-65}
\frac{f(x)}{g(y)} = \frac{\Dif y}{\Dif x},
\end{equation} where $y$ is considered as unknown function on $x$,
which has to be found, whenever functions $f$ and $g$ are given.

Suppose that we know functions $F$ and $G$ such that $F' = f$ and
$G' = g$. Then $$ \frac{\Dif (F(x) - G(y))}{\Dif x} = F'(x) -
G'(y)\cdot \frac{\Dif y}{\Dif x} = f(x) -g(y)\cdot \frac{\Dif
y}{\Dif x}.
$$ Now~\eqref{eq-65} implies $$f(x) -g(y)\cdot
\frac{\Dif y}{\Dif x} =0,$$ whence
\begin{equation}\label{eq-66}\frac{\Dif (F(x) - G(y))}{\Dif x} =0.
\end{equation} The equation~\eqref{eq-66} mens that $F(x) - G(y(x))$
is such function, whose derivative as the function of $x$ is zero.
The most simple example of such function is zero function (i.e.
which is zero everywhere), or constant function, or
piecewise-constant function.

More formally, we can say that the solution of the
equation~\eqref{eq-65} is $$\int f(x)\Dif x - \int g(y)\Dif y =
C(x),$$ where $C'(x) = 0$.

\subsection{Ellipse}

An ellipse is a plane geometrical figure, which is defined as
follows. Fix two points, say $F_1,\, F_2$ on the plain, which will
contain the ellipse. The ellipse is consisted of all the points
$M$ of the plain such that \begin{equation}\label{eq-71} MF_1 +
MF_2 = 2a,\end{equation} where $a$ is fixed at the very beginning.
Points $F_1$ and $F_2$ are called focuses of the ellipse.

\begin{fact*}[Fact~\ref{fact-20}]
If the Cartesian coordinates of the focuses of the ellipse are
$F_1(-f,\, 0)$ and $F_2(f,\, 0)$, then $$ \frac{x^2}{a} +
\frac{y^2}{b} = 1
$$ is an equation of the ellipse.
\end{fact*}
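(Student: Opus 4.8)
The plan is to start directly from the focal definition~\eqref{eq-71} of the ellipse, namely $MF_1 + MF_2 = 2a$, and translate it into Cartesian coordinates. Writing $M = (x,\, y)$ and using the given foci $F_1 = (-f,\, 0)$ and $F_2 = (f,\, 0)$, the defining condition becomes
$$
\sqrt{(x+f)^2 + y^2} + \sqrt{(x-f)^2 + y^2} = 2a.
$$
The whole proof then reduces to the purely algebraic task of removing the two square roots and simplifying to the claimed quadratic form (where, to match the geometry of Section~\ref{sect-05}, the intended equation is $\frac{x^2}{a^2} + \frac{y^2}{b^2} = 1$).

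First I would isolate one radical and square both sides. After cancelling the common terms $x^2 + y^2 + f^2$ that appear on both sides, the difference $(x+f)^2 - (x-f)^2 = 4fx$ collapses the relation to a linear equation containing only one surviving radical, namely $a\sqrt{(x-f)^2 + y^2} = a^2 - fx$. Squaring this a second time eliminates the last radical. This second squaring is where the algebra is heaviest and is the main obstacle: one must expand both $a^2\big((x-f)^2 + y^2\big)$ and $(a^2 - fx)^2$ and collect the $x^2$, $y^2$, and constant terms carefully without sign errors.

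After the second squaring the $-2a^2 fx$ cross terms cancel and, grouping the rest, one obtains $(a^2 - f^2)x^2 + a^2 y^2 = a^2(a^2 - f^2)$. Dividing through by $a^2(a^2 - f^2)$ gives $\frac{x^2}{a^2} + \frac{y^2}{a^2 - f^2} = 1$. The final step is to recall from Section~\ref{sect-05} that $f = F_1 O = \sqrt{a^2 - b^2}$, so that $a^2 - f^2 = b^2$; substituting this yields $\frac{x^2}{a^2} + \frac{y^2}{b^2} = 1$, the desired equation. I would also note that each squaring is reversible on the region where the radicals are nonnegative, so the resulting equation describes exactly the locus of the ellipse and not some strictly larger set of points.
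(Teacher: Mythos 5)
Your proof is correct and takes essentially the same route as the paper's: translate the focal condition $MF_1+MF_2=2a$ into $\sqrt{(x+f)^2+y^2}+\sqrt{(x-f)^2+y^2}=2a$, isolate one radical and square, reduce to $a\sqrt{(x-f)^2+y^2}=a^2-fx$, square again and group to get $x^2(a^2-f^2)+a^2y^2=a^2(a^2-f^2)$, then set $b^2=a^2-f^2$ (the paper justifies $f\leq a$ by the triangle inequality) to obtain $\frac{x^2}{a^2}+\frac{y^2}{b^2}=1$. Your extra remarks --- that the squarings are reversible where the relevant quantities are nonnegative, and that the statement's displayed equation has a typo (the denominators should be $a^2$ and $b^2$) --- are minor refinements the paper omits, but the argument is the same.
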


\begin{proof}

Denote the cartesian coordinates of focuses of an ellipse by
$F_1(-f,\, 0)$ and $F_2(f,\, 0)$, whence obtain the equation of
this ellipse in cartesian plain. Rewrite~\eqref{eq-71} as $$
\sqrt{(x+f)^2+y^2} + \sqrt{(x-f)^2+y^2} = 2a;
$$ $$
(x+f)^2+y^2 = (2a -\sqrt{(x-f)^2+y^2})^2;
$$
$$
\underline{x^2} +2xf +\underline{\underline{f^2 +y^2}} = 4a^2
-4a\sqrt{(x-f)^2+y^2} + \underline{x^2} -2xf +
\underline{\underline{f^2 +y^2}};
$$
$$
2xf  = 4a^2 -4a\sqrt{(x-f)^2+y^2}  -2xf;
$$
$$
a\sqrt{(x-f)^2+y^2} = a^2 -xf;
$$
$$
a^2x^2 -\underline{2ax^2xf} +a^2f^2 +a^2y^2 = a^4 -
\underline{2a^2xf} + x^2f^2;
$$
$$
x^2(a^2 -f^2) +a^2y^2 = a^2(a^2 -f^2);
$$
\begin{equation}\label{eq-07}
\frac{x^2}{a^2} + \frac{y^2}{a^2 -f^2} = 1.
\end{equation}

If follows from the equality of the triangle $F_1F_2\leq MF_1
+MF_2$ that $2f\leq 2a$. Thus, denote
\begin{equation}\label{eq-09}b^2 = a^2 - f^2,\end{equation}
and rewrite~\eqref{eq-07} as
\begin{equation}\label{eq-08} \frac{x^2}{a^2} + \frac{y^2}{b^2} =
1.
\end{equation} \end{proof}

Equation~\eqref{eq-08} is called %
the \textbf{canonical equation of an ellipse} in Cartesian form.

Due to~\eqref{eq-08} we can imagine an ellipse as an oval figure,
which is bounded by lines $x = \pm a$ and $y=\pm b$. If follows
from~\eqref{eq-09} that $a\geq b$ and
\begin{equation}\label{eq-10}f = \sqrt{a^2-b^2}.
\end{equation}

The expression \begin{equation}\label{eq-11} \varepsilon =
\frac{f}{a}
\end{equation} is called the %
\textbf{eccentricity} and characterizes the differ of the ellipse
and a circle. Notice, that $\varepsilon\in [0,\, 1)$. Moreover,
$\varepsilon =0$ if and only if an ellipse is the circle.
Controversially, if $\varepsilon\approx 1$, then an ellipse
transforms to a figure, which is proximate to an line segment
between points $(-a,\, 0)$ and $(a,\, 0)$.

\begin{fact*}[Fact~\ref{fact-21}]
If $F_1$ is in the polar origin and $F_2$ is on the polar axis,
then the equation of the ellipse is $$ r =
\frac{p}{1-e\cos\theta}, $$ where $$ p = \frac{b^2}{a}$$ and is
called semi-latus rectum of the ellipse.
\end{fact*}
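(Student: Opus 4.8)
The plan is to work directly from the focal definition~\eqref{eq-71} of the ellipse, now read in polar coordinates centred at the focus $F_1$. Since $F_1$ sits at the polar origin, the polar radius of a point $M$ on the ellipse is exactly $r = MF_1$, so the whole task reduces to expressing the \emph{other} focal distance $MF_2$ in terms of $r$ and $\theta$ and then imposing the constraint $MF_1 + MF_2 = 2a$.

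First I would locate $F_2$. Translating the canonical picture of Section~\ref{sect-05} (where $F_1 = (-f,0)$ and $F_2 = (f,0)$) so that $F_1$ lands at the origin puts $F_2$ at Cartesian position $(2f,0)$, with $f = \sqrt{a^2-b^2}$ as in~\eqref{eq-10}, and I take the polar axis along the ray from $F_1$ toward $F_2$. Writing $M = (r\cos\theta,\, r\sin\theta)$ by Fact~\ref{fact-02}, the distance formula (equivalently, the law of cosines in triangle $F_1MF_2$) gives
\[
MF_2 = \sqrt{(r\cos\theta - 2f)^2 + (r\sin\theta)^2} = \sqrt{r^2 - 4fr\cos\theta + 4f^2}.
\]

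Next I would substitute $MF_2 = 2a - r$ from~\eqref{eq-71}, square both sides, and observe that the $r^2$ and $4f^2$ terms cancel against $4a^2$, leaving a relation that is \emph{linear} in $r$:
\[
r\,(a - f\cos\theta) = a^2 - f^2.
\]
Using $a^2 - f^2 = b^2$ from~\eqref{eq-09} and dividing through by $a$, this rearranges to
\[
r = \frac{b^2/a}{1 - (f/a)\cos\theta},
\]
which is precisely the claimed formula once one sets $p = b^2/a$ and recognises $e = f/a$ as the eccentricity~\eqref{eq-11}.

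The computation is essentially routine algebra; the only genuine subtleties are bookkeeping of the orientation and the legitimacy of squaring. The orientation is what produces the sign $1 - e\cos\theta$ rather than $1 + e\cos\theta$: with the polar axis pointing from $F_1$ toward $F_2$, the point nearest the pole occurs at $\theta = 0$ (perihelion), and I would make that choice explicit. As for squaring, no extraneous branch appears, because $2a - r = MF_2 \geq 0$ holds automatically for points of the ellipse (since $r = MF_1 \leq MF_1 + MF_2 = 2a$), so the step $MF_2 = 2a - r \Leftrightarrow MF_2^2 = (2a-r)^2$ is reversible. That is the whole content of the argument.
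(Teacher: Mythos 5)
Your proposal is correct and follows essentially the same route as the paper's proof: polar coordinates centred at the focus $F_1$ with $F_2$ at $(2f,0)$, the distance formula giving $MF_2 = \sqrt{r^2 - 4fr\cos\theta + 4f^2}$, substitution of $MF_2 = 2a - r$, squaring down to the linear relation $r(a - f\cos\theta) = a^2 - f^2$, and the identities $b^2 = a^2 - f^2$ and $e = f/a$; your added observation that the squaring step is reversible because $2a - r = MF_2 \geq 0$ is a small rigor bonus the paper omits. One slip in your orientation aside, though it does not affect the derivation (the algebra produces the sign on its own): with the polar axis pointing from $F_1$ toward $F_2$, the formula $r = \frac{p}{1 - e\cos\theta}$ gives $r(0) = \frac{p}{1-e} = a + f$, so $\theta = 0$ is the \emph{farthest} vertex (aphelion) and the nearest point occurs at $\theta = \pi$; perihelion at $\theta = 0$ would instead correspond to $r = \frac{p}{1 + e\cos\theta}$.
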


\begin{proof}
Put the origin to the left focus $F_1$ and the $X$-axis (and polar
axis) out through the second focus $F_2$. Then cartesian
coordinates of $F_2$ would be $(2c,\, 0)$.

Let $M$ be a ``general point''of the ellipse ad let $(r,\,
\theta)$ be its polar coordinates. Then cartesian coordinates of
$M$ are $M(r\cos\theta,\, r\sin\theta)$. Then
$$ MF_2 = \sqrt{(2f)^2 +r^2\cos^2\theta
-4rf\cos\theta +r^2\sin^2\theta} =
$$$$= \sqrt{r^2 -4rf\cos\theta +4f^2}
$$ and we can rewrite~\eqref{eq-71} as
 $$ r +
\sqrt{r^2 -4rf\cos\theta +4f^2} = 2a.
$$
Subtract $r$ from both sides, square this equality and obtain $$
r^2 -4rf\cos\theta +4f^2 = 4a^2 +r^2 -4ar.
$$
After the evident simplification get
$$
-rf\cos\theta +f^2 = a^2 -ar.
$$
Now express $r$
$$
r =\frac{a^2 -f^2}{a-f\cos\theta} = \frac{a^2
-f^2}{a\left(1-\frac{f}{a}\cos\theta\right)},
$$ and we are done.
\end{proof}

If follows from~\eqref{eq-09} and~\eqref{eq-11} that $$ r =
\frac{p}{1-\varepsilon\cos\theta},
$$ where $p=\frac{b^2}{a}$. %
Notice, that $p$ is called \textbf{semi-latus rectum} of the
ellipse.

\begin{fact*}[Fact~\ref{fact-22}]
The area of the ellipse can be calculated as $A = \pi\, ab.$
\end{fact*}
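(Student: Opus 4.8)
The plan is to compute the area by integration, starting from the canonical Cartesian equation of the ellipse established in Fact~\ref{fact-20}, namely $\frac{x^2}{a^2} + \frac{y^2}{b^2} = 1$.

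First I would solve this equation for $y$ in order to describe the upper half of the ellipse as the graph of a function. This gives $y = \frac{b}{a}\sqrt{a^2 - x^2}$ for $x\in[-a,\, a]$. Since the ellipse is symmetric with respect to the $x$-axis, its total area is twice the area between this graph and the $X$-axis, so that $$ A = 2\int\limits_{-a}^{a} \frac{b}{a}\sqrt{a^2 - x^2}\, \Dif x = \frac{2b}{a}\int\limits_{-a}^{a}\sqrt{a^2 - x^2}\, \Dif x. $$

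The key observation is that the graph of $y = \sqrt{a^2 - x^2}$ over the segment $[-a,\, a]$ is exactly the upper semicircle of radius $a$ centred at the Origin. Hence the remaining integral $\int\limits_{-a}^{a}\sqrt{a^2 - x^2}\, \Dif x$ is precisely the area of this semicircle, which equals $\frac{\pi a^2}{2}$. Substituting this value yields $$ A = \frac{2b}{a}\cdot\frac{\pi a^2}{2} = \pi\, a\, b, $$ as required.

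The only step requiring genuine care is the evaluation of $\int\limits_{-a}^{a}\sqrt{a^2 - x^2}\, \Dif x$. At the level of this book one may simply invoke the elementary-geometry fact that a full circle of radius $a$ has area $\pi a^2$, so that a semicircle has area $\frac{\pi a^2}{2}$. If instead one wishes to evaluate the integral analytically within the framework already developed, the natural route is the substitution $x = a\sin t$ (so that $\Dif x = a\cos t\, \Dif t$), which turns the integrand into $a^2\cos^2 t$; one then needs an antiderivative of $\cos^2 t$, most conveniently obtained from the identity $\cos^2 t = \frac{1+\cos 2t}{2}$ together with the table derivatives of Fact~\ref{fact-12}. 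This is the single computational point, and it is where I would expect the main (though modest) effort to lie.
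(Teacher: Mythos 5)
Your proof is correct, but it takes a genuinely different route from the book's. You compute the area directly as an integral: solve the canonical equation for $y$, use symmetry in the $x$-axis, and reduce everything to $\int_{-a}^{a}\sqrt{a^2-x^2}\,\Dif x$, which you identify geometrically as the area $\frac{\pi a^2}{2}$ of a semicircle (or evaluate by the substitution $x=a\sin t$). The book instead argues by scaling: it starts from the circle of radius $b$, with known area $\pi b^2$, stretches the plane in the $X$-direction by the factor $\frac{a}{b}$, observes that this carries the circle onto the ellipse $\frac{x^2}{a^2}+\frac{y^2}{b^2}=1$, and that each infinitesimal rectangle $\Delta x\times\Delta y$ becomes $\frac{a}{b}\Delta x\times\Delta y$, so the total area is multiplied by $\frac{a}{b}$ and equals $\pi ab$. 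The two arguments are close in spirit --- both ultimately rest on the known area $\pi r^2$ of a circle, and both treat area as a sum of small rectangles --- but they trade different things. The book's stretching argument avoids any antiderivative computation entirely, which fits its informal, pre-integral treatment of area; your integral route is more computational but also more standard and more flexible, since the same scheme computes areas under arbitrary graphs, and your fallback substitution $x=a\sin t$ with $\cos^2 t=\frac{1+\cos 2t}{2}$ keeps the argument self-contained within the calculus the book develops (Facts~\ref{fact-10} and~\ref{fact-12}), rather than importing the semicircle area as an external geometric fact. Either version is a complete and valid proof of Fact~\ref{fact-22}.
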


\begin{proof}
Before the beginning of the proof of this fact we should give the
explanation of the notion of the area. It is a well known fact
that area of the rectangle is the product of its sides. The same
is true for the rectangle with sides $\Delta x$ and $\Delta y$,
which are ``very small'' and are parallel to $X$-axis and $Y$-axis
respectively. The area of any geometrical is defines as the sum of
all squares $\Delta x \times \Delta y$, which belong to our figure
and such that $\Delta x \approx 0$ and $\Delta y \approx 0$(in
fact $\Delta x = \Dif x$, $\Delta y = \Dif y$ and the area is some
integral, but we will not come into these details).

Suppose that $a = b$ and our ellipse is a circle. Clearly, in this
case its area is $\pi b^2$. Now we sketch in the $X$-axis
direction the big square, where the circle is inscribed in such
way that the width of the appeared rectangle will become $a$, i.e.
we sketch $\frac{a}{b}$ times. The equation of the new figure will
be $$\frac{x^2}{a^2} + \frac{y^2}{b^2} = 1,$$ but it is exactly
the equation~\eqref{eq-08} of the ellipse.

The area of the new geometrical figure can be calculates follows.
Since each rectangle $\Delta x \times \Delta y$ is transformed to
$\frac{a}{b}\cdot \Delta x\times \Delta y$, then the former area
$\pi b^2$ is multiplied by $\frac{a}{b}$ and becomes equal to
$\pi\, ab$.
\end{proof}

\subsection{Mathematical expression of the Second Kepler's law}

\begin{fact*}[Fact~\ref{fact-19}] Let the trajectory of the planet is
given in Cartesian coordinates as $$ \frac{x^2}{a^2} +
\frac{y^2}{b^2} = 1,
$$ where $a>b>0$. Then the Second Kepler's law can be rewritten as
$$
x\frac{dy}{dt} -y\frac{dx}{dt} = C,
$$ %
where $C$ is a constant, dependent on the planet and independent
on time. Moreover, $$C = \frac{A}{T}, $$ where $A$ is the area of
the ellipse, which is the trajectory of the planet and $T$ is the
period of the rotation.
\end{fact*}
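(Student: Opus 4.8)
The plan is to take the Second Kepler's law at face value --- the segment from the Sun (placed at the origin) to the planet sweeps out area at a constant rate --- and to translate that ``constant areal velocity'' into the Cartesian quantity $x\frac{\Dif y}{\Dif t}-y\frac{\Dif x}{\Dif t}$. First I would compute the area swept in an infinitesimal time step. If the planet sits at $(x,y)=(x(t),y(t))$ and moves to $(x+\Dif x,\,y+\Dif y)$, then the radius vector sweeps out the triangle with vertices $O=(0,0)$, $(x,y)$ and $(x+\Dif x,\,y+\Dif y)$. By Fact~\ref{fact-18} the parallelogram spanned by $(x,y)$ and $(\Dif x,\Dif y)$ has area $|\mathcal{P}_3[(x,y),(\Dif x,\Dif y)]|=|x\,\Dif y-y\,\Dif x|$, so the swept triangle has half of this, giving
$$
\frac{\Dif A}{\Dif t}=\frac{1}{2}\left(x\,\frac{\Dif y}{\Dif t}-y\,\frac{\Dif x}{\Dif t}\right).
$$
I would drop the absolute value because the planet circulates in one fixed sense, so the bracketed expression keeps a constant sign throughout the motion.

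The first assertion is then immediate. Kepler's second law says that $\frac{\Dif A}{\Dif t}$ takes one and the same value at every instant, and by the identity above this is precisely the statement that $x\frac{\Dif y}{\Dif t}-y\frac{\Dif x}{\Dif t}$ is constant in $t$; naming this constant $C$ yields $x\frac{\Dif y}{\Dif t}-y\frac{\Dif x}{\Dif t}=C$. I would stress that this half of the statement uses only that the area is measured about a fixed centre (the Sun at the origin) and \emph{not} the particular equation $\frac{x^2}{a^2}+\frac{y^2}{b^2}=1$; the ellipse itself enters only through its enclosed area in the next step.

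For the ``moreover'' part I would integrate the (now constant) areal velocity over exactly one revolution. As $t$ runs over a period $[0,T]$ the radius vector sweeps the interior of the ellipse once, so the accumulated area equals the area of the orbit, which by Fact~\ref{fact-22} is $A=\pi ab$. Hence
$$
A=\int_0^T\frac{\Dif A}{\Dif t}\,\Dif t=\int_0^T\frac{C}{2}\,\Dif t=\frac{C}{2}\,T,
$$
which determines the constant $C$ in terms of the orbit's area $A$ and its period $T$. The only step that is not mere bookkeeping is the geometric one --- recognising the swept infinitesimal region as a triangle and invoking Fact~\ref{fact-18} for its area. Once the identity $\frac{\Dif A}{\Dif t}=\frac{1}{2}(x\,\frac{\Dif y}{\Dif t}-y\,\frac{\Dif x}{\Dif t})$ is in hand, reading off Kepler~II as the constancy of this quantity and integrating it over a single period are both routine.
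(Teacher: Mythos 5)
Your argument follows the same route as the paper's own proof: approximate the region swept out between $t$ and $t+\Delta t$, compute its area via Fact~\ref{fact-18}, drop the absolute value because the sense of circulation is fixed, and then integrate the constant rate over one full period to identify the constant. The one place where you part ways with the paper is the factor $\frac{1}{2}$, and there your version is the geometrically correct one. The paper's proof asserts that the area swept between $\overrightarrow{r}(t)$ and $\overrightarrow{r}(t+\Delta t)$ is $\abs\left(x(t)y(t+\Delta t)-y(t)x(t+\Delta t)\right)$, i.e.\ it identifies the swept region with the \emph{full parallelogram} of Fact~\ref{fact-18}; but to first order that region is the triangle with vertices $O$, $\overrightarrow{r}(t)$, $\overrightarrow{r}(t+\Delta t)$, whose area is half the parallelogram's, exactly as you say. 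Consequently the paper arrives at $A=CT$ and states the ``moreover'' clause as $C=\frac{A}{T}$, while your accounting gives
$$
A=\frac{C}{2}\,T,\qquad\text{i.e.}\qquad C=\frac{2A}{T}.
$$

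So, read against the statement as literally worded, your proposal proves the first claim (constancy of $x\frac{\Dif y}{\Dif t}-y\frac{\Dif x}{\Dif t}$) but contradicts the second by a factor of $2$ --- and the discrepancy is an error in the paper, not a gap in your argument. You were also right to remark that the constancy part uses only a fixed centre at the origin and not the ellipse equation; the paper's proof tacitly does the same. Note that the missing $\frac{1}{2}$ propagates into the paper's proof of Fact~\ref{fact-13}, where $A=CT$ is invoked; since it merely rescales the constant ($k=4\pi^2 d$ instead of $k=\pi^2 d$), the equivalence asserted there survives, but the normalization $C=\frac{A}{T}$ does not.
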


\begin{proof}
Denote $r(t)=(x(t),\, y(t))$ the coordinates of the planet of the
moment $t$. By Fact~\ref{fact-18} (above in this section), the
area between vectors $\overrightarrow{r}(t)$ and
$\overrightarrow{r}(t+\Delta t)$ equals
\begin{equation}\label{eq-49}A(t,\, t+\Delta t) = \abs\left(
x(t)y(t+\Delta t) - y(t)x(t+\Delta t)\right).\end{equation}

Second Kepler's Law claims that there is a constant $C$ such that
\begin{equation}\label{eq-46}
A(t,\, t+\Delta t) = C\, \Delta t.\end{equation} For the entire
ellipse (trajectory of a planet) denote $A$ its area and $T$ the
period of the movement we, evidently, have $$ A = CT,
$$ whence \begin{equation}\label{eq-47}
C = \frac{A}{T}.
\end{equation}
Since the direction (clockwise or, anti clockwise) of the movement
of a planet is constant (it trivially follows from the Second
Kepler's law), then we can remove the absolute value sign
in~\eqref{eq-49} and obtain from~\eqref{eq-46} and~\eqref{eq-47}
that \begin{equation}\label{eq-48}\left( x(t)y(t+\Delta t) -
y(t)x(t+\Delta t)\right) = \frac{A}{T}\, \Delta t.\end{equation}

Divide both sides by $\Delta t$ and consider $\Delta t \approx 0$,
i.e. take the difference $\Dif t$ instead of $\Delta t$, and
obtain
\begin{equation}\label{eq-4-7-a} x\frac{dy}{dt} -y\frac{dy}{dt} =
\frac{A}{T}.
\end{equation}
\end{proof}

\begin{fact*}[Fact~\ref{fact-04}]
If the Sun is in the Origin, then the Second Kepler's law in polar
coordinates is expressed as
$$
r^2\frac{d\theta}{dt}=C,
$$ where $C$ is the constant from Fact~\ref{fact-19}.
\end{fact*}

\begin{proof}
We will rewrite~\ref{eq-4-7-a} (the Second Kepler's law from
Fact~\ref{fact-19}) in Polar coordinates. If $$
\left\{\begin{array}{l}
x(t)=r(t)\cos (\theta(t))\\
y(t)=r(t)\sin(\theta(t)),
 \end{array}
\right.$$ then $$\left\{\begin{array}{l}
x'(t)=r'(t)\cos (\theta(t)) - r(t)\theta'(t)\sin\theta(t)\\
y'(t)=r'(t)\sin(\theta(t)) + r(t)\theta'(t)\cos\theta(t),
 \end{array}\right.
$$
and we can rewrite~\eqref{eq-4-7-a} as $$ C = r\cos \theta\cdot
(r'\sin\theta + r\theta'\cos\theta) - r\sin\theta\cdot (r'\cos
\theta - r\theta'\sin\theta) =
$$$$
r^2\theta'\cdot (\cos^2\theta + \sin^2\theta) =r^2\theta'
$$ and we are done.
\end{proof}

\begin{fact*}[Fact~\ref{fact-17}]
Denote $u = \frac{1}{r}$. Then in polar coordinates the Second
Kepler's Law can be written as $$ v^2 = C^2\left[\left(\frac{\Dif
u}{\Dif t}\right)^2 +u^2\right].
$$
\end{fact*}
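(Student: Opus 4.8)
The plan is to reduce everything to the two facts already available in polar coordinates: the expression for the speed from Fact~\ref{fact-03},
$$ v^2 = \left(\frac{\Dif r}{\Dif t}\right)^2 + r^2\left(\frac{\Dif \theta}{\Dif t}\right)^2, $$
together with the Second Kepler's Law in the form of Fact~\ref{fact-04}, namely $r^2\,\frac{\Dif\theta}{\Dif t} = C$. The whole purpose of substituting $u = \frac{1}{r}$ is that it tames the second term and lets the angular-momentum relation eliminate the time variable, so I would first rewrite $\frac{\Dif\theta}{\Dif t} = \frac{C}{r^2} = C u^2$.

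Next I would treat the radial term $\left(\frac{\Dif r}{\Dif t}\right)^2$ by passing from differentiation in $t$ to differentiation in $\theta$ via the chain rule (Fact~\ref{fact-10}): $\frac{\Dif r}{\Dif t} = \frac{\Dif r}{\Dif\theta}\cdot\frac{\Dif\theta}{\Dif t}$. Since $r = \frac{1}{u}$, Fact~\ref{fact-05} gives $\frac{\Dif r}{\Dif\theta} = -\frac{1}{u^2}\,\frac{\Dif u}{\Dif\theta}$, and combining this with $\frac{\Dif\theta}{\Dif t} = C u^2$ the two factors of $u^2$ cancel, leaving $\frac{\Dif r}{\Dif t} = -C\,\frac{\Dif u}{\Dif\theta}$. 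Squaring yields $\left(\frac{\Dif r}{\Dif t}\right)^2 = C^2\left(\frac{\Dif u}{\Dif\theta}\right)^2$, which is precisely the first summand of the claimed formula.

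For the remaining term I would simply substitute $r = \frac{1}{u}$ and $\frac{\Dif\theta}{\Dif t} = C u^2$ to obtain $r^2\left(\frac{\Dif\theta}{\Dif t}\right)^2 = \frac{1}{u^2}\cdot C^2 u^4 = C^2 u^2$. Adding the two pieces gives $v^2 = C^2\left[\left(\frac{\Dif u}{\Dif\theta}\right)^2 + u^2\right]$, which is the asserted identity (the derivative of $u$ here is understood with respect to the polar angle $\theta$, as the subsequent use of this fact in Section~\ref{sect-07}, where it produces $c^2\left(\frac{\Dif u}{\Dif\theta}\right)^2 = h + 2\mu u - c^2 u^2$, confirms).

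I expect the only genuinely delicate point to be this change of independent variable from $t$ to $\theta$: it is legitimate precisely because Fact~\ref{fact-04} guarantees $\frac{\Dif\theta}{\Dif t} = C u^2 \neq 0$ along the orbit, so $\theta$ is a valid parameter and the chain rule applies. Everything else is the routine cancellation of the powers of $u$ generated by $u = \frac{1}{r}$ against those generated by the angular-momentum law, so no further estimates or limiting arguments are needed.
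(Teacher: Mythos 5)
Your proof is correct and takes essentially the same route as the paper's own: Fact~\ref{fact-03} for the polar expression of $v^2$, Fact~\ref{fact-04} to substitute $\frac{\Dif\theta}{\Dif t}=\frac{C}{r^2}$, and the chain rule combined with $u=\frac{1}{r}$ to convert the radial term. You in fact treat the one delicate point more carefully than the paper, whose proof ends with $\frac{\Dif u}{\Dif t}=-\frac{1}{r^2}\,\frac{\Dif r}{\Dif t}$ even though the derivative in the resulting identity is really $\frac{\Dif u}{\Dif\theta}$ — exactly the reading you justify (via $\frac{\Dif\theta}{\Dif t}=Cu^2\neq 0$) and which the later use in Section~\ref{sect-07} confirms.
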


\begin{proof}
Notice, that, by Fact~\ref{fact-03}, we have
\begin{equation}\label{eq-69}
v^2 =\left( \frac{\Dif r}{\Dif t}\right)^2 +r^2\left(
\frac{d\theta}{dt}\right)^2. \end{equation}

If follows from Fact~\ref{fact-04} that $$ \frac{\Dif \theta}{\Dif
t} = \frac{C}{r^2},
$$ whence $$\frac{\Dif r}{\Dif t} = \frac{\Dif r}{\Dif
\theta}\cdot \frac{\Dif \theta}{\Dif t} = \frac{C}{r^2}\cdot
\frac{\Dif r}{\Dif \theta}.
$$ Thus, we can rewrite~\eqref{eq-69} as
$$v^2 = C^2\left[ \left( \frac{1}{r^2}\,
\frac{dr}{d\theta}\right)^2 +\frac{1}{r^2} \right].
$$
Denote $$u = \frac{1}{r},$$ whence $$ \frac{\Dif u}{\Dif t} =
\frac{-1}{r^2}\cdot \frac{\Dif r}{\Dif t}
$$ and we are done.
\end{proof}

\subsection{Equivalence of the Third Kerpler's Law and the Fourth
Newton's law}\label{sect-04}

We will present in this Section the proof of Fact~\ref{fact-08}.

\begin{fact*}[Fact~\ref{fact-08}]
Suppose that the Sun is fixed in the Origin and a planet moves by
the elliptic orbit with polar equation
\begin{equation}\label{eq-32}%
r = \frac{p}{1-e\, \cos\theta}\end{equation} %
and the equation $$ r^2\, \frac{\Dif \theta}{\Dif t} = C
$$ holds. Then the acceleration of the planet is $$
\overrightarrow{a} = -\,
\frac{\overrightarrow{r}}{|\overrightarrow{r}|}\cdot
\frac{C^2}{p\, r^2}.
$$
\end{fact*}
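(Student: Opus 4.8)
The plan is to compute $\overrightarrow{a}$ directly in Cartesian coordinates and then recognize the outcome as a radial, inward-pointing vector. By Facts~\ref{fact-01} and~\ref{fact-09} the acceleration is $(x''(t),\, y''(t))$, so starting from $x = r\cos\theta$ and $y = r\sin\theta$ (Fact~\ref{fact-02}), with both $r$ and $\theta$ regarded as functions of $t$, I would differentiate twice using the product rule (Fact~\ref{fact-11}), the chain rule (Fact~\ref{fact-10}) and the derivatives $\sin' = \cos$, $\cos' = -\sin$ (Fact~\ref{fact-12}). The first derivatives $x'$ and $y'$ are exactly those already computed in the proof of Fact~\ref{fact-03}, so only the second differentiation is genuinely new work.

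Two inputs make the resulting expressions collapse. From Fact~\ref{fact-04} we have $\theta' = C/r^2$, which I use to eliminate every occurrence of $\theta'$. Differentiating the relation $r^2\theta' = C$ once more gives $2rr'\theta' + r^2\theta'' = 0$, that is $r\theta'' = -2r'\theta'$; this single identity is what forces the tangential component of the acceleration to vanish. For the radial part I would exploit a pleasant simplification hidden in the orbit equation~\eqref{eq-32}: differentiating $r = p/(1-e\cos\theta)$ with respect to $t$ and substituting $\theta' = C/r^2$ yields the remarkably clean relation $r' = -\frac{Ce}{p}\sin\theta$, and one further differentiation gives $r'' = -\frac{C^2 e\cos\theta}{p\, r^2}$.

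Writing $(x'',\, y'')$ in the orthonormal radial/tangential frame $(\cos\theta,\sin\theta)$, $(-\sin\theta,\cos\theta)$, the tangential coefficient is $2r'\theta' + r\theta''$, which is zero by the identity above, while the radial coefficient is $r'' - r(\theta')^2 = -\frac{C^2 e\cos\theta}{p\, r^2} - \frac{C^2}{r^3}$. Substituting $\frac{1}{r} = \frac{1-e\cos\theta}{p}$ makes the two $e\cos\theta$ terms cancel and leaves exactly $-\frac{C^2}{p\, r^2}$. Hence $\overrightarrow{a} = -\frac{C^2}{p\, r^2}(\cos\theta,\sin\theta) = -\frac{\overrightarrow{r}}{|\overrightarrow{r}|}\cdot\frac{C^2}{p\, r^2}$, since $(\cos\theta,\sin\theta) = \overrightarrow{r}/r$.

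The main obstacle is organizational rather than conceptual: the second differentiation produces several terms in $r',\, r'',\, \theta',\, \theta''$, and the whole verification rests on the two cancellations above, namely the tangential terms killed by the constancy of $r^2\theta'$ and the radial $e\cos\theta$ terms killed by the orbit equation. The care lies entirely in arranging the algebra so that both cancellations become transparent; an alternative that shortens the bookkeeping is to pass to $u = 1/r$ as in Fact~\ref{fact-17}, since then $u'' + u = 1/p$ and the radial acceleration reads $-C^2u^2/p$ immediately.
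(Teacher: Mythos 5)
Your proof is correct, and although its computational core coincides with the paper's (both differentiate $x=r\cos\theta$, $y=r\sin\theta$ twice, and both extract $r'=-\frac{Ce}{p}\sin\theta$ and $r''=-\frac{C^2e\cos\theta}{p\,r^2}$ from the orbit equation together with $\theta'=C/r^2$), you organize the endgame genuinely differently. The paper computes $\theta''$ explicitly, namely $\theta''=\frac{2C^2\varepsilon\sin\theta}{p^4}(1-\varepsilon\cos\theta)^3$, substitutes all four expressions $r'$, $r''$, $\theta'$, $\theta''$ into the raw second derivatives~\eqref{eq-34}, and verifies separately for $x''$ and for $y''$ that the trigonometric terms cancel down to $-\frac{C^2(1-\varepsilon\cos\theta)^2}{p^3}\cos\theta$ and the analogous $\sin\theta$ expression. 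You instead regroup~\eqref{eq-34} in the radial/tangential frame, $\overrightarrow{a}=(r''-r\theta'^2)(\cos\theta,\,\sin\theta)+(2r'\theta'+r\theta'')(-\sin\theta,\,\cos\theta)$, and kill the tangential coefficient by a structural observation: differentiating the conserved quantity $r^2\theta'=C$ gives $2rr'\theta'+r^2\theta''=0$, so $2r'\theta'+r\theta''=0$ identically. This buys you two real savings over the paper's route: you never need the explicit formula for $\theta''$ (the paper derives it only to watch it cancel), and the radial part reduces to the single substitution $\frac{1}{r}=\frac{1-e\cos\theta}{p}$ in $r''-r(\theta')^2$, which makes both cancellations visible in advance rather than emerging from term-by-term bookkeeping. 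What the paper's brute-force version buys in exchange is that it demands no idea beyond the chain and product rules, consistent with its stated pedagogical intent of keeping the proof ``simple, but technical.'' Your closing remark on the substitution $u=1/r$, with $\frac{d^2u}{d\theta^2}+u=\frac{1}{p}$ giving the radial acceleration $-C^2u^2/p$ at once, is also sound and meshes with Fact~\ref{fact-17}, though it is not pursued anywhere in the paper.
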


\begin{proof}
We will find the vector of the acceleration by differentiating %
$$ \left\{\begin{array}{l}x = r\cos\theta\\
y = r\sin\theta.
\end{array}\right.
$$ two times as follows: %
$$ \left\{\begin{array}{l}x' = r'\cos\theta -r\theta'\sin\theta\\
y' = r'\sin\theta +r\theta'\cos\theta.
\end{array}\right.
$$\begin{equation}\label{eq-34}
\left\{\begin{array}{l}x'' = r''\cos\theta -2r'\theta'\sin\theta
-r\theta''\sin\theta
-r\theta'^2\cos\theta\\
y'' = r''\sin\theta +2r'\theta'\cos\theta +r\theta''\cos\theta
-r\theta'^2\sin\theta.
\end{array}\right.
\end{equation} %
The derivative of $r$ can be found from~\eqref{eq-32} as $$ r'=
\frac{-p\, \varepsilon\sin\theta}{(1-\varepsilon\cos\theta)^2}\,
\theta'.
$$ Notice, that if follows from~\eqref{eq-32} that
\begin{equation}\label{eq-33}
\theta' = \frac{C}{r^2}.
\end{equation} Thus, we can simplify the derivative $r'$ as
$$ r'
=\frac{-C\varepsilon\sin\theta}{p}.
$$ Next, $$
r'' =\frac{-C\varepsilon\cos\theta}{p}\, \theta' $$
and~\eqref{eq-33} implies
$$r''= \frac{-C^2\varepsilon\cos\theta(1 -
\varepsilon\cos\theta)^2}{p^3}.
$$ Again by~\eqref{eq-33} find $\theta''$ as $$
\theta'' = \frac{2\, C\varepsilon\sin\theta}{p^2}\,
(1-\varepsilon\cos \theta)\theta',
$$ whence $$
\theta'' =\frac{2\, C^2\varepsilon\sin\theta}{p^4}\,
(1-\varepsilon\cos \theta)^3.
$$ Using expressions for $r'$, $r''$, $\theta'$ and %
$\theta''$, we can rewrite the first line of~\eqref{eq-34} as $$
x'' = r''\cos\theta -2r'\theta'\sin\theta -r\theta''\sin\theta
-r\theta'^2\cos\theta =$$$$ =\frac{-C^2\varepsilon\cos^2\theta(1 -
\varepsilon\cos\theta)^2}{p^3} +2\cdot
\frac{C\varepsilon\sin\theta}{p}\cdot \frac{C(1-\varepsilon\cos
\theta)^2}{p^2}\, \, \sin\theta -$$
$$-\frac{p}{1-\varepsilon\cos\theta}\cdot \frac{2\, C^2\varepsilon\sin\theta}{p^4}\,
(1-\varepsilon\cos \theta)^3\, \sin\theta
-\frac{p}{1-\varepsilon\cos\theta}\cdot \frac{C^2}{p^4}\,
(1-\varepsilon\cos \theta)^4\, \cos\theta =$$
$$
=\frac{-C^2(1-\varepsilon\cos \theta)^2}{p^3}\, \left(\varepsilon
\cos^2\theta -2\varepsilon \sin^2\theta +2\varepsilon \sin^2\theta
+(1-\varepsilon \cos\theta)\cos\theta\right)=
$$$$
=\frac{-C^2(1-\varepsilon\cos \theta)^2}{p^3}\, \cos\theta\, .
$$
Analogously, the second line of~\eqref{eq-34} can be simplified as
$$
y'' =r''\sin\theta +2r'\theta'\cos\theta +r\theta''\cos\theta
-r\theta'^2\sin\theta =
$$$$ = \frac{-C^2\varepsilon\cos\theta(1 -
\varepsilon\cos\theta)^2}{p^3}\, \sin\theta +2\cdot
\frac{-C\varepsilon\sin\theta}{p}\cdot \frac{C}{p^2}\,
(1-\varepsilon\cos \theta)^2\, \cos\theta +$$
$$+\frac{p}{1-\varepsilon\cos\theta}\cdot \frac{2\, C^2\varepsilon\sin\theta}{p^4}\,
(1-\varepsilon\cos \theta)^3\, \cos\theta
-\frac{p}{1-\varepsilon\cos\theta}\cdot \frac{C^2}{p^4}\,
(1-\varepsilon\cos \theta)^4\, \sin\theta =$$
$$=\frac{-C^2(1-\varepsilon\cos
\theta)^2\sin\theta}{p^3}\left(\varepsilon\cos\theta
+2\varepsilon\cos\theta -2\varepsilon\cos\theta +
(1-\varepsilon\cos\theta)\right)=
$$$$
=\frac{-C^2(1-\varepsilon\cos \theta)^2\sin\theta}{p^3}.
$$ Thus, we obtain formulas for $a$
as
$$\left\{\begin{array}{l}x'' = \frac{-C^2\, \cos\theta}{p\, r^2},\\
y'' = \frac{-C^2\, \sin\theta}{p\, r^2}.
\end{array}\right.
$$
\end{proof}

\subsection{Analytical representation of the movement}\label{sect-4}

We will prove

\begin{fact*}[Fact~\ref{fact-14}]
Suppose that $\varepsilon\in (0,\, 1)$. Then $$\int \frac{\Dif
x}{(1-\varepsilon \cos x)^2} = 
\frac{\sqrt{1-\varepsilon}}{\sqrt{1+\varepsilon}}\cdot
\frac{2}{(1-\varepsilon^2)(1-\varepsilon)} \left( \arctan t
 + \frac{t\, \varepsilon}{t^2+1} \right),
$$
where $t = \tan\left( \frac{x}{2}\cdot
\frac{\sqrt{1+\varepsilon}}{\sqrt{1-\varepsilon}}\right).$
\end{fact*}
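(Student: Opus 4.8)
The plan is to evaluate this integral by the tangent half-angle (Weierstrass) substitution, which turns the trigonometric integrand into a rational one that can be handled by elementary means. First I would set $u = \tan\frac{x}{2}$, so that $\cos x = \frac{1-u^2}{1+u^2}$ and $\Dif x = \frac{2\,\Dif u}{1+u^2}$. Writing $A = 1-\varepsilon$ and $B = 1+\varepsilon$ for brevity, the denominator becomes $1-\varepsilon\cos x = \frac{A+Bu^2}{1+u^2}$, and the whole integral collapses to the rational integral
\[
\int \frac{\Dif x}{(1-\varepsilon\cos x)^2} = 2\int \frac{(1+u^2)\,\Dif u}{(A+Bu^2)^2}.
\]

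Next I would split the numerator so as to separate the two powers of the denominator. Since $B(1+u^2) = (A+Bu^2) + (B-A)$ and $B-A = 2\varepsilon$, one has $1+u^2 = \frac{1}{B}(A+Bu^2) + \frac{2\varepsilon}{B}$, which reduces the problem to the two standard integrals $\int\frac{\Dif u}{A+Bu^2}$ and $\int\frac{\Dif u}{(A+Bu^2)^2}$. The first is $\frac{1}{\sqrt{AB}}\arctan\!\bigl(u\sqrt{B/A}\bigr)$, and for the second I would invoke the classical reduction formula $\int\frac{\Dif u}{(A+Bu^2)^2} = \frac{u}{2A(A+Bu^2)} + \frac{1}{2A}\int\frac{\Dif u}{A+Bu^2}$.

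Finally I would assemble these pieces and simplify the constants. The crucial algebraic collapses are $AB = 1-\varepsilon^2$ and $A+\varepsilon = 1$ (equivalently $\frac{1}{B}+\frac{\varepsilon}{AB}=\frac{1}{AB}$), which make the coefficient of the arctangent term reduce to $\frac{2}{(1-\varepsilon^2)^{3/2}}$. Passing to the variable $t = \sqrt{B/A}\,u = \sqrt{\tfrac{1+\varepsilon}{1-\varepsilon}}\,\tan\frac{x}{2}$ — which is the $t$ appearing in the statement — one has $A+Bu^2 = A(1+t^2)$ and $\frac{u}{A+Bu^2} = \frac{t}{\sqrt{AB}(1+t^2)}$, so the algebraic term becomes $\frac{\varepsilon t}{(1-\varepsilon^2)^{3/2}(1+t^2)}$ and
\[
\int \frac{\Dif x}{(1-\varepsilon\cos x)^2} = \frac{2}{(1-\varepsilon^2)^{3/2}}\left(\arctan t + \frac{\varepsilon t}{1+t^2}\right).
\]
It then remains only to verify that the displayed prefactor $\frac{\sqrt{1-\varepsilon}}{\sqrt{1+\varepsilon}}\cdot\frac{2}{(1-\varepsilon^2)(1-\varepsilon)}$ equals $\frac{2}{(1-\varepsilon^2)^{3/2}}$, which is immediate after writing $(1-\varepsilon^2)(1-\varepsilon) = (1-\varepsilon)^2(1+\varepsilon)$.

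The main obstacle is not conceptual but bookkeeping: keeping the constants $A$, $B$ and $\sqrt{AB}$ straight through the reduction formula and the final change back to $t$, and in particular spotting the two collapses that render the arctangent coefficient in the clean form above. I would also flag that the $t$ must be read with the factor $\sqrt{(1+\varepsilon)/(1-\varepsilon)}$ multiplying $\tan\frac{x}{2}$ from outside, rather than scaling the argument of the tangent, since that is precisely the quantity the Weierstrass substitution produces.
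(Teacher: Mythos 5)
Your proof is correct and follows essentially the same route as the paper: the tangent half-angle substitution $u=\tan\frac{x}{2}$ reducing the problem to the rational integral $2\int\frac{(1+u^2)\,\Dif u}{\left((1-\varepsilon)+(1+\varepsilon)u^2\right)^2}$, which the paper handles by rescaling to $\zeta=\sqrt{(1+\varepsilon)/(1-\varepsilon)}\;u$ and invoking its two table integrals $\int\frac{\Dif\zeta}{(\zeta^2+1)^2}$ and $\int\frac{\zeta^2\,\Dif\zeta}{(\zeta^2+1)^2}$ (Facts~\ref{fact-06} and~\ref{fact-07}), while you split the numerator and use the standard reduction formula for $\int\frac{\Dif u}{(A+Bu^2)^2}$ --- an immaterial bookkeeping difference. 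Your closing flag is also well taken: the substitution genuinely produces $t=\sqrt{(1+\varepsilon)/(1-\varepsilon)}\,\tan\frac{x}{2}$, with the factor multiplying the tangent from outside, so the expression $t=\tan\bigl(\frac{x}{2}\cdot\sqrt{(1+\varepsilon)/(1-\varepsilon)}\bigr)$ appearing in the statement (and in the paper's own final line, where $\psi=\tan\frac{\theta}{2}$ and $\zeta=\psi\sqrt{(1+\varepsilon)/(1-\varepsilon)}$ are incorrectly recombined into a tangent of a scaled angle) is a slip that your reading corrects.
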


We will need two additional facts for the proof of
Fact~\ref{fact-14}:

\begin{fact}\label{fact-06}
$$\int \frac{\Dif x}{\left(
x^2+1\right)^2} =\frac{\arctan x}{2} + \frac{x}{2x^2+2} +C$$
\end{fact}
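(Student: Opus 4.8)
The plan is to prove this by verifying that the claimed right-hand side is genuinely an antiderivative of the integrand; that is, I would differentiate $F(x) = \frac{\arctan x}{2} + \frac{x}{2x^2+2}$ and check that $F'(x) = \frac{1}{(x^2+1)^2}$. Since an indefinite integral is by definition any function whose derivative is the integrand, and the $+C$ already accounts for the ambiguity described by the Newton--Leibnitz Theorem, this verification is all that the statement requires.

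First I would differentiate the two summands separately. For the first, using $\arctan' x = \frac{1}{x^2+1}$, I get $\left(\frac{\arctan x}{2}\right)' = \frac{1}{2(x^2+1)}$. For the second, I would rewrite $\frac{x}{2x^2+2} = \frac{1}{2}\,x\,(x^2+1)^{-1}$ and apply the product rule (Fact~\ref{fact-11}) together with the chain rule (Fact~\ref{fact-10}) and Fact~\ref{fact-05}; this yields $\frac{1}{2}\left[\frac{1}{x^2+1} - \frac{2x^2}{(x^2+1)^2}\right] = \frac{1-x^2}{2(x^2+1)^2}$. Adding the two derivatives over the common denominator $2(x^2+1)^2$ gives $\frac{(x^2+1)+(1-x^2)}{2(x^2+1)^2} = \frac{2}{2(x^2+1)^2} = \frac{1}{(x^2+1)^2}$, which is exactly the integrand, completing the verification.

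The only step that is not purely mechanical is the use of $\arctan' x = \frac{1}{x^2+1}$, which does not appear in the table of derivatives (Fact~\ref{fact-12}), so I expect this to be the main point that still needs justification. I would supply it from the inverse-function viewpoint: writing $y = \arctan x$ so that $x = \tan y$, differentiating both sides with respect to $x$ gives $1 = \frac{y'}{\cos^2 y}$ (here $(\tan y)' = \frac{1}{\cos^2 y}$ follows from Fact~\ref{fact-12} and the quotient rule, itself a consequence of the product and chain rules), hence $y' = \cos^2 y = \frac{1}{1+\tan^2 y} = \frac{1}{1+x^2}$.

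As an alternative I could avoid verification entirely and derive the formula constructively via the substitution $x = \tan\theta$, which turns the integral into $\int \cos^2\theta\,\Dif\theta = \frac{\theta}{2} + \frac{\sin 2\theta}{4} + C$, and then rewrite $\theta = \arctan x$ and $\frac{\sin 2\theta}{4} = \frac{x}{2(x^2+1)}$ to recover the stated expression. I would nonetheless present the verification route as the primary argument, since it is shorter and relies only on differentiation facts already collected above.
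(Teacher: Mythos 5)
Your proposal is correct and is essentially the paper's own proof run in reverse: the paper differentiates $\frac{x}{x^2+1}$, obtains $\frac{-1}{x^2+1}+\frac{2}{(x^2+1)^2}$, and rearranges after integrating (taking $\int\frac{\Dif x}{x^2+1}=\arctan x$ for granted), while you differentiate the full claimed antiderivative --- the identical product-rule computation resting on the same two ingredients. Your explicit justification of $\arctan' x=\frac{1}{1+x^2}$ fills a step the paper leaves implicit (whose displayed rearrangement in fact contains a typo, writing $2\int\frac{1}{x^2+1}\,\Dif x$ where $2\int\frac{\Dif x}{(x^2+1)^2}$ is meant).
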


\begin{proof}
Differentiate the expression $\frac{x}{x^2+1}$ and obtain $$
\frac{\Dif}{\Dif x}\left( \frac{x}{x^2+1} \right) =
\frac{1}{x^2+1} - \frac{2x^2}{(x^2+1)^2} = \frac{1}{x^2+1} -
2\left (\frac{1}{x^2+1} - \frac{1}{(x^2+1)^2}\right) =
$$$$
=\frac{-1}{x^2+1} +\frac{2}{(x^2+1)^2}.
$$

Thus, taking integrals from the former and the last part of this
equality obtain
$$
\frac{x}{x^2+1} = - \int \frac{1}{x^2+1}\Dif x + 2\int
\frac{1}{x^2+1}\Dif x,
$$ whence $$
\int \frac{1}{x^2+1}\Dif x = \frac{1}{2}\left( \arctan x
+\frac{x}{x^2+1}\right).
$$
\end{proof}

\begin{fact}\label{fact-07}
$$\int \frac{x^2\Dif x}{\left(
x^2+1\right)^2} =\frac{\arctan x}{2} - \frac{x}{2x^2+2} +C$$
\end{fact}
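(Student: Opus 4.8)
The plan is to reduce this integral to the already-established Fact~\ref{fact-06} together with the elementary integral $\int \frac{\Dif x}{x^2+1} = \arctan x$. The key observation is a purely algebraic one: in the numerator I would write $x^2 = (x^2+1) - 1$, so that
$$
\frac{x^2}{(x^2+1)^2} = \frac{(x^2+1)-1}{(x^2+1)^2} = \frac{1}{x^2+1} - \frac{1}{(x^2+1)^2}.
$$
This splits the problematic integrand into one piece whose antiderivative is standard and one piece that is exactly the integrand of Fact~\ref{fact-06}.

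Integrating both sides term by term, I would get
$$
\int \frac{x^2\,\Dif x}{(x^2+1)^2} = \int \frac{\Dif x}{x^2+1} - \int \frac{\Dif x}{(x^2+1)^2}.
$$
The first integral on the right is $\arctan x$. For the second, I would substitute the value supplied by Fact~\ref{fact-06}, namely $\int \frac{\Dif x}{(x^2+1)^2} = \frac{\arctan x}{2} + \frac{x}{2x^2+2}$. Collecting terms then gives
$$
\int \frac{x^2\,\Dif x}{(x^2+1)^2} = \arctan x - \frac{\arctan x}{2} - \frac{x}{2x^2+2} = \frac{\arctan x}{2} - \frac{x}{2x^2+2},
$$
and restoring the arbitrary constant of integration $C$ yields precisely the claimed formula.

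There is no real obstacle here: the entire content of the proof is the decomposition $x^2 = (x^2+1)-1$, after which everything is a one-line substitution from the preceding fact. If I wanted an independent check not relying on Fact~\ref{fact-06}, I could instead verify the claim directly by differentiating the proposed right-hand side $\frac{\arctan x}{2} - \frac{x}{2x^2+2}$ and confirming that its derivative simplifies to $\frac{x^2}{(x^2+1)^2}$; this sidesteps any concern about the derivation given for Fact~\ref{fact-06} and is the more robust route if rigor is at a premium.
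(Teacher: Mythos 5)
Your proposal is correct and is essentially identical to the paper's own proof: both use the decomposition $x^2 = (x^2+1)-1$ to split the integrand as $\frac{1}{x^2+1} - \frac{1}{(x^2+1)^2}$, integrate the first term to $\arctan x$, and invoke Fact~\ref{fact-06} for the second. Your closing remark about verifying the result by direct differentiation is a sensible extra check but does not change the substance of the argument.
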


\begin{proof}
$$\int \frac{x^2\Dif x}{\left( x^2+1\right)^2} = \int \left( \frac{1}{x^2+1} -\frac{1}{\left( x^2+1\right)^2}\right)\Dif x.$$
Applying Fact~\ref{fact-06}, obtain $$\int \frac{x^2\Dif x}{\left(
x^2+1\right)^2} = \arctan x -\frac{1}{2}\left( \arctan x
+\frac{x}{x^2+1}\right)
$$ obtain the necessary.
\end{proof}

Now we are ready to prove Fact~\ref{fact-14}.

\begin{proof}[Proof of Fact~\ref{fact-14}]

Denote $\Theta = \frac{1}{(1-\varepsilon\cos \theta)^2},$ $J =
\frac{\Dif \theta}{(1-\varepsilon\cos \theta)^2} $, and $I = \int
J.$

Denote $$ \psi = \tan\frac{\theta}{2}.
$$ Then $$
\cos\theta = \frac{1-\psi^2}{1+\psi^2}
$$ and $$
d\theta = \frac{2d\psi}{1+\psi^2}
$$ Thus, $$
\frac{\Dif \theta}{(1-\varepsilon\cos \theta)^2} = \frac{1}{\left(
1-\varepsilon\left(\frac{1-\psi^2}{1+\psi^2}\right)
\right)^2}\cdot \frac{2d\psi}{1+\psi^2}=
$$$$
= \frac{2(1+\psi^2)d\psi}{\left( (1+\psi^2) -
\varepsilon(1-\psi^2)\right)^2} = \frac{2(1+\psi^2)d\psi}{\left(
\psi^2(1+\varepsilon) + (1-\varepsilon)\right)^2}.
$$

We should notice, that the obtained expression is ``similar'' to
examples, which are given in Facts~\ref{fact-06}
and~\ref{fact-07}, but we need some additional transformations for
the direct use of these facts.

$$
\frac{2(1+\psi^2)d\psi}{\left( \psi^2(1+\varepsilon) +
(1-\varepsilon)\right)^2} = \frac{1}{(1-\varepsilon)^2}\cdot
\frac{2(1+\psi^2)d\psi}{\left(
\left(\frac{\psi\sqrt{1+\varepsilon}}{\sqrt{1-\varepsilon}}
\right)^2 + 1\right)^2} =
$$$$
= \frac{1}{(1-\varepsilon)^2}\cdot
\frac{2\left(\frac{1+\varepsilon}{1-\varepsilon}
+\left(\frac{\psi\sqrt{1+\varepsilon}}{\sqrt{1-\varepsilon}}
\right)^2\right)d\psi}{\left(
\left(\frac{\psi\sqrt{1+\varepsilon}}{\sqrt{1-\varepsilon}}
\right)^2 + 1\right)^2}\cdot \frac{1-\varepsilon}{1+\varepsilon} =
$$$$
= \frac{1}{1-\varepsilon^2}\cdot
\frac{2\left(\frac{1+\varepsilon}{1-\varepsilon}
+\left(\frac{\psi\sqrt{1+\varepsilon}}{\sqrt{1-\varepsilon}}
\right)^2\right)d\psi}{\left(
\left(\frac{\psi\sqrt{1+\varepsilon}}{\sqrt{1-\varepsilon}}
\right)^2 + 1\right)^2}.
$$ Denote $\zeta =
\frac{\psi\sqrt{1+\varepsilon}}{\sqrt{1-\varepsilon}}$. Then
$d\psi = \frac{\sqrt{1-\varepsilon}}{\sqrt{1+\varepsilon}}\,
d\zeta$, whence $$ I = \frac{1}{1-\varepsilon^2}\cdot
\frac{2\left(\frac{1+\varepsilon}{1-\varepsilon}
+\zeta^2\right)}{\left( \zeta^2 + 1\right)^2}\cdot
\frac{\sqrt{1-\varepsilon}}{\sqrt{1+\varepsilon}}\, d\zeta =
$$
$$= \frac{2\sqrt{1-\varepsilon}}{\sqrt{1+\varepsilon}}\cdot \frac{1}{1-\varepsilon^2}
\left( \frac{1+\varepsilon}{1-\varepsilon}\cdot
\frac{d\zeta}{\left( \zeta^2 + 1\right)^2} + \frac{ \zeta^2\,
d\zeta}{\left( \zeta^2 + 1\right)^2}\, \right).
$$ Using Facts~\ref{fact-06}
and~\ref{fact-07} simplify $$ \int I
=\frac{2\sqrt{1-\varepsilon}}{\sqrt{1+\varepsilon}}\cdot
\frac{1}{1-\varepsilon^2} \left(
\frac{1+\varepsilon}{1-\varepsilon}\cdot \left(\frac{\arctan
\zeta}{2} + \frac{\zeta}{2\zeta^2+2}\right) + \left(\frac{\arctan
\zeta}{2} - \frac{\zeta}{2\zeta^2+2}\right) \right)=
$$$$
= \frac{\sqrt{1-\varepsilon}}{\sqrt{1+\varepsilon}}\cdot
\frac{1}{1-\varepsilon^2} \left(
\frac{1+\varepsilon}{1-\varepsilon}\cdot \left(\arctan \zeta +
\frac{\zeta}{\zeta^2+1}\right) + \left(\arctan \zeta -
\frac{\zeta}{\zeta^2+1}\right) \right)=
$$$$
= \frac{\sqrt{1-\varepsilon}}{\sqrt{1+\varepsilon}}\cdot
\frac{1}{1-\varepsilon^2} \left( \arctan
\zeta\left(\frac{1+\varepsilon}{1-\varepsilon}+1\right) +
\frac{\zeta}{\zeta^2+1}\left(\frac{1+\varepsilon}{1-\varepsilon}-1
\right)\right)=
$$$$
= \frac{\sqrt{1-\varepsilon}}{\sqrt{1+\varepsilon}}\cdot
\frac{1}{1-\varepsilon^2} \left( \arctan \zeta \cdot
\frac{2}{1-\varepsilon} + \frac{\zeta}{\zeta^2+1}\cdot
\frac{2\varepsilon}{1-\varepsilon} \right) =
$$$$
= \frac{\sqrt{1-\varepsilon}}{\sqrt{1+\varepsilon}}\cdot
\frac{2}{(1-\varepsilon^2)(1-\varepsilon)} \left( \arctan \zeta
 + \frac{\zeta\, \varepsilon}{\zeta^2+1} \right).
$$

Notice, that since $\psi = \tan \frac{\theta}{2}$ and $\zeta =
\frac{\psi\sqrt{1+\varepsilon}}{\sqrt{1-\varepsilon}}$, then
$\zeta = \tan\left( \frac{\theta}{2}\cdot
\frac{\sqrt{1+\varepsilon}}{\sqrt{1-\varepsilon}}\right).$
\end{proof}

\subsection{Physical remarks about Kepler's laws}\label{sect-08}

\begin{fact*}[Fact~\ref{fact-24}]
The projection of the acceleration of the point, whose movement is
described by vector-equation $s(t)$, to the normal to the velocity
at a point $t_0$ equals $$ |a_\nu| = \frac{v^2}{R},
$$ where $v = s'(t_0)$ and $R$ is from Fact~\ref{fact-23}.
\end{fact*}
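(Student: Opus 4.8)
The plan is to express the normal component of the acceleration in terms of the quantity $\mathcal{P}_3[s'(t_0),\, s''(t_0)]$ and the speed $|s'(t_0)|$, and then to check that $\frac{v^2}{R}$, with $R$ supplied by Fact~\ref{fact-23}, reduces to exactly the same expression. Write $\overrightarrow{v} = s'(t_0)$ for the velocity and $\overrightarrow{a} = s''(t_0)$ for the acceleration. First I would decompose $\overrightarrow{a}$ into the part along $\overrightarrow{v}$ and the part orthogonal to it; if $\phi$ denotes the angle between $\overrightarrow{a}$ and $\overrightarrow{v}$, then the orthogonal (normal) component has magnitude $|a_\nu| = |\overrightarrow{a}|\sin\phi$, since the component along $\overrightarrow{v}$ contributes the cosine and the normal component the sine.

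The key step is to read off $|\overrightarrow{a}|\sin\phi$ from Fact~\ref{fact-18}. That fact identifies $|\mathcal{P}_3[\overrightarrow{v},\, \overrightarrow{a}]|$ with the area of the parallelogram spanned by $\overrightarrow{v}$ and $\overrightarrow{a}$, so that
$$
|\mathcal{P}_3[\overrightarrow{v},\, \overrightarrow{a}]| = |\overrightarrow{v}|\,|\overrightarrow{a}|\sin\phi .
$$
Dividing by $|\overrightarrow{v}|$ yields immediately
$$
|a_\nu| = |\overrightarrow{a}|\sin\phi = \frac{|\mathcal{P}_3[s'(t_0),\, s''(t_0)]|}{|s'(t_0)|}.
$$

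It then remains to verify that $\frac{v^2}{R}$ equals the same thing. Substituting $v = |s'(t_0)|$ and $R = \frac{|s'(t_0)|^3}{|\mathcal{P}_3[s'(t_0),\, s''(t_0)]|}$ from Fact~\ref{fact-23} gives
$$
\frac{v^2}{R} = \frac{|s'(t_0)|^2\,|\mathcal{P}_3[s'(t_0),\, s''(t_0)]|}{|s'(t_0)|^3} = \frac{|\mathcal{P}_3[s'(t_0),\, s''(t_0)]|}{|s'(t_0)|},
$$
which coincides with the normal component computed above, completing the proof.

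The computation is short, so the only delicate points are conceptual rather than technical. The main obstacle will be justifying cleanly that the normal component of $\overrightarrow{a}$ really is $|\overrightarrow{a}|\sin\phi$, and making the bridge from that trigonometric quantity to $\mathcal{P}_3$ via the parallelogram-area interpretation of Fact~\ref{fact-18} (taking absolute values, since we want a magnitude rather than a signed area). Once this identification is in place, the statement is just an algebraic rearrangement of the formula for $R$.
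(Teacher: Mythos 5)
Your proof is correct and follows essentially the same route as the paper's own: both arguments identify the normal component of the acceleration as $\frac{|\mathcal{P}_3[s'(t_0),\, s''(t_0)]|}{|s'(t_0)|}$ via the parallelogram-area interpretation of Fact~\ref{fact-18} (the paper writes your $|\overrightarrow{a}|\sin\phi$ as $|\overrightarrow{a}|\cos\alpha$, measuring the complementary angle against the normal direction $\nu$), and then match this with $\frac{v^2}{R}$ by substituting the formula for $R$ from Fact~\ref{fact-23}. There is no gap; the only cosmetic difference is which of the two complementary angles is named.
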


\begin{proof}
Remind that $R$ here is the radius of a circle, such that we can
``locally'' consider our curve as a circle with this radius. Thus,
``from physics'' obtain that $\overrightarrow{a}_c$ with $$a_c  =
\frac{(v(t_0))^2}{R}$$ is centripetal acceleration, which should
appear, where $v(t_0) = s'(t_0)$ is the velocity of a point, which
can be found as the derivative.

From another hand, it is clear that the tangent to our curve (to
the trajectory of the planet), which is parallel to the velocity
$s'(t_0)$ should also be the tangent to the circle, which
``locally'' is equaled to the curve. This means that the center of
this circle should belong to the perpendicular to the tangent at
$t_0$. Denote $\nu$ the direction vector of this perpendicular,
i.e. $\nu \cdot s'(t_0) =0$.

Find the projection $a_\nu$ of the acceleration
$\overrightarrow{a}$ of our point on $\nu$. If $\alpha$ is the
angle between $a$ and $\nu$, then it is clear that $$a_\nu = a\,
\cos \alpha.$$

Since $\nu$ is perpendicular to $v$, then $\beta =90^0-\alpha$ is
the angle between $a$ and $v$. Find the area of the parallelogram,
whose sides are $a$ and $v$. From one hand it is $|a|\cdot |v|\,
\sin\beta$. From another hand, it follows from Fact~\ref{fact-18}
that this area is $\mathcal{P}_3[\overrightarrow{a},\,
\overrightarrow{v}]|$. Thus, $$|a|\cdot |v|\, \sin\beta =
|\mathcal{P}_3[\overrightarrow{a},\, \overrightarrow{v}]|,$$
whence $$ \sin\beta = \cos\alpha =
\frac{|\mathcal{P}_3[\overrightarrow{a},\,
\overrightarrow{v}]|}{|s''(t_0)|\cdot |s'(t_0)|}
$$ and $$
|a_v| = |a|\, \cos\alpha =
\frac{|\mathcal{P}_3[\overrightarrow{a},\,
\overrightarrow{v}]|}{|s'(t_0)|}
$$
Notice, that this expression of exactly the same as $$
\frac{v^2}{R},
$$ where $v = s'(t_0)$ and $R$ is found above.
\end{proof}

\subsubsection{About vector product}\label{sect-09}

We have already mentioned in Fact~\ref{fact-18} that if we have
two vectors $a$ and $b$ of the plane XOY, then $$ A
=|\mathcal{P}_3[a,\, b]|
$$ is the area of the parallelogram, whose sides are vectors $a$ and
$b$. In fact, the projection $\mathcal{P}_3$ and the fact that
$a,$ and $b$ belong to the plane XOY is not necessary in
Fact~\ref{fact-18}. The more general fact is true.

\begin{fact}\label{fact-25}
For any $3$-dimensional vectors $a$ and $b$ the area of the
parallelogram with sides $a$ and $b$ equals $$ A = |[a,\, b]|,
$$ where $[a,\, b]$ is the vector product of $a$ and $b$.
\end{fact}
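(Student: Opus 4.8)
The plan is to imitate the proof of Fact~\ref{fact-18}, replacing the single scalar $\mathcal{P}_3[\overrightarrow{u},\, \overrightarrow{v}]$ by the three coordinates of the vector product. Write $\overrightarrow{a} = (a_1,\, a_2,\, a_3)$ and $\overrightarrow{b} = (b_1,\, b_2,\, b_3)$, and recall that by definition the vector product $[\overrightarrow{a},\, \overrightarrow{b}]$ is the vector whose three coordinates are the $2\times 2$ expressions $a_2b_3 - a_3b_2$, $a_3b_1 - a_1b_3$ and $a_1b_2 - a_2b_1$. First I would reproduce the two formulas already used in Fact~\ref{fact-18}: the dot-product identity $\overrightarrow{a}\cdot\overrightarrow{b} = ||a||\,||b||\,\cos\theta$ and the parallelogram-area formula $A = ||a||\,||b||\,\sin\theta$, where $\theta$ is the angle between the two vectors. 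Eliminating $\theta$ exactly as there gives
\[
A^2 = ||a||^2\,||b||^2\,(1-\cos^2\theta) = ||a||^2\,||b||^2 - (\overrightarrow{a}\cdot\overrightarrow{b})^2 .
\]

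The core of the proof is then the purely algebraic identity
\[
||a||^2\,||b||^2 - (\overrightarrow{a}\cdot\overrightarrow{b})^2 = (a_2b_3 - a_3b_2)^2 + (a_3b_1 - a_1b_3)^2 + (a_1b_2 - a_2b_1)^2 ,
\]
whose right-hand side is exactly $|[\overrightarrow{a},\, \overrightarrow{b}]|^2$. I would establish this by expanding both sides into monomials in $a_i b_j$. On the left, $||a||^2\,||b||^2 = \sum_{i,j} a_i^2 b_j^2$ and $(\overrightarrow{a}\cdot\overrightarrow{b})^2 = \sum_i a_i^2 b_i^2 + 2\sum_{i<j} a_i b_i a_j b_j$, so after the ``diagonal'' terms $a_i^2 b_i^2$ cancel, the left side reduces to $\sum_{i\neq j} a_i^2 b_j^2 - 2\sum_{i<j} a_i a_j b_i b_j$. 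On the right, each squared binomial contributes two pure squares $a_i^2 b_j^2$ and one cross term $-2a_i b_j a_j b_i$; collecting these reproduces precisely the same $\sum_{i\neq j} a_i^2 b_j^2$ together with $-2\sum_{i<j} a_i a_j b_i b_j$, since the monomials $a_i b_j a_j b_i$ and $a_i b_i a_j b_j$ coincide. This is the three-term analogue of the one-line computation $A^2 = (x_ay_b - x_by_a)^2$ carried out in Fact~\ref{fact-18}, and taking square roots finishes the argument.

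The main obstacle is purely bookkeeping: the Lagrange-type identity above has many monomials, and the only real point to verify is that every product $a_i a_j b_i b_j$ with $i\neq j$ appears with the correct sign and multiplicity so that the cross terms cancel against $-(\overrightarrow{a}\cdot\overrightarrow{b})^2$. An alternative route would avoid this expansion entirely: since $\overrightarrow{a}$ and $\overrightarrow{b}$ span a plane, one may choose Cartesian coordinates in which that plane is $XOY$; then two coordinates of each vector vanish, the vector product collapses to its third coordinate $\mathcal{P}_3[\overrightarrow{a},\, \overrightarrow{b}]$, and Fact~\ref{fact-18} applies verbatim. The difficulty there is merely shifted to justifying that $|[\overrightarrow{a},\, \overrightarrow{b}]|$ is unchanged under such a rotation of axes, which in this elementary setting is most easily seen from the very same algebraic identity; so I would prefer the direct expansion.
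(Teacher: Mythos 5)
Your proposal is correct and takes essentially the same route as the paper: the paper likewise combines the area formula $A = ||a||\,||b||\,\sin\alpha$ with the Lagrange identity $||a||^2\,||b||^2 - (a\cdot b)^2 = |[a,\,b]|^2$, which it also verifies by direct calculation. The only cosmetic difference is that you take the coordinate formula for $[a,\,b]$ as the definition, whereas the paper first derives those coordinates from its axioms ($[i,\,j]=k$, antisymmetry, bilinearity) before performing the same expansion.
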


Clearly, we will explain now the definition of vector product and
will prove Fact~\ref{fact-25}.

Consider the $3$-dimensional space, and denote $$ i = (1,\, 0,\,
0),
$$$$
j = (0,\, 1,\, 0)
$$ and $$
k= (0,\, 0,\, 1).
$$
Define the vector product as follows:

1. $[i,\, j] = k$, $[j,\, k] = i$ and $[k,\, i] = j$.

2. $[a,\, b] = -[b,\, a]$ for all $a,\, b$.

3. $[\alpha a + \beta b,\, c] = \alpha [a,\, c] + \beta [b,\, c]$.

Now we are ready to prove Fact~\ref{fact-25}.

\begin{proof}[Proof of Fact~\ref{fact-25}]
Suppose that $a = (x_1,\, y_1,\, z_1)$ and $b = (x_2,\, y_2,\,
z_2)$. In other words $a = x_1i + y_1j + z_1k$ and $b =x_2i + y_2j
+ z_2k$. Then $$ [a,\, b] = [x_1i + y_1j + z_1k,\, x_2i + y_2j +
z_2k] =
$$$$
=x_1y_2k + x_1z_2(-j) + y_1x_2(-k) +y_1z_2i + z_1x_2j +z_1y_2(-i)
=
$$$$= (y_1z_2 -z_1y_2)i + (z_1x_2 -x_1z_2)j + (x_1y_2-y_1x_2)k.$$

Clearly, $$ a^2 = x_1^2 +y_1^2 + z_1^2,
$$ $$
b^2 = x_2^2 +y_2^2 + z_2^2
$$ and  $$
a\cdot b = x_1x_2 +y_1y_2 +z_1z_2.
$$ Denote $\alpha$ the angle between $a$ and $b$. %
Then $(a\cdot b)^2 = a^2\, b^2\, \cos^2\alpha$, whence $$ a^2\,
b^2\, \sin^2\alpha = a^2\, b^2 - (a\cdot b)^2.
$$ By direct calculation check that $$
a^2\, b^2 - (a\cdot b)^2 = [a,\, b]^2,
$$ $$
a\cdot [a,\, b] = 0
$$ and $$
b\cdot [a,\, b] = 0
$$ and we are done.
\end{proof}

\newpage
\section{Some information about Solar System}

Eccentricities of planets of Solar systems are given in table
below (see~\cite{Eccentr}).

$$
\begin{array}{ll}
\cline{1-2} \multicolumn{1}{|l|}{\text{Mercury}} & \multicolumn{1}{l|}{0.20563069} \\
\cline{1-2} \multicolumn{1}{|l|}{\text{Venus}}  & \multicolumn{1}{l|}{0.00677323} \\
\cline{1-2} \multicolumn{1}{|l|}{\text{Earth}}  &  \multicolumn{1}{l|}{0.01671022} \\
\cline{1-2} \multicolumn{1}{|l|}{\text{Mars}} & \multicolumn{1}{l|}{0.09341233} \\
\cline{1-2} \multicolumn{1}{|l|}{\text{Jupiter}} & \multicolumn{1}{l|}{0.04839266} \\
\cline{1-2} \multicolumn{1}{|l|}{\text{Saturn}} & \multicolumn{1}{l|}{0.05415060} \\
\cline{1-2} \multicolumn{1}{|l|}{\text{Uranus}} & \multicolumn{1}{l|}{0.04716771} \\
\cline{1-2} \multicolumn{1}{|l|}{\text{Neptune}} & \multicolumn{1}{l|}{0.00858587} \\
\cline{1-2}
\end{array}$$

We can see, that all of them, except Mercury, are less than $0.1$,
whence all the orbits are very close to circles.

\newpage
\pagestyle{empty}

\bibliography{Astron}{}

\begin{thebibliography}{1}

\bibitem{Astr}
{Bojarchenko I.H., Gulak Yu.K., Radzimaha G.S. and Sandakova E.V.}
\newblock {\em Astronomy (in Ukrainian)}.
\newblock Kyiv, 1971.

\bibitem{Eccentr}
Wikipedia.
\newblock List of gravitationally rounded objects of the solar system.
\newblock
  https://en.wikipedia.org/wiki/List\_of\_gravitationally\_rounded\_objects\_o%
f\_the\_Solar\_System.

\end{thebibliography}
\bibliographystyle{plain}

\newpage

\tableofcontents

\end{document}